\documentclass[11pt]{amsart}

\usepackage[a4paper,margin=25mm]{geometry}
\usepackage{amsmath,amssymb,amsthm,mathtools}
\usepackage{tikz}
\usetikzlibrary{positioning}
\usepackage[colorlinks=true,linkcolor=blue,citecolor=blue,urlcolor=blue]{hyperref}

\usepackage[shortlabels]{enumitem} 

\newtheorem{theorem}{Theorem}[subsection]
\newtheorem{proposition}[theorem]{Proposition}
\newtheorem{lemma}[theorem]{Lemma}
\newtheorem{corollary}[theorem]{Corollary}
\newtheorem{definition}[theorem]{Definition}
\newtheorem{remark}[theorem]{Remark}
\newtheorem{example}[theorem]{Example}

\newcommand{\Aut}{\operatorname{Aut}}
\newcommand{\Out}{\operatorname{Out}}
\newcommand{\Inn}{\operatorname{Inn}}
\newcommand{\Stab}{\operatorname{Stab}}

\newcommand{\Conj}{\operatorname{Conj}}
\newcommand{\ad}{\operatorname{ad}}
\newcommand{\Z}{\mathbb Z}

\newcommand{\cG}{\mathcal G}

\newcommand{\ol}[1]{\overline{#1}}

\title[Conjugacy in cyclic extensions]
{The conjugacy problem in cyclic extensions\\
of one-ended hyperbolic groups}
\author{Armando Martino}
\date{\today}

\begin{document}

\begin{abstract}
Let \(G\) be a torsion-free one-ended hyperbolic group and let
\(\phi\in\Aut(G)\). We prove that the mapping torus, or \textit{suspension}, 
\[
M:=G_{\phi}=G\rtimes_\phi\Z
\]
has solvable conjugacy problem. This builds on the pioneering work of Pr\'eaux in \cite{PreauxOrientable} and \cite{PreauxNonorientable}, who solved the conjugacy problem for all (geometrisable) three-manifolds. We view \( M\) as a generalisation of a fibred three-manifold.

Our proof begins with the canonical JSJ tree of \(G\), whose
suspension gives a graph-of-groups decomposition of \( M\). We
refine each suspended QH vertex using a Nielsen--Thurston reduction
system for its induced monodromy, taking account of non-orientable
surfaces and orientation-reversing monodromy. Equivalently, this is a further geometric JSJ decomposition for each vertex which is a fibred three-manifold, but allowing Klein bottles as well as tori in the splitting.

We then `block' pieces of this refined JSJ together according to whether they share a central element, glued along an elementary vertex, by a sequence of folding operations. This new splitting is cocompact and acylindrical; in particular certain local `solution sets' turn out to be rational subsets of virtually abelian subgroups - the virtually abelian groups in question are products of the edge groups. Our version of Pr\'eaux's
graph-of-groups argument then reduces global conjugacy to effective
intersection and non-emptiness for these rational sets.
\end{abstract}

\date{}
\maketitle

\section{Introduction}
\subsection{Statement and outline}

The conjugacy problem in a finitely generated group asks for an algorithm which, given two words, decides whether the elements which
they represent are conjugate. The conjugacy problem is solvable in hyperbolic groups and in fundamental groups of compact
three-manifolds. It is not, however, inherited by arbitrary group extensions, nor does it in general pass from a subgroup of finite
index to the ambient group; see Collins--Miller \cite{CollinsMiller}.

The purpose of this paper is to prove the following.

\begin{theorem}\label{thm:main}
Let \(G\) be a torsion-free one-ended hyperbolic group and let \(\phi\in\Aut(G)\). Then the conjugacy problem in
\[
G_\phi=G\rtimes_\phi\langle t\rangle
\]
is solvable.
\end{theorem}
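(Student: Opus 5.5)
We follow the strategy outlined in the abstract. The first reduction is elementary. Every element of \(M=G_\phi\) has the form \(gt^k\), and the exponent sum \(k\) is a conjugacy invariant. When \(k=0\), two elements \(g,h\in G\) are conjugate in \(M\) if and only if \(h\) is conjugate in \(G\) to \(\phi^n(g)\) for some \(n\in\Z\). This is a twisted-orbit problem in \(G\), which we decide as follows. If \(g\) is not in any \(\phi\)-periodic conjugacy class, then conjugacy lengths along the orbit \(\phi^n(g)\) grow, so a bounded search suffices. If \(g\) is \(\phi\)-periodic up to conjugacy, the period is computable and we reduce to finitely many checks in \(G\). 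Both steps use only the solvability of the conjugacy problem in the hyperbolic group \(G\) and standard growth results for automorphisms of hyperbolic groups.

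The substantive case is \(k\neq 0\). This amounts to deciding twisted conjugacy, \(h=x\,g\,\phi^k(x)^{-1}\), between \(g\) and \(h\) in \(G\) for \(\phi^k\). The plan is to build an effective graph-of-groups decomposition of \(M\) and run a Préaux-style argument on it.

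\begin{enumerate}[(1)]
\item Compute the canonical JSJ tree \(T\) of \(G\); this is effective for one-ended hyperbolic groups. Since \(T\) is \(\Aut(G)\)-invariant, \(\phi\) acts on it, and \(M\) acts on \(T\) with vertex stabilisers equal to suspensions of vertex groups of \(G\), by powers of \(\phi\) that return the vertex to itself.
\item At each QH vertex, the suspension is a fibred surface group. Compute a Nielsen–Thurston reduction system for the induced mapping class, allowing non-orientable fibres and orientation-reversing monodromy. Splitting along it refines \(T\) into pseudo-Anosov pieces, periodic (Seifert-like) pieces, and tori or Klein bottles.
\item Fold together adjacent pieces that share a central element (periodic pieces glued across elementary vertices) to get blocks. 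We then show that the resulting \(M\)-tree is cocompact and acylindrical, with edge groups virtually \(\Z^2\) or cyclic.
\end{enumerate}

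On this tree, an element with \(k\neq 0\) is either elliptic or hyperbolic. We first test ellipticity effectively, by checking whether some power fixes a vertex. In the elliptic case the conjugacy problem is solved locally in each vertex group, by one of three tools:
\begin{itemize}
\item for pseudo-Anosov suspensions, which are hyperbolic relative to their peripheral subgroups, the conjugacy problem in relatively hyperbolic groups;
\item for Seifert-type blocks, central extensions of virtually free or Fuchsian groups by \(\Z\);
\item for elementary vertex groups of the form \(\Z\rtimes\Z\) or \(\Z^2\), which are easy to handle directly.
\end{itemize}
Acylindricity bounds how many vertices an element can fix, so the global check reduces to finitely many local ones. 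In the hyperbolic case we cyclically reduce both elements along their axes and compare their cyclic sequences of edges up to shifting. Conjugacy then reduces to finding elements of edge groups that simultaneously conjugate each local piece.

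The main obstacle is this final matching step. For each vertex piece along the axis we must compute the local solution set: the set of pairs (incoming, outgoing) of edge-group elements that conjugate one local word to the other. We must then decide whether the composite relation around the cycle contains a fixed point. The plan is to prove that each local solution set is a rational subset of the virtually abelian group \(E_1\times E_2\), a product of edge groups. Composition and intersection of rational subsets of virtually abelian groups are effective, since they reduce to semilinear (Presburger-definable) sets, and so is the emptiness test. The proof of rationality is case by case:
\begin{itemize}
\item at pseudo-Anosov vertices, by malnormality of peripherals, the solution set is a coset of a centralizer;
\item at Seifert-type blocks, the blocking by shared central elements ensures the solutions are cosets of products of cyclic central subgroups;
\item at elementary vertices, by direct linear algebra.
\end{itemize}
The Seifert-type blocks are where I expect the most care to be needed, especially the handling of Klein-bottle edges and orientation reversal.
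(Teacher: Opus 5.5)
Your plan for hyperbolic (loxodromic) elements matches the paper's argument: cyclic reduction on the block tree, rational two-coset relations in finite products of virtually $\Z^2$ edge groups, then intersection and an emptiness test. Two other steps, however, fail as written.

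\textbf{The height-zero case.} Your reduction to deciding whether $h\sim_G\phi^n(g)$ for some $n\in\Z$ is correct. But this is the conjugacy-orbit problem for $\phi$, and it is not elementary: for free groups it is a theorem of Brinkmann, whose proof uses relative train tracks. That $\|\phi^n(g)\|\to\infty$ for a non-periodic class follows only from the finiteness of the set of classes of bounded length. It gives no computable $N$ with $\|\phi^n(g)\|>\|h\|$ for $|n|>N$, since lengths may dip before growing. Nor can you decide which branch you are in, because a search for a period never halts when $[g]$ is not periodic. The repair is to drop the case split entirely. The Bass--Serre conjugacy criterion is blind to height, and the paper handles elements of $G$ exactly like any others, as elliptic or hyperbolic elements of $T_{\mathrm{blk}}$.

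\textbf{The elliptic case.} Acylindricity bounds the diameter of a fixed subtree, not the number of fixed vertices: the central generator $z_v$ of a finite-outer vertex group fixes every one of the infinitely many edges at $v$. Conjugacy inside vertex groups also does not suffice. Elliptic elements of different vertex groups may be conjugate only through a chain $u\sim c_1^-$, $c_i^+\sim c_{i+1}^-$, $c_p^+\sim u'$, whose edge labels $c_i$ range over infinite sets. What you need are effective rational descriptions of the boundary-parallelism sets $\mathcal C_E(u)$ and the two-boundary relations $\mathcal P_{E,E'}(V)$. For each of the finitely many edge-path types of length at most four, you intersect these in $\prod_i G_{e_i}$; this is the same rational calculus you already invoke for the loxodromic case. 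Ellipticity itself is decided by cyclic reduction (membership in edge groups), not by searching over powers.

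\textbf{Rigid vertices and acylindricity.} Your list of pieces omits the rigid JSJ vertices. Their suspensions are finite-outer, by the Guirardel--Levitt finiteness of the induced outer action, and they get folded into blocks. So the central quotient of a block is an arbitrary hyperbolic group, not a virtually free or Fuchsian one. The right local tool is the height lemma together with algorithms for quasiconvex virtually cyclic subgroups of hyperbolic groups. Here the height lemma says that conjugacy, or a two-coset equation, holds in $V$ if and only if it holds in $V/Z(V)$ and the heights agree. Finally, the acylindricity claim still needs proof. In the paper it follows from two facts:
\begin{enumerate}[(a)]
\item terminality of the folds, i.e.\ $Z_{v_1}\cap Z_{v_2}=1$ for finite-outer neighbours of a common elementary vertex;
\item trivial intersection of the $G$-stabilisers of distinct edges at a non-elementary vertex.
\end{enumerate}
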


As usual, this is a statement for each fixed pair \((G,\phi)\); the
precise input convention is recorded in
Remark~\ref{rem:fixed-input}.

All group and tree actions in this paper are right actions. In particular, our conjugation convention is
\[
x^g=g^{-1}xg,\qquad H^g=g^{-1}Hg, \qquad \ad_g(x)=x^g.
\]
Thus \(G_\phi\) is defined by
\[
 G_{\phi}:=\langle G, t \ : \ g^t=\phi(g), \ \forall g \in G \rangle 
\]
We denote by
\[
\chi:G_\phi\longrightarrow\Z
\]
the height homomorphism, defined by \(\chi(G)=0\) and \(\chi(t)=1\).

There are two immediate cases. If the class of \(\phi\) has finite order in \(\Out(G)\), then \(G_\phi\) has infinite cyclic centre and
its central quotient is hyperbolic. The height homomorphism recovers the central coordinate, and the conjugacy problem in \(G_\phi\)
reduces directly to conjugacy in the hyperbolic quotient. If \(G\) is a closed surface group, then \(G_\phi\) is the fundamental group
of a fibred three-manifold, and the result follows from Pr\'eaux's solution of the conjugacy problem for geometrizable three-manifold
groups \cite{PreauxOrientable,PreauxNonorientable}.

We may therefore suppose that the image of \(\phi\) in \(\Out(G)\) has infinite order and that the canonical elementary JSJ splitting
of \(G\) is non-trivial. We use the canonical, bipartite tree of cylinders
description developed by Guirardel--Levitt \cite{GuirardelLevittCylinders,GuirardelLevittAutomorphisms,
GuirardelLevittJSJ}. One class of vertices has elementary stabiliser, and the other has either rigid or
quadratically hanging stabiliser. Since the tree is canonical, the action of \(G\) extends to an action of \(G_\phi\).

Following Dahmani--Krishna \cite{DahmaniKrishna}, we call
\(H\rtimes_\psi\Z\) the \emph{suspension of \(H\) by \(\psi\)}, or
simply a suspension. Thus the stabiliser in \(G_\phi\) of a vertex
is the suspension of a first-return automorphism of its stabiliser
in \(G\). This gives three initial kinds of vertex group:
\begin{enumerate}[(i)]
\item elementary suspensions, which are virtually \(\Z^2\) - either tori or Klein bottles;
\item rigid suspensions, whose centre is infinite cyclic and whose quotient by the centre is hyperbolic;
\item QH suspensions, which are fundamental groups of fibred three-manifolds.
\end{enumerate}
We call a suspension \(H\rtimes_\psi\Z\) \emph{finite-outer} when \( H\) is
non-elementary hyperbolic and its monodromy \( \psi \) has finite order in the
outer automorphism group of \( H\) - see Definition~\ref{def: finite outer}. Our point of view is to treat finite-outer suspensions as analogues of Seifert-fibred manifolds. 

We refine a QH suspension using the Nielsen--Thurston decomposition of its first-return monodromy. The canonical reduction system is
preserved by the original mapping class, although its curves and complementary components may be permuted and annular orientations may
be reversed. This is equally valid for a non-orientable QH surface, where the canonical system may contain one-sided curves
\cite{ParisNonorientable,Wu}. For each one-sided reduction curve, we split along the boundary of an equivariantly chosen M\"obius neighbourhood; Example~\ref{ex:mobius-extension} motivates this construction, while Lemma~\ref{lem:surface-tree} gives its precise form. Alternatively, one could split only along the two-sided curves of the reduction system. As explained in Remark~\ref{rem:root extensions}, the resulting pieces differ from the standard ones by root extensions, which are algebraically benign but spoil the geometry; the pieces are no longer hyperbolic or Seifert-fibred three-manifolds.

This gives a refinement for the full \(G_\phi\)-action without passing to a power. Dahmani--Krishna's
\(T_{\mathrm{pA}}\)-construction is a closely related precedent
\cite[\S2.4.2]{DahmaniKrishna}.

The new ingredient here is not the relative-hyperbolic decomposition
itself, but the block folding, Theorem~\ref{thm:block-reduction}, and the complete rational local solution
sets needed for a direct graph of groups conjugacy algorithm. The idea of the block folding is that the action of \( G_{\phi}\) on the refined JSJ tree need not be acylindrical. We therefore fold finite-outer suspension vertices until it becomes acylindrical. 

Dahmani and Krishna also show that \(G_\phi\) is relatively
hyperbolic with respect to the suspensions of its maximal
polynomially growing subgroups \cite[Theorem~1.2]{DahmaniKrishna}. 
Since the conjugacy problem in a relatively hyperbolic group reduces to the conjugacy
problems in its peripheral subgroups \cite[Theorem~1.1]{Bumagin}, one
could restrict the block construction below to these polynomial
suspensions and then invoke this reduction.

This saves little here. It removes only the pseudo-Anosov vertices from the final
graph-of-groups argument but their required local properties are
mostly supplied directly by Pr\'eaux, detailed in Proposition~\ref{prop:effective-preaux}, while the block analysis of every
polynomial component is unchanged. It would also introduce a
separate relative-hyperbolic reduction. We therefore retain the
pseudo-Anosov vertices and work directly with the entire refined JSJ.

Our proof is strongly modelled on Pr\'eaux's proof for three-manifold groups
\cite{PreauxOrientable}. Pr\'eaux first proves a conjugacy theorem for cyclically reduced forms in a graph of groups
\cite[Theorem~3.1]{PreauxOrientable}. He then reduces conjugacy in
the whole group to three problems in the vertex groups - see Definition~\ref{def: local problems}:
\begin{enumerate}[(i)]
\item the conjugacy problem;
\item the boundary-parallelism problem;
\item the two-coset problem.
\end{enumerate}
The reduction additionally uses effective cyclic reduction, controlled propagation of conjugacy through edge groups, and a precise description
of infinite families of two-coset solutions. We formulate the latter
as effective rationality in finite products of edge groups. See Definition~\ref{def:rational} and Proposition~\ref{prop:abstract-preaux} for details.

For the pseudo-Anosov surface-by-\(\Z\) vertices, the required local
algorithms are those established by Pr\'eaux, although we appeal to relative hyperbolic machinery to show the rationality of our local sets in Definition~\ref{def: local problems} in the non-orientable case. The other
non-elementary vertices are finite-outer suspensions. Each has an infinite cyclic central subgroup of non-zero height, hyperbolic
quotient, and virtually cyclic peripheral images. Their local problems therefore reduce to standard algorithms for virtually cyclic
subgroups of hyperbolic groups, together with rational constraints in
the central and edge-group coordinates.

Formally we rely on Pr\'eaux for our argument but in fact end up following his proof scheme and rework the relevant ingredients in our setting. In that sense, our result can be regarded as a generalisation of his. Moreover, the analogy between suspensions and fibred 3-manifolds is very strong and our block tree, constructed in section~\ref{sec:blocks}, is a close analogue of the JSJ decomposition for a 3-manifold and recovers it in the geometric setting.

The paper is organised as follows. In Section~\ref{sec:finite} we dispose of finite outer monodromy. In
Section~\ref{sec:preaux} we give a detailed account of the graph of groups machinery used by Pr\'eaux. The canonical
Guirardel--Levitt JSJ and its suspension are described in Section~\ref{sec:GL}. Section~\ref{sec:refinement} treats the
QH refinement and its three-manifold interpretation. Section~\ref{sec:blocks} proves the block-reduction theorem,
constructing a cocompact four-acylindrical tree \( T_{\mathrm{blk}}\) with uniform edge and
vertex types. Finally, Section~\ref{sec:local-verification} verifies
the hypotheses of the abstract Pr\'eaux reduction, one vertex type at
a time, and completes the proof.

Also, while not necessary for the main argument, we prove that \( T_{\mathrm{blk}}\) is invariant under \( \Aut(G_{\phi})\) in Proposition~\ref{prop:invariant block tree}. 
\subsection{Torsion and infinitely many ends}

There is an obvious question as to whether one can weaken the hypothesis on the hyperbolic group \( G\) to allow torsion. 
We do not expect this to be insurmountable but it would make many of the arguments more technical. Finite-outer suspensions still have (virtually) infinite cyclic centres, for instance. The real obstacle is probably the construction of the refined JSJ tree; this would require some analogue of the Nielsen-Thurston reduction system for 2-orbifolds. That said, it may be deducible from the surface case. In any case, it would be a somewhat lengthy extension which we do not attempt.

In earlier joint work with Bogopolski, Maslakova and Ventura, we proved
that free-by-cyclic groups have solvable conjugacy problem \cite{BogopolskiMartinoMaslakovaVentura}. The present theorem is
intended to provide the complementary one-ended, inductive input to a treatment
of cyclic extensions of arbitrary torsion-free hyperbolic groups, which we plan to give separately.

\begin{remark}[Fixed-pair input convention]\label{rem:fixed-input}
	For each fixed pair \((G,\phi)\), the input consists of two words in a fixed generating set of \(G_\phi\). Our algorithm is not uniform in the sense that we take our artefacts as part of the algorithm. That is, we fix once and for all
	\(T_{\mathrm{blk}}\) and its finite quotient graph of groups, together with the presentations, generators, edge maps, virtually abelian and
	hyperbolic quotients of the finite-outer vertices, peripheral conjugacy data, first-return and Nielsen--Thurston data, and translations between the original and
	Bass--Serre generators used below. Effective rational relations may be represented either by automata or by the equivalent coset-wise
	semilinear data of Theorem~\ref{thm:rational-calculus}.
	
	We do not claim a uniform procedure taking a presentation of \(G\) and words for \(\phi\) as input. To obtain one, one would need to
	construct the canonical JSJ and its induced \(\phi\)-action algorithmically, construct the tree \( T_{\mathrm{blk}}\), and extract all the finite data listed above; compare Barrett \cite{Barrett} for the first step. Once these data are available the remaining procedures are those proved or cited below. In
	particular, the partial quasiconvex-subgroup algorithms terminate because the relevant subgroups are known to be quasiconvex. We have tried to organise the material to make such a potential future push to uniformity more readily available.
\end{remark}

\subsection*{Acknowledgements}

I would like to thank Harry Iveson for reading and discussing
Pr\'eaux's paper \cite{PreauxOrientable} with me. I also thank Naomi Andrew and Gilbert Levitt for many conversations about JSJ splittings
and reduction systems beyond the orientable setting, and especially for drawing my attention to \cite{ParisNonorientable}. I'd also like to thank Sara Luder for pointing me in the direction of \cite{KharlampovichMiasnikovWeil}. 

\section{Finite outer monodromy}\label{sec:finite}
\subsection{The finite-outer case}

We start by dealing with the finite-outer case.

\begin{definition}[Finite-outer]
	\label{def: finite outer}
	We call a suspension \( G \rtimes_{\phi} \Z \) \textit{finite-outer} if \( G\) is non-elementary hyperbolic and \( \phi\) induces a finite order outer automorphism of \( G\). 
\end{definition}

Next we have a general observation about central extensions equipped with a height homomorphism.

\begin{lemma}[The height lemma]\label{lem:height}
Let \(V\) be a group, let \(C=\langle z\rangle\leq Z(V)\) be infinite
cyclic, and suppose that there is a homomorphism
\(\eta:V\to\Z\) such that \(\eta(z)\neq 0\). Write
\(\ol V=V/C\). For \(u,v\in V\),
\[
 u\sim_V v
 \quad\Longleftrightarrow\quad
 \eta(u)=\eta(v)\ \text{ and }\ \ol u\sim_{\ol V}\ol v.
\]
Consequently, if \(\ol V\) has solvable conjugacy problem, then so
does \(V\).
\end{lemma}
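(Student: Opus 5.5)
The plan is to prove the two implications of the equivalence directly, and then deduce the algorithmic statement.

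\emph{Forward direction.} Suppose \(v=u^g\) for some \(g\in V\). Since \(\Z\) is abelian, \(\eta\) is constant on conjugacy classes, so \(\eta(u)=\eta(v)\). Applying the quotient map \(V\to\ol V\) gives \(\ol v=\ol u^{\,\ol g}\).

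\emph{Reverse direction.} This carries the content. Suppose \(\eta(u)=\eta(v)\) and \(\ol u\sim_{\ol V}\ol v\).
\begin{enumerate}[(i)]
\item Choose \(\ol g\) with \(\ol u^{\,\ol g}=\ol v\), and lift it to some \(g\in V\).
\item Then \(u^g\) and \(v\) have the same image in \(\ol V\), so \(u^g=v z^k\) for some \(k\in\Z\).
\item Apply \(\eta\). Using invariance under conjugation,
\[
\eta(u)=\eta(v)+k\,\eta(z).
\]
\item Since \(\eta(u)=\eta(v)\) and \(\eta(z)\neq0\), this forces \(k=0\). Hence \(u^g=v\).
\end{enumerate}
Centrality of \(C\) is used in the formulation: \(C\) is normal, so \(\ol V\) is a group. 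It is also used in step (ii): the ambiguity in lifting \(\ol g\) is a central element, which does not affect \(u^g\), and the discrepancy between \(u^g\) and \(v\) lies in \(C\).

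\emph{Algorithmic consequence.} Given words \(u,v\), the algorithm does two things. First, it computes \(\eta(u)\) and \(\eta(v)\), which is possible since \(\eta\) is given by its values on the fixed generators. Second, it runs the conjugacy algorithm of \(\ol V\) on the images \(\ol u,\ol v\), which are obtained by reading the same words in the induced generators. By the equivalence, \(u\) and \(v\) are conjugate exactly when both tests succeed.

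\emph{Main obstacle.} There is no serious one. The only point needing care is step (iv): the cancellation of the central discrepancy \(z^k\) through \(\eta(z)\neq0\), which is where the height homomorphism is essential.
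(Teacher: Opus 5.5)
Your proof is correct and is the paper's argument: lift a quotient conjugator to $g$, write $u^g=vz^k$, and apply $\eta$ to force $k=0$. You also spell out the algorithmic consequence (compare heights, then run the conjugacy algorithm in $V/C$), which the paper leaves implicit.
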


\begin{proof}
The forward implication is immediate. Conversely, suppose that \(\eta(u)=\eta(v)\) and that \(\ol u\) and \(\ol v\) are conjugate in
\(\ol V\). Lift a conjugator to an element \(h\in V\). There is then an integer \(k\) such that
\[
u^h=vz^k.
\]
Applying \(\eta\) gives
\[
\eta(u)=\eta(v)+k\eta(z).
\]
Since \(\eta(u)=\eta(v)\) and \(\eta(z)\neq 0\), we have \(k=0\).
\end{proof}

This allows us to deal with the finite-outer case easily. 

\begin{proposition}\label{prop:finite-outer}
Suppose that \(G\) is a non-elementary torsion-free hyperbolic group
and that the image of \(\phi\in\Aut(G)\) has finite order in
\(\Out(G)\). Then \(G_\phi:= G \rtimes_{\phi} \Z\) has infinite cyclic centre, the quotient
of \(G_\phi\) by its centre is hyperbolic, and \(G_\phi\) has solvable
conjugacy problem.
\end{proposition}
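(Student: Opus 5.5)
Since the class of \(\phi\) has some finite order \(n\ge 1\) in \(\Out(G)\), there is \(a\in G\) with \(\phi^n=\ad_a\). In the convention \(g^t=\phi(g)\) we get \(g^{t^n}=\phi^n(g)=a^{-1}ga\), so \(z:=t^na^{-1}\) commutes with every \(g\in G\). It also commutes with \(t\): since \(\phi^n(a)=a^{-1}aa=a\), we have \(\phi(a)=\phi(\phi^n(a))=\phi^n(\phi(a))=a^{-1}\phi(a)a\). Hence \(\phi(a)\) commutes with \(a\), and therefore \(\phi(a)=a\) or both lie in a common cyclic subgroup. In either case I will aim to show \(a^t=a\), so that \(z^t=t^n\phi(a)^{-1}=z\). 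For the cyclic case, write \(a=c^p\) and \(\phi(a)=c^q\) with \(c\) generating the maximal cyclic subgroup containing \(a\) (maximal cyclic subgroups are the centralisers, as \(G\) is torsion-free hyperbolic). Then \(\phi\) maps \(C_G(a)\) to \(C_G(\phi(a))=C_G(a)\), so \(\phi(c)=c^{\pm1}\). Because \(\phi^n(a)=a\) and \(\phi^n\) is conjugation by \(a\), the sign is compatible, and \(\phi(a)=a^{\pm 1}\). If the sign were \(-1\), then \(\chi\)-height considerations would fail to produce a central element. In that case I replace \(n\) by \(2n\) and \(a\) by \(a\phi^n(a)=a^2\); then \(z=t^{2n}a^{-2}\) commutes with \(t\), since \(\phi\) preserves \(\langle a^2\rangle\) up to inversion, and I check directly that \(\phi(a^{2})\) conjugation equals \(\phi^{2n}\). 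This sign bookkeeping is the only fiddly point and I expect it to be the main (minor) obstacle. The cleanest route is probably to note that \(\phi(a)=a^{-1}\) together with \(\phi^n=\ad_a\) forces \(a^{-1}=\phi^{n+1}(a)\cdot\)(conjugate), which gives a contradiction in a torsion-free group. In any case we obtain a central element \(z\) with \(\chi(z)=n\neq 0\) (or \(2n\)).

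Next I will identify the full centre. Any central element \(w=gt^k\) has \(\chi(w)=k\). If \(k=0\), then \(w=g\in Z(G)=1\), since \(G\) is non-elementary hyperbolic. Hence \(\chi\) restricted to \(Z(G_\phi)\) is injective, so \(Z(G_\phi)\) is infinite cyclic, containing \(\langle z\rangle\) with finite index.

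For hyperbolicity, consider \(\ol V=G_\phi/\langle z\rangle\). The image of \(G\) injects, since \(G\cap\langle z\rangle=1\) by height, and has finite index (at most \(n\)) because \(t^n\equiv a\) modulo \(z\). So \(\ol V\) is virtually \(G\), hence hyperbolic. The quotient by the full centre is a further quotient by a finite cyclic group \(Z(G_\phi)/\langle z\rangle\), and a quotient of a hyperbolic group by a finite normal subgroup is hyperbolic (it is quasi-isometric).

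Finally, apply Lemma~\ref{lem:height} with \(V=G_\phi\), \(C=\langle z\rangle\), and \(\eta=\chi\). Hyperbolic groups have solvable conjugacy problem, and \(\ol V\) is hyperbolic with an explicit finite presentation obtained from that of \(G_\phi\) by adding the relator \(z\). Therefore conjugacy in \(G_\phi\) reduces to checking \(\chi(u)=\chi(v)\), which is computable from exponent sums in \(t\), together with conjugacy in \(\ol V\). All of this is effective under the fixed-input convention of Remark~\ref{rem:fixed-input}, since \(n\) and \(a\) are fixed data.
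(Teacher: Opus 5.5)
\textbf{There is a genuine gap at the crucial step: you never establish that \(z=t^na^{-1}\) commutes with \(t\), i.e.\ that \(\phi(a)=a\).} You apply the identity \(\phi\circ\phi^n=\phi^n\circ\phi\) only to the single element \(a\). That yields just that \(\phi(a)\) commutes with \(a\), hence \(\phi(a)=a^{\pm1}\). This information alone cannot exclude \(\phi(a)=a^{-1}\): via \(\phi^n(a)=a\) it only forces \(n\) to be even.

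Your proposed repair does not work. With \(n\) replaced by \(2n\) and \(a\) by \(a^2\), you get \(z^t=t^{2n}\phi(a)^{-2}=t^{2n}a^{2}\neq t^{2n}a^{-2}=z\). Preserving \(\langle a^2\rangle\) ``up to inversion'' is not enough; commuting with \(t\) requires \(\phi(a^2)=a^2\) exactly. The sketched ``cleanest route'' is not an argument either. From \(\phi(a)=a^{-1}\) and \(\phi^n(a)=a\) one only gets \(\phi^{n+1}(a)=a^{-1}\), whichever order you compute it in, and there is no contradiction. So the sentence ``in any case we obtain a central element \(z\) with \(\chi(z)=n\) (or \(2n\))'' is unsupported, and centrality is the heart of the proposition.

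The missing idea is to use that \(\phi\) commutes with \(\phi^n\) on all of \(G\), not just on \(a\):
\(\ad_{\phi(a)}=\phi\circ\ad_a\circ\phi^{-1}=\phi\circ\phi^n\circ\phi^{-1}=\phi^n=\ad_a\).
Hence \(\phi(a)a^{-1}\in Z(G)=1\). This is exactly the paper's argument, and it makes your centraliser case analysis unnecessary. It also rules out the case \(\phi(a)=a^{-1}\), \(a\neq1\), since that would make \(a^2\) central.

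With that step repaired, the rest of your proposal is correct. Your treatment of the centre differs mildly from the paper's, and both routes work:
\begin{itemize}
\item The paper takes the exponent \(m\) minimal. It then shows \(\chi(Z(G_\phi))=m\Z\), so \(z\) generates the centre, and \(G\) has index \(m\) in the central quotient.
\item You allow a non-minimal \(n\). You observe that \(\langle z\rangle\) has finite index in the infinite cyclic centre, and pass to the full central quotient by a finite normal subgroup.
\end{itemize}
The height lemma applies to \(\langle z\rangle\) directly either way.
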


\begin{proof}
Choose \(m>0\) least such that \(\phi^m=\ad_a\) for some \(a\in G\); the hypotheses ensure this exists.

Since \(G\) is non-elementary and torsion-free hyperbolic, it has trivial centre. The two expressions
\(\phi\circ\phi^m\circ\phi^{-1}\) and \(\phi^m\) show that
\(\ad_{\phi(a)}=\ad_a\), so \(\phi(a)=a\). It follows that
\[
z=t^ma^{-1}
\]
commutes both with \(G\) and with \(t\). Thus \(z\) is central.

No non-trivial central element lies in \(G\), and hence the restriction of \(\chi\) to \(Z(G_\phi)\) is injective. If
\(c\in Z(G_\phi)\) has height \(n\), then conjugation by \(c\) on
\(G\) is trivial, while its outer automorphism class is
\([\phi]^n\). Thus \(\phi^n\) is inner, and the minimality of \(m\)
gives \(m\mid n\). Since \(z\) is central and \(\chi(z)=m\), it
follows that
\[
\chi\bigl(Z(G_\phi)\bigr)=m\Z.
\]
Injectivity of \(\chi\) on the centre now shows that \(z\) generates
\(Z(G_\phi)\).

The image of \(G\) in \(G_\phi/Z(G_\phi)\) therefore has finite
index \(m\). Hence \(G_\phi/Z(G_\phi)\) is hyperbolic, and the
conclusion follows from Lemma~\ref{lem:height}.
\end{proof}

\section{Pr\'eaux's graph of groups machinery}\label{sec:preaux}

We recall the part of \cite{PreauxOrientable} which will be used below. Although Pr\'eaux works with the graph of groups coming from
the JSJ decomposition of a Haken three-manifold, the conjugacy
criterion itself is the standard Bass--Serre conjugacy theorem.
We state it in the form most convenient for the present paper.

\subsection{Cyclically reduced forms}

Let \(\cG\) be a finite connected graph of groups with underlying
graph \(X\). For an oriented edge \(e\), write \(o(e)\) and \(t(e)\)
for its initial and terminal vertices, and
\[
 \iota_e^-:G_e\longrightarrow G_{o(e)},\qquad
 \iota_e^+:G_e\longrightarrow G_{t(e)}
\]
for the two edge monomorphisms. We identify \(G_{\bar e}=G_e\), with
\(\iota_{\bar e}^-=\iota_e^+\) and
\(\iota_{\bar e}^+=\iota_e^-\), and abbreviate
\(c^-=\iota_e^-(c)\) and \(c^+=\iota_e^+(c)\).

The Bass group of \(\cG\) is obtained from the free product of the
vertex groups by adjoining the oriented edges and imposing
\[
\bar e=e^{-1},
\qquad (c^-)^e=c^+\quad(c\in G_e).
\]
A graph-of-groups path from \(v_1\) to \(v_{n+1}\) is a word
\[
g_1e_1g_2e_2\cdots g_ne_ng_{n+1},
\qquad g_i\in G_{v_i},
\]
carried by an edge path
\[
(v_1,e_1,v_2,\ldots,e_n,v_{n+1}).
\]
For a chosen base vertex \(v\), the fundamental group
\(\pi_1(\cG,v)\) is the subgroup of the Bass group represented by
closed graph-of-groups paths based at \(v\). We suppress the
basepoint when it is irrelevant.

A subword \(eg\bar e\), with \(g\in G_{t(e)}\), is reducible precisely
when
\[
 g\in\iota_e^+(G_e).
\]
Writing \(g=c^+\), the defining relation gives the elementary
reduction
\[
e\,c^+\,\bar e\longmapsto c^-.
\]
The rule for \(\bar ege\) is the same rule applied to \(\bar e\). A path is \emph{reduced} if it admits no such elementary reduction,
and a closed path is \emph{cyclically reduced} if every cyclic
permutation is reduced. Its edge length is \(n\). Thus effective membership in incident edge-group images gives an algorithm for reducing and then cyclically reducing a word. These definitions and the normal-form theorem are standard; see \cite[I.5]{Serre} and \cite{DicksDunwoody}.

\begin{theorem}[Graph-of-groups conjugacy theorem]\label{thm:Collins}
Let \(u,u'\) be represented by cyclically reduced forms in
\(\pi_1(\cG)\).
\begin{enumerate}[(i)]
\item Conjugate elements have the same edge length.
\item Suppose both forms have length zero, with
  \(u\in G_v\) and \(u'\in G_{v'}\). Then \(u\) and \(u'\)
  are conjugate if and only if either they are conjugate in a
  common vertex group, or there is an edge path
  \[
   (v=v_0,e_1,v_1,\ldots,e_p,v_p=v')
  \]
  and elements \(c_i\in G_{e_i}\) such that
  \[
  u\sim_{G_{v_0}}c_1^-,
  \quad
  c_i^+\sim_{G_{v_i}}c_{i+1}^-\ (1\leq i<p),
  \quad
  c_p^+\sim_{G_{v_p}}u'.
  \]
\item Suppose both forms have positive length. After a cyclic permutation of one form, their underlying loops must agree:
  \[
  C=(v_1,e_1,\ldots,v_n,e_n).
  \]
  Writing the vertex labels as \(g_i,g_i'\), the elements are
  conjugate if and only if there exist \(c_i\in G_{e_i}\) such
  that
  \[
   g_1=(c_n^+)^{-1}g_1'c_1^-
  \]
  in \(G_{v_1}\), and
  \[
   g_i=(c_{i-1}^+)^{-1}g_i'c_i^-
   \qquad(2\leq i\leq n)
  \]
  in \(G_{v_i}\). In that case
  \[
  (u')^{c_n^+}=u
  \]
  in the whole group.
\end{enumerate}
\end{theorem}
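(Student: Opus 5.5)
The plan is to work in the Bass--Serre tree \(T\) of \(\cG\) and use the standard classification of elements acting on a tree. An element is \emph{elliptic} if it fixes a vertex and \emph{hyperbolic} otherwise. A hyperbolic element \(u\) has an axis \(A_u\) on which it translates by \(\ell(u)>0\). The first step is the correspondence between words and the action. A cyclically reduced form of length \(n\ge1\) gives an element whose axis passes through the base vertex \(\tilde v_1\), and whose segment \([\tilde v_1,u\tilde v_1]\) projects onto the loop \(C\). Reducedness means there is no backtracking along this segment. Cyclic reducedness means there is no backtracking at the junction between consecutive translates, so \(\ell(u)=n\). A form of length zero is elliptic. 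Since translation length is a conjugacy invariant and elliptic elements are never conjugate to hyperbolic ones, (i) follows, and so does the separation of cases (ii) and (iii). I will use the usual right-action convention and the relations \((c^-)^e=c^+\) throughout.

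For (ii), suppose \(u\in G_v\) and \(u'\in G_{v'}\) with \(u'=u^g\). Then \(u\) fixes \(\tilde v\), and \(u'\) fixes both \(\tilde v'\) and \(g^{-1}\tilde v\) (up to the side conventions). Since fixed sets are subtrees, \(u'\) fixes the geodesic between these two vertices. Reading off the edges of this geodesic, I express \(g\) as a path \(h_0e_1h_1\cdots e_ph_p\). Because the element fixes each edge of the geodesic, its conjugate by each partial product lies in the corresponding edge group, and this yields the \(c_i\). Concretely, I induct on \(p\), the distance between \(\tilde v\) and \(g\tilde v'\). If \(p=0\), the two elements are conjugate in a common vertex group. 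Otherwise the first edge stabiliser contains a conjugate of \(u\), which gives \(u\sim c_1^-\). Then \((c_1^-)^{e_1}=c_1^+\), and we continue from there. The converse is a direct computation: concatenating the conjugators \(h_{i-1}e_i\) gives the required conjugator.

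For (iii), the "if" direction is a telescoping computation. Substitute \(g_i=(c_{i-1}^+)^{-1}g_i'c_i^-\) into \(u=g_1e_1\cdots g_ne_n\), and use \(c_i^-e_i=e_ic_i^+\). The interior \(c\)'s then cancel, leaving \((c_n^+)^{-1}u'c_n^+\).

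For "only if", suppose \(u=(u')^x\). Then \(x\) carries \(A_{u'}\) onto \(A_u\). After a cyclic permutation of \(u'\), which changes it only by conjugation by a path prefix, I may assume \(x\) sends the base vertex of \(u'\) on its axis to that of \(u\). The stabiliser of that base vertex is \(G_{v_1}\), so \(x\in G_{v_1}\), and I write \(x=c_n^+\) after the appropriate normalisation. Both elements then translate the same axis and start at the same vertex. Comparing the images of the successive edges along the fundamental segment, uniqueness of reduced normal forms (\cite[I.5]{Serre}) forces the underlying loops to agree. It also forces the vertex labels to differ exactly by edge-group elements on each side, and solving successively produces \(c_1,\dots,c_{n-1}\).

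I expect the main obstacle to be this last bookkeeping step. It requires showing that the conjugator can be normalised into \(G_{v_1}\), and matching the edge-group elements to the prescribed sides \(c^\pm\) under the right-action conventions. I would handle it by induction on \(n\), using the normal form theorem to peel off one syllable at a time.
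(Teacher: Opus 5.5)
Your argument is correct in substance, but it follows a different route from the paper, because the paper gives no proof of Theorem~\ref{thm:Collins}. It cites Pr\'eaux's Theorem~3.1, and refers to Serre~I.5 and to the word-combinatorial Collins-lemma arguments for amalgams and HNN extensions in Lyndon--Schupp.

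Your Bass--Serre version has two advantages:
\begin{enumerate}[(a)]
\item it makes (i), and the split into cases (ii) and (iii), transparent: translation length equals the edge length of a cyclically reduced form, and being elliptic or hyperbolic is conjugacy invariant;
\item in (ii) it reads the chain off the pointwise-fixed geodesic joining the two fixed vertices, so $p$ is a tree distance. This is exactly how the paper later uses (ii) in Remark~\ref{rem:acylindrical}.
\end{enumerate}
The price is that you must import the identification of reduced paths with geodesics, which is itself the normal form theorem.

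You read \emph{after a cyclic permutation} existentially, with the rotation dictated by where the conjugator sends $\tilde v_1$, and that is the right reading. When the loop $C$ has a rotational symmetry, a wrongly chosen alignment can fail even for conjugate elements. This is why Proposition~\ref{prop:abstract-preaux} tries every alignment.

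Two statements in your \emph{only if} argument for (iii) need correcting.

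First, after normalising you do not get the same axis. The conjugator $x$ fixes $\tilde v_1$ and carries $A_{u'}$ onto $A_u$, but the axes can diverge at once. For example, take $BS(2,3)=\langle a,e\mid e^{-1}a^2e=a^3\rangle$ with $u'=e$, $x=a^3$ and $u=(u')^x=a^{-1}e$. Then $x$ fixes $\tilde v_1$ and $\tilde v_1e$ but neither $\tilde v_1e^{-1}$ nor $\tilde v_1e^2$, so the two axes share only one edge.

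Second, $x=c_n^+$ is not a normalisation; it is forced. This is where the convention enters that the forms $g_1e_1\cdots g_ne_n$ carry no terminal vertex label. Once you have $x\in G_{v_1}$ (replace $x$ by $xu^k$ and choose the rotation as you describe), compare the two reduced expressions
\[
u=g_1e_1g_2\cdots g_ne_n\cdot 1=(x^{-1}g_1')e_1g_2'\cdots g_n'e_n\cdot x .
\]
The second is reduced because its interior labels are those of $u'$. The normal form theorem then gives $c_i\in G_{e_i}$ with
\begin{enumerate}[(a)]
\item $g_1=x^{-1}g_1'c_1^-$;
\item $g_i=(c_{i-1}^+)^{-1}g_i'c_i^-$ for $2\le i\le n$;
\item $1=(c_n^+)^{-1}x$, from the terminal labels.
\end{enumerate}
That is all of (iii) in one step, so your \emph{main obstacle} and the induction on $n$ disappear. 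Geometrically, both axes contain the edge at $\tilde v_1$ stabilised by $\iota^+_{e_n}(G_{e_n})$, on the same side of $\tilde v_1$, and $x$ must fix that edge.
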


This is Theorem~3.1 of Pr\'eaux \cite{PreauxOrientable}; compare Serre \cite[I.5]{Serre} and the conjugacy theorems for amalgamated
products and HNN extensions in \cite[Chapter~IV]{LyndonSchupp}. The last sentence of (iii) is the
graph-of-groups version of Collins' lemma. That is, for cyclically reduced words, conjugation involves cyclic permutation followed by conjugation by an element of an edge group.

\subsection{The three local problems}

Let \(V\) be a vertex group and let \(E,E'\leq V\) be images of
incident edge groups. We need to record not just the solvability of certain conjugacy related problems, but their entire solution sets so that we can enumerate all solutions and `intersect' them when needed to find consistent solutions. Compare with \cite[\S4]{PreauxOrientable}.


\begin{definition}
\label{def: local problems}	
	Let \( V\) be a group with subgroups, \( E, E' \leq V\). 

\noindent	
The \emph{boundary-parallelism problem} for \( u \in V \) is the set,
\[
\mathcal C_E(u)=\{e\in E\mid e\sim_V u\}.
\]	
The \emph{two-boundary conjugacy relation} is the set, 
\[
\mathcal P_{E,E'}(V)
=\{(e,e')\in E\times E'\mid e\sim_V e'\}.
\]	
Finally, the \emph{two-coset problem} for \( u , v \in V \) is the set
\[
\mathcal C_{E,E'}(u,v)
=\{(e,e')\in E\times E'\mid u=eve'\}.
\]	
\end{definition}

\begin{remark}
	The boundary-parallelism problem determines when an element fixes an edge in the associated tree. The two-boundary conjugacy relation allows one to determine if an element fixes edges from potentially different orbits. The two-coset problem is key in determining when two hyperbolic elements are conjugate. These are all used to solve the conjugacy problem in Proposition~\ref{prop:abstract-preaux} below. 
\end{remark}

For an algorithmic reduction it is not enough merely to decide whether these sets are empty. One needs an effective description
which is stable under the intersections, coordinate changes, and projections arising at the next vertices. The following language makes that requirement precise.

\begin{definition}[Effective rational edge data]\label{def:rational}
Let \(P\) be a finite product of edge groups, equipped with the fixed generating sets coming from the graph of groups. A subset \(R\subseteq
P\) is \emph{effectively rational} if an automaton accepting a regular
language whose image in \(P\) is \(R\) can be constructed. A rational subset of \(E\times E'\) will also be called a \emph{rational
relation} between \(E\) and \(E'\).
\end{definition}

We recall the standard facts that make this definition useful here. Rationality in a finitely generated group is independent of the
chosen finite generating set; see the discussion following \cite[Definition~3.6]{CiobanuEvetts}. Rational subsets are preserved
under homomorphic images, translations, finite unions, products, and star. In particular, rational relations can be treated simply as
rational subsets of direct products.

There is an important warning concerning intersection. Let
\(\pi:X^*\to G\) be the evaluation map, and suppose
\[
 R_i=\pi(L_i)
\]
for regular languages \(L_i\subseteq X^*\). The usual product
automaton constructs \(L_1\cap L_2\), but in general
\[
 \pi(L_1\cap L_2) \ \subsetneq\ \pi(L_1)\cap\pi(L_2);
\]
different words in the two languages may represent the same group
element. Note that we could replace \(L_i\) with \(\pi^{-1}(\pi(L_i))\), but this does not solve the problem as the larger set need not be regular. There is a difference between intersecting regular languages and intersecting rational subsets of a group; see \cite[\S3]{BartholdiSilva}.

This can happen in automatic groups. For example,
\[
 F(a,b)\times\Z
\]
is automatic. The subgroups
\[
 H=F(a,b)\times\{0\},
 \qquad
 K=\langle(a,0),(b,1)\rangle
\]
are finitely generated and hence rational, whereas
\[
 H\cap K =\{(w,0)\mid \operatorname{exp}_b(w)=0\}
\]
is a free group of infinite rank. A subgroup of a group is rational if and only if it is finitely generated
\cite[\S3.1]{BartholdiSilva}, so \(H\cap K\) is not rational.

Thus the only closure property used below which is not formal in an arbitrary group is intersection. It is available here because every
edge group is virtually \(\Z^2\), and hence every finite product of edge groups is virtually abelian. The virtually abelian hypothesis in the following theorem is therefore essential; it cannot be replaced
merely by automaticity.

A subset of \(\Z^d\) is \emph{semilinear} if it is a finite union of
sets of the form
\[
 a+\mathbb N v_1+\cdots+\mathbb N v_k, \qquad a,v_i\in\Z^d.
\]
The following theorem is not new. Its structural content is contained in the rational--semilinear calculus of Ciobanu--Evetts:
Theorem~3.8 and Lemma~3.9 of \cite{CiobanuEvetts}, together with their Propositions~3.4, 3.11 and~3.19 and Lemmas~3.18 and~3.20. We assemble
these results in the effective form used below and record explicitly how the relevant group operations appear in coset coordinates.

\begin{theorem}[Effective rational calculus in virtually abelian groups]
\label{thm:rational-calculus}
Let \(P\) be a finitely generated virtually abelian group, equipped
with a normal finite-index free-abelian subgroup
\(P_0\cong\Z^d\), a finite transversal, and the corresponding
coordinates.
\begin{enumerate}[label=\textup{(\roman*)}]
\item A subset \(R\subseteq P\) is rational if and only if its coordinates in each coset cell \(P_0t\) form a semilinear
  subset of \(\Z^d\). Conversion between automata and finite coset-wise semilinear descriptions is effective.
\item For finite products of such groups, effective rational subsets
  are effectively closed under finite unions and intersections, Cartesian products, coordinate projections, translations, and
  images and inverse images of maps which are integral affine on each of finitely many coset cells.
\item Multiplication, inversion, conjugation by a fixed element, and
  fixed homomorphisms between finitely generated virtually abelian groups are maps of the kind occurring in
  \textup{(ii)}, after a finite refinement of the coset cells.
\item Non-emptiness of an effective rational subset is decidable.
\end{enumerate}
In every part, the output may be given either by an automaton or by a
finite coset-wise semilinear description.
\end{theorem}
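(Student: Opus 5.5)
The plan is to reduce everything to the classical theory of semilinear sets in \(\Z^d\) (Ginsburg--Spanier, Eilenberg--Sch\"utzenberger), transported coset by coset. For (i), write \(P=\bigsqcup_{t\in T}P_0t\). If \(R=\pi(L)\) for a regular language \(L\), I would use Theorem~3.8 and Lemma~3.9 of Ciobanu--Evetts. The automaton for \(L\) can be re-read over an alphabet of generators of \(P_0\) together with transversal letters, because a product of generators lands in a determined coset and contributes a determined \(P_0\)-element via the fixed cocycle \(tx=\sigma(t,x)\,t'\). The set of words ending in state \(q\) with coset label \(t\) is then a rational subset of \(P_0\cong\Z^d\). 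By Parikh/Eilenberg--Sch\"utzenberger this is effectively semilinear. For the converse, \(a+\mathbb N v_1+\cdots+\mathbb N v_k\) is the image of the regular expression \(w_a w_{v_1}^*\cdots w_{v_k}^*\), followed by a word for \(t\). Both directions are effective, since the cocycle data are finite and fixed in advance.

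For (ii) and (iii), I would work throughout in coset-wise semilinear form. A finite product of virtually abelian groups is again virtually abelian, with \(P_0\) the product of the \(P_0\)'s and the coset cells the products of cells, so Cartesian products are immediate. Unions are formal. Intersection is done cell by cell, using the effective closure of semilinear sets of \(\Z^d\) under intersection and complement; these are Presburger-definable sets, and the conversion is effective (Ginsburg--Spanier). Projections and translations in coordinates are again Presburger operations.

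For a map that is integral affine on each of finitely many cells:
\begin{itemize}
\item the image of a linear set \(a+\sum\mathbb N v_i\) under \(x\mapsto Ax+b\) is \(Aa+b+\sum \mathbb N Av_i\);
\item the inverse image is the Presburger-definable set \(\{x : Ax+b\in S\}\).
\end{itemize}
Pieces of the domain cells that are not full cells are handled by intersecting first. For (iii), I would compute explicitly that \(p_0t\cdot q_0s=(p_0\,{}^{t}q_0\,\sigma(t,s))\,u\), where \(ts=\sigma(t,s)u\) and \({}^tq_0\) denotes the conjugation action of \(t\) on \(P_0\), which is a fixed integral matrix. This is affine on each pair of cells of \(P\times P\). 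Inversion, conjugation by a fixed element, and a fixed homomorphism \(f\) behave similarly, except that \(f(P_0)\) need not lie in \(Q_0\). This is where the finite refinement enters: I replace \(P_0\) by the finite-index normal subgroup \(P_0\cap f^{-1}(Q_0)\), compute a basis and transversal for it, and re-express semilinear sets in the new cells. The re-expression is again a Presburger computation (intersecting with a sublattice coset and changing basis).

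For (iv), a semilinear description is empty if and only if it has no linear component, so non-emptiness reduces to the conversion in (i). I expect the main obstacle to be the effectivity bookkeeping in (i) and in the refinement step of (iii). One has to ensure that conversion from automata to semilinear data is genuinely algorithmic, using the effective Parikh theorem, and that all the coset and cocycle data are part of the fixed input in the sense of Remark~\ref{rem:fixed-input}. I would cite Ciobanu--Evetts (Propositions~3.4, 3.11, 3.19 and Lemmas~3.18, 3.20) for the structural statements and supply only the effectivity remarks.
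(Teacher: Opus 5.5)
Your proposal is correct and follows essentially the same route as the paper. You reduce coset by coset to semilinear subsets of \(\Z^d\) (your cocycle automaton is the content of Ciobanu--Evetts' Lemma~3.18), convert effectively via Eilenberg--Sch\"utzenberger, establish the closure properties cell by cell, use the explicit affine multiplication formula for (iii) together with a refinement of \(P_0\) for fixed homomorphisms, and read non-emptiness off the semilinear description. The only difference is in citation: you justify intersections and inverse images through Presburger definability (Ginsburg--Spanier), whereas the paper cites Theorem~3.8(2) and Propositions~3.4 and~3.11 of Ciobanu--Evetts for the same facts.
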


\begin{proof}
By \cite[Proposition~3.19]{CiobanuEvetts}, rational subsets of \(P\) are exactly its coset-wise polyhedral subsets. Proposition~3.11 and
Theorem~3.8(1) of that paper identify polyhedral, semilinear and rational subsets of the free-abelian cells. This gives the equivalence in \textup{(i)}.

The passage is effective with the fixed coordinate data. Indeed, \cite[Lemma~3.18]{CiobanuEvetts} gives the computable reduction of a
rational subset in a coset to one in \(P_0\), and the proof of \cite[Proposition~3.19]{CiobanuEvetts} gives the converse construction.
Within \(P_0\), the constructive Eilenberg--Sch\"utzenberger theorem
\cite{EilenbergSchutzenberger} converts between automata and semilinear descriptions. Appending or deleting the appropriate
transversal letter performs the conversion cell by cell.

Finite products are covered by \cite[Lemma~3.20]{CiobanuEvetts}. On each free-abelian cell,
intersection is Theorem~3.8(2) of that paper, integral-affine images are covered by Lemma~3.9, and integral-affine inverse images follow
from Propositions~3.4 and~3.11. Finite unions and Cartesian products
are immediate from finite semilinear descriptions, and coordinate projections are integral-affine images. The cited constructions are
effective, and there are only finitely many coset cells. Translations are integral affine on those cells. This proves \textup{(ii)}.

On fixed coset cells, multiplication has the form
\[
(r,u)(s,v)=(t,\,A_{r,s}u+B_{r,s}v+c_{r,s})
\]
for integral matrices and an integral translation vector determined by the finite quotient. Inversion and conjugation also follow in the same way. A fixed homomorphism has an integral linear part and a translation on each coset, after possibly refining cosets. There are only finitely many cosets, proving
\textup{(iii)}. Finally, non-emptiness is read from the finite semilinear description, proving \textup{(iv)}. For a systematic
treatment of effective rational constraints in virtually abelian groups, see \cite{CiobanuEvettsLevine}.
\end{proof}

We continue to state the local outputs as rational subsets and relations. In computations, Theorem~\ref{thm:rational-calculus} uses
coset-wise semilinear coordinates as the effective normal form for intersection and projection, and converts back to automata when a
rational-language output is required.

The preceding rational calculus turns the graph-of-groups conjugacy
criterion into the following abstract form of Pr\'eaux's reduction.

\begin{proposition}[Abstract Pr\'eaux reduction]\label{prop:abstract-preaux}
	Let \(\cG\) be a finite graph of groups whose edge groups are
	virtually \(\Z^2\), equipped with effective coordinates as in
	Definition~\ref{def:rational}, and whose edge monomorphisms are
	effectively given in those coordinates. Suppose:
	\begin{enumerate}[(i)]
		\item membership of every incident edge group in a vertex group is decidable;
		\item conjugacy and boundary parallelism are solvable in every vertex
		group, and the sets \(\mathcal C_E(u)\) and relations
		\(\mathcal P_{E,E'}(V)\) are effectively rational;
		\item every two-coset relation
		\(\mathcal C_{E,E'}(u,v)\) is effectively rational;
		\item there is a computable \(k\) such that any two conjugate non-trivial elliptic elements are joined in
		Theorem~\ref{thm:Collins}(ii) by a chain whose edge path has
		length at most \(k\).
	\end{enumerate}
	Then \(\pi_1(\cG)\) has solvable conjugacy problem.
\end{proposition}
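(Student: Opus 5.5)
The plan is to give an algorithm that, on input two words \(u,u'\), first brings each to a cyclically reduced form, and then applies the case split of Theorem~\ref{thm:Collins}.

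\textbf{Cyclic reduction.} By hypothesis (i) we can decide whether a subword \(eg\bar e\) is reducible, i.e.\ whether \(g\in\iota_e^+(G_e)\). When it is, we can write \(g=c^+\) explicitly by enumerating edge-group words. So we reduce repeatedly, and then cyclically permute and reduce again. This terminates, since edge length strictly drops at each step. If the two cyclically reduced forms have different edge lengths, we answer ``not conjugate'' by Theorem~\ref{thm:Collins}(i).

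\textbf{Length zero.} Here \(u\in G_v\) and \(u'\in G_{v'}\). First we test conjugacy in a common vertex group using (ii). Otherwise, by (iv), it suffices to consider edge paths of length at most \(k\), and there are finitely many of these. For a fixed path \((v_0,e_1,\dots,e_p,v_p)\) we build the set of admissible \(c_1\) as \(\mathcal C_{E_1}(u)\), transported to \(G_{e_1}\) by the effective edge map; it is effectively rational by (ii). Inductively, given the rational set \(S_i\subseteq G_{e_i}\) of reachable \(c_i\), we take
\[
S_{i+1}=\pi_2\bigl(\mathcal P_{E_i^+,E_{i+1}^-}(G_{v_i})\cap (S_i^+\times G_{e_{i+1}}^-)\bigr),
\]
transported back to \(G_{e_{i+1}}\). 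At the end we intersect \(S_p^+\) with \(\mathcal C_{E_p^+}(u')\). Every step uses only intersection, products, projection and images under fixed homomorphisms in finite products of virtually \(\Z^2\) groups, so Theorem~\ref{thm:rational-calculus}(ii),(iii) keep everything effectively rational. Non-emptiness of the final set is decidable by (iv) of that theorem, and \(u\sim u'\) exactly when some path gives a non-empty set.

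\textbf{Positive length.} There are only \(n\) cyclic permutations of \(u'\). For each one, we check whether its underlying loop equals that of \(u\); this is purely combinatorial. For each matching permutation, we must decide whether there is a tuple \((c_1,\dots,c_n)\in P=\prod G_{e_i}\) satisfying the equations of Theorem~\ref{thm:Collins}(iii). The \(i\)-th equation says \(\bigl((c_{i-1}^+)^{-1},c_i^-\bigr)\in\mathcal C_{E_{i-1}^+,E_i^-}(g_i,g_i')\), with indices cyclic. Each such relation is effectively rational by (iii). Inversion and the edge maps are admissible maps by Theorem~\ref{thm:rational-calculus}(iii), so its pullback \(R_i\subseteq G_{e_{i-1}}\times G_{e_i}\) is effectively rational. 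Its cylinder \(R_i\times\prod_{j\ne i-1,i}G_{e_j}\subseteq P\) is effectively rational too, using products and permutation of coordinates. We then intersect all \(n\) cylinders inside the virtually abelian group \(P\) and test non-emptiness. Since \(P\) is a finite product of virtually \(\Z^2\) groups, it is virtually abelian, and Theorem~\ref{thm:rational-calculus} applies. When \(n=1\) the single equation couples \(c_1\) with itself; we handle this by intersecting \(R_1\) with the diagonal of \(G_{e_1}\times G_{e_1}\), which is a subgroup and hence rational.

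\textbf{Main obstacle.} The main obstacle is the intersection step. As the warning before Theorem~\ref{thm:rational-calculus} explains, intersecting the accepting languages is not enough; we must pass to coset-wise semilinear coordinates in a finite-index free-abelian subgroup of \(P\). This is exactly why the edge groups are required to be virtually \(\Z^2\) and why hypotheses (ii) and (iii) demand full effective rationality rather than mere decidability. A secondary care point is the bookkeeping needed to make the edge-map coordinate changes integral affine on finitely many coset cells, so that part (iii) of that theorem applies. The bound \(k\) in hypothesis (iv) is what makes the length-zero search finite.
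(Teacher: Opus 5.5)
Your proposal is correct and follows essentially the paper's route: cyclic reduction via (i) and enumeration, then the case split of Theorem~\ref{thm:Collins}. In the elliptic case you search the finitely many edge paths of length at most \(k\); in the hyperbolic case you take finitely many cyclic alignments and intersect the pulled-back two-coset relations in \(P=\prod G_{e_i}\). Your successive sets \(S_1,\dots,S_p\) are just an iterated version of the paper's single intersection in \(\prod_{i=1}^p G_{e_i}\).

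One point needs adding, as the paper does: the identity is conjugate only to itself. Hypothesis (iv) covers only non-trivial elements, so first test \(u=1\) and \(u'=1\) using the vertex word problems, which follow from (ii) and which your enumeration of edge-group preimages already tacitly uses. Otherwise two trivial inputs whose cyclically reduced forms lie at vertices more than \(k\) apart in the quotient graph would be wrongly declared non-conjugate.
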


\begin{proof}
	Conjugacy in the vertex groups gives their word problems. Together with \textup{(i)} and the effective edge maps, this gives the usual
	reduction and cyclic-reduction algorithms: after a positive	membership test, an edge-group preimage is found by enumeration.
	Use these algorithms to put the two input elements into cyclically reduced form.
	If their edge lengths differ, apply Theorem~\ref{thm:Collins}(i). If both have length zero, first deal
	with chains of length zero using the conjugacy algorithms in the vertex groups. The identity is conjugate only to itself. For
	non-trivial elements, \textup{(iv)} and finiteness of the quotient graph leave only finitely many edge-type sequences of length at most
	\(k\). Fix one such positive-length sequence
	\[
	(v_0,e_1,v_1,\ldots,e_p,v_p)
	\]
	and introduce variables \(c_i\in G_{e_i}\). Conjoin both endpoint
	conditions
	\[
	\iota^-_{e_1}(c_1)\sim_{G_{v_0}}u,
	\qquad
	\iota^+_{e_p}(c_p)\sim_{G_{v_p}}v,
	\]
	the intermediate passage conditions
	\[
	\iota^+_{e_i}(c_i)\sim_{G_{v_i}}
	\iota^-_{e_{i+1}}(c_{i+1})
	\qquad(1\leq i<p).
	\]
	The endpoint conditions are pullbacks of the boundary sets
	\[
	\mathcal C_{\iota^-_{e_1}(G_{e_1})}(u),
	\qquad
	\mathcal C_{\iota^+_{e_p}(G_{e_p})}(v),
	\]
	while the intervening conditions are the relations
	\(\mathcal P_{E,E'}(G_{v_i})\).
	Place the edge variables in the finite direct product
	\[
	P=\prod_{i=1}^pG_{e_i}
	\]
	and pull every condition back to \(P\) by the coordinate projections and fixed edge inclusions. The resulting subsets are rational by
	\textup{(ii)}, so Theorem~\ref{thm:rational-calculus} computes their intersection and decides whether it is empty. The conjugacies in
	the local relations quantify over all possible intermediate vertex-group labels. Note that there are usually infinitely many elements of \( \pi_1(\mathcal{G}) \) whose underlying edge path has length at most \( k\), but there are finitely many such edge paths; this calculation settles the question of conjugacy for each potential conjugator with that edge path pattern and so gives a terminating algorithm.

	If the common length is positive, there are only finitely many cyclic alignments of the underlying loops. For each alignment, put all edge
	variables into their finite direct product \(P\), pull back the rational two-coset relations in \textup{(iii)}, and intersect them
	with the rational edge-identification conditions, including the final condition closing the cycle. In the equations of
	Theorem~\ref{thm:Collins}(iii), the first boundary variable appears inverted; inversion is an integral-affine cell map by
	Theorem~\ref{thm:rational-calculus}\textup{(iii)}, so this pullback preserves effective rationality.
	Theorem~\ref{thm:rational-calculus} decides whether this intersection is empty, giving a finite terminating algorithm.
\end{proof}

\begin{remark}[Acylindricity and condition \textup{(iv)}]
\label{rem:acylindrical}
We call an action on a tree \(k\)-acylindrical if the pointwise stabiliser of every segment of length greater than \(k\) is finite.
Suppose that \(\pi_1(\cG)\) is torsion-free and its Bass--Serre action is \(k\)-acylindrical, for a known \(k\). Then condition
\textup{(iv)} holds with the same \(k\). Indeed, if \(1\neq u\in G_v\) and \(u^h\in G_w\), then \(u^h\) fixes the segment
joining \(v\cdot h\) to \(w\). Were this segment longer than \(k\), its pointwise stabiliser would be finite, and hence trivial by
torsion-freeness. Thus the segment has length at most \(k\) and supplies the chain in Theorem~\ref{thm:Collins}\textup{(ii)}.
\end{remark}

\section{The canonical elementary JSJ and its suspension}
\label{sec:GL}

\subsection{The Guirardel--Levitt tree}

Let \(G\) be a torsion-free one-ended hyperbolic group. We use the canonical elementary JSJ tree \(T_{\mathrm{can}}\), realised as the
tree of cylinders of the maximal-cyclic JSJ deformation space.

\begin{theorem}[Guirardel--Levitt, \cite{GuirardelLevittCylinders}, \cite{GuirardelLevittAutomorphisms}, \cite{GuirardelLevittJSJ}]\label{thm:GL}
The group \(G\) has a canonical \(\Out(G)\)-invariant cyclic JSJ tree \(T_{\mathrm{can}}\) with the following properties.
\begin{enumerate}[(i)]
\item The quotient \(T_{\mathrm{can}}/G\) is finite.
\item The tree is bipartite. Every edge joins a non-elementary vertex to an elementary vertex.
\item Every non-elementary vertex stabiliser is either rigid or QH.
\item Every elementary vertex stabiliser is maximal infinite cyclic.
Every edge stabiliser is maximal cyclic in its non-elementary
endpoint, and the stabilisers of two distinct edges incident at
a non-elementary vertex have trivial intersection.
\item At a QH vertex, the vertex group is the fundamental group of a compact hyperbolic surface with boundary, possibly
 non-orientable, and the incident edge groups are boundary subgroups.
\item Let \(R\) be a rigid vertex group, and let \(\mathcal P_R\) be the finite family of conjugacy classes in
 \(R\) of its incident edge groups. The subgroup of
 \(\Out(R;\mathcal P_R)\) induced by automorphisms of \(G\)
 preserving the orbit of \(R\) is finite. Here \(\Out(R;\mathcal P_R)\) denotes outer automorphisms preserving
 the finite family \(\mathcal P_R\) setwise; its permutation action records the finite permutations of incident edge orbits.
\end{enumerate}
\end{theorem}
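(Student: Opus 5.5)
This statement is a compilation of Guirardel--Levitt's results, so the plan is to assemble it from the cited sources rather than prove it from scratch. The argument will rely on the standing hypotheses that \(G\) is torsion-free, one-ended and hyperbolic.

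\emph{Existence, invariance and (i).} For cyclic splittings of a one-ended hyperbolic group the JSJ deformation space \(\mathcal D\) exists, and it is \(\Out(G)\)-invariant because it is characterised intrinsically. Edge groups are cyclic and \(G\) is torsion-free hyperbolic, so commensurability of infinite cyclic subgroups is an admissible equivalence relation and yields a tree of cylinders \(T_c\) in the sense of \cite{GuirardelLevittCylinders}. This tree depends only on \(\mathcal D\), so the action of \(\Out(G)\) on \(T_{\mathrm{can}}:=T_c\) is canonical. Cocompactness passes from \(\mathcal D\) to its tree of cylinders, which gives (i).

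\emph{Bipartite structure, (ii) and (iv).} By construction, \(T_c\) has two kinds of vertices: cylinders and vertices of the original tree lying in at least two cylinders. Its edges join a cylinder to such a vertex, which gives bipartiteness. The stabiliser of a cylinder is the commensurator of an infinite cyclic subgroup. In a torsion-free hyperbolic group this commensurator is the unique maximal cyclic subgroup containing it, so it is elementary and maximal infinite cyclic. An edge stabiliser is the intersection of a maximal cyclic group \(Z\) with a non-elementary vertex group \(V\). It is therefore maximal cyclic in \(V\), since any root lying in \(V\) also lies in \(Z\). Take two distinct edges at a non-elementary vertex. If their stabilisers meet nontrivially, they lie in a common cylinder, since a nontrivial cyclic subgroup determines its commensurability class and hence its cylinder. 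Two distinct edges of \(T_c\) at the same vertex cannot both lie in that one cylinder, so this is a contradiction. The last point needs care: it is the place where the cylinder construction is used, rather than an arbitrary JSJ tree.

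\emph{Vertex types, (iii), (v) and (vi).} By \cite{GuirardelLevittJSJ}, flexible vertices of the cyclic JSJ of a one-ended torsion-free hyperbolic group are QH. In the torsion-free case such a vertex group is the fundamental group of a compact hyperbolic surface, possibly non-orientable, and its incident edge groups are exactly the boundary subgroups up to conjugacy. The remaining vertices are rigid. Passing to the tree of cylinders does not change the non-elementary vertex groups, so (iii) and (v) follow.

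For (vi) I would invoke \cite{GuirardelLevittAutomorphisms}. There, the group of automorphisms of \(G\) preserving the orbit of a rigid \(R\) induces a subgroup of \(\Out(R;\mathcal P_R)\). Because \(R\) is rigid, meaning it admits no further cyclic splitting relative to \(\mathcal P_R\), this subgroup is finite: by a Paulin/Bestvina--Feighn argument, an infinite subgroup would yield an action on an \(\mathbb R\)-tree and hence a relative cyclic splitting. I expect this finiteness, together with the trivial-intersection clause in (iv), to be the main obstacle. For both I would cite the precise statements in the sources rather than rederive them.
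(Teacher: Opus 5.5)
Your proposal is correct and takes essentially the paper's route: both assemble the statement from Guirardel--Levitt and specialise to the torsion-free hyperbolic setting, where co-elementarity of infinite cyclic edge groups is simply commensurability. The only difference is that you derive (ii) and (iv) directly from the cylinder construction and the uniqueness of maximal cyclic subgroups, and sketch the Paulin--Bestvina--Feighn argument for (vi), whereas the paper quotes Corollary~3.2 and Theorem~3.9 of Guirardel--Levitt's automorphisms paper; your direct route tacitly uses two small facts worth stating, namely that a vertex lying in two cylinders has non-cyclic stabiliser, and that boundary curves are primitive, so the \(T_{\mathrm{can}}\)-edge groups at a QH vertex are full boundary subgroups.
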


\begin{proof}[References]
Trees of cylinders and their canonicity are developed in \cite{GuirardelLevittCylinders}. In the general relatively
hyperbolic setting, the canonical tree and its vertex types are
described in \cite[\S\S2.7 and 3.3]{GuirardelLevittAutomorphisms}.
In particular, the tree of cylinders is bipartite, with elementary
cylinder vertices and non-elementary rigid or QH vertices; see
\cite[pp.~614 and 618--619]{GuirardelLevittAutomorphisms}. Point \textup{(iv)} follows from
\cite[\S3.3 and \S4.1]{GuirardelLevittAutomorphisms}. There it is shown that incident edge stabilisers at a rigid or QH vertex are maximal elementary, and
distinct adjacent maximal elementary stabilisers have finite intersection, \cite[Corollary 3.2]{GuirardelLevittAutomorphisms}. In the present torsion-free hyperbolic setting these become maximal cyclic and trivial intersection (malnormal) statements. Finiteness of the induced outer action on a rigid vertex group follows
from \cite[Theorem~3.9 and \S4.1]{GuirardelLevittAutomorphisms}.
Since \(G\) is torsion-free, the finite fibre in the general definition of a QH vertex is trivial, and its orbifold group is actually surface group (in fact a free group). See also the
systematic account \cite{GuirardelLevittJSJ} and Bowditch's canonical splitting \cite{Bowditch}.
\end{proof}

If \(T_{\mathrm{can}}\) is trivial, then \(G\) is rigid or is a closed surface group. In the rigid case \(\Out(G)\) is finite, by
the same Bestvina--Paulin--Rips argument underlying Theorem~\ref{thm:GL}(vi). Thus, if \([\phi]\) has infinite order,
the trivial-tree case is precisely the surface case already discussed in the introduction.

\subsection{The induced mapping-torus action}

Canonicity supplies a \(\phi\)-equivariant tree automorphism
\(F:T_{\mathrm{can}}\to T_{\mathrm{can}}\), satisfying
\[
F(x\cdot g)=F(x)\cdot\phi(g).
\]
We extend the \(G\)-action to \(G_\phi\) by setting
\[
x\cdot t=F(x).
\]
After barycentric subdivision, if necessary, the action has no inversions. The tree \(T_{\mathrm{can}}\) is \(G\)-minimal, and a
minimal action of a finitely generated group on a simplicial tree is cocompact \cite[I.4]{Serre}. Hence \(T_{\mathrm{can}}/G\) is finite,
and \(F\) induces a \(\Z\)-action on this finite quotient.

Let \(\sigma\) be a vertex or edge of \(T_{\mathrm{can}}\), and let \(d_\sigma>0\) be the first-return time of its \(G\)-orbit under this
\(\Z\)-action. There is \(g_\sigma\in G\) such that
\[
\sigma\cdot t^{d_\sigma}=\sigma\cdot g_\sigma,
\]
so \(s_\sigma=t^{d_\sigma}g_\sigma^{-1}\) stabilises \(\sigma\). The restriction of the height map therefore gives a split exact sequence
\[
 1\longrightarrow G_\sigma\longrightarrow
 \Stab_{G_\phi}(\sigma)
 \xrightarrow{\ \chi\ } d_\sigma\Z\longrightarrow 1.
\]
Thus every vertex and edge stabiliser is the cyclic extension
\[
\Stab_{G_\phi}(\sigma) =G_\sigma\rtimes_{\theta_\sigma}\langle s_\sigma\rangle
\]
defined by its first-return automorphism
\(\theta_\sigma(h)=h^{s_\sigma}\). Changing \(s_\sigma\) changes
\(\theta_\sigma\) only by an inner automorphism. For a vertex \(x\), we continue to write
\[
V_x=\Stab_{G_\phi}(x).
\]

\begin{proposition}\label{prop:suspended-types}
The vertex and edge stabilisers of the \(G_\phi\)-action on \(T_{\mathrm{can}}\) have the following form.
\begin{enumerate}[(i)]
\item If \(G_x\cong\Z\) is elementary, then \(V_x\) is virtually
 \(\Z^2\).
\item If \(G_x\) is rigid, then \(Z(V_x)\cong\Z\) and
 \(V_x/Z(V_x)\) is hyperbolic.
\item If \(G_x\) is QH, then \(V_x\) is the fundamental group of a
 compact fibred three-manifold.
\item Every edge stabiliser is \( \Z \rtimes \Z\), hence either a torus or a Klein bottle. In particular, it is virtually \(\Z^2\).
\end{enumerate}
\end{proposition}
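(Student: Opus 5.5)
Every case starts from the split exact sequence just established: each stabiliser is \(\Stab_{G_\phi}(\sigma)=G_\sigma\rtimes_{\theta_\sigma}\langle s_\sigma\rangle\), and the work is to identify \(\theta_\sigma\) in each case.

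\emph{Elementary vertices and edges, (i) and (iv).} By Theorem~\ref{thm:GL}(iv), \(G_x\) is infinite cyclic for an elementary vertex, and every edge stabiliser in \(G\) is also infinite cyclic. An automorphism of \(\Z=\langle c\rangle\) is either \(c\mapsto c\) or \(c\mapsto c^{-1}\). Hence the stabiliser is \(\Z\rtimes\Z\): either \(\Z^2\), a torus group, or the Klein bottle group. In both cases \(s_\sigma^2\) centralises \(G_\sigma\), so \(\langle G_\sigma,s_\sigma^2\rangle\cong\Z^2\) has index at most two. This proves (i) and (iv) simultaneously.

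\emph{Rigid vertices, (ii).} By Theorem~\ref{thm:GL}(vi), the image of \(\theta_x\) in \(\Out(R;\mathcal P_R)\) has finite order. We must check that \(\theta_x\) is induced by an automorphism of \(G\) preserving the orbit of \(R\). Indeed \(\theta_x=\ad_{g_x^{-1}}\circ\phi^{d_x}\) restricted to \(R\), and \(\phi^{d_x}\) composed with an inner automorphism is an automorphism of \(G\) carrying \(R\) to itself; it permutes the incident edge classes, so it lies in \(\Out(R;\mathcal P_R)\). A rigid vertex group is non-elementary. It is also hyperbolic, being quasiconvex: vertex groups of a cyclic splitting of a hyperbolic group are quasiconvex because edge groups are quasiconvex and the splitting is finite. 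It is torsion-free as a subgroup of \(G\). Therefore Proposition~\ref{prop:finite-outer} applies to \(V_x=R\rtimes_{\theta_x}\Z\) and gives \(Z(V_x)\cong\Z\) with hyperbolic quotient. The main point needing care is this quasiconvexity/hyperbolicity of \(R\); I would cite the standard fact that vertex groups of splittings of hyperbolic groups over quasiconvex (here cyclic) subgroups are quasiconvex, for instance as in Bowditch's work.

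\emph{QH vertices, (iii).} By Theorem~\ref{thm:GL}(v), \(G_x=\pi_1(\Sigma)\) for a compact hyperbolic surface \(\Sigma\) with boundary, possibly non-orientable, and \(\theta_x\) permutes the conjugacy classes of the boundary subgroups. These classes are exactly the incident edge groups, which are the peripheral structure; \(\theta_x\) preserves them because it preserves the set of edges at \(x\). By the Dehn--Nielsen--Baer theorem for surfaces with boundary, including the non-orientable version, an automorphism preserving the peripheral structure is induced by a homeomorphism \(f\) of \(\Sigma\). The mapping torus \(\Sigma\times[0,1]/(p,1)\sim(f(p),0)\) is a compact fibred three-manifold with fundamental group \(\pi_1(\Sigma)\rtimes_{f_*}\Z\). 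Since \(f_*\) agrees with \(\theta_x\) up to an inner automorphism, and suspensions by automorphisms differing by an inner automorphism are isomorphic, this group is \(V_x\). The obstacle in this case is only citing Dehn--Nielsen--Baer with boundary correctly: the peripheral structure is preserved because incident edge groups are boundary subgroups and distinct edges at \(x\) carry distinct boundary classes by Theorem~\ref{thm:GL}(iv).
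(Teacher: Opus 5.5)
Your proposal is correct and is essentially the paper's own proof: the first-return automorphism is \(\pm1\) on the cyclic elementary and edge groups, Theorem~\ref{thm:GL}(vi) combined with Proposition~\ref{prop:finite-outer} handles the rigid vertices, and Dehn--Nielsen--Baer followed by the mapping torus handles the QH vertices. You helpfully make explicit two things the paper leaves implicit: that \(\theta_x\) is induced by an automorphism of \(G\) preserving \(R\), and that \(R\) is hyperbolic as a quasiconvex vertex group, which Proposition~\ref{prop:finite-outer} needs. The one imprecision is in (iii): preserving the full peripheral structure needs every boundary component of the QH surface to be an incident edge. That holds by one-endedness, since an unused boundary curve would give a free splitting; it is not the injectivity over \(G_x\)-orbits of edges that Theorem~\ref{thm:GL}(iv) provides.
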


\begin{proof}
Apply the preceding first-return description. For an elementary vertex or an edge, \(G_\sigma\cong\Z\), and its first-return
automorphism is the identity or inversion. This gives (i) and (iv).

If \(G_x=R\) is rigid, Theorem~\ref{thm:GL}(vi) says that the first-return automorphism has finite order in \(\Out(R)\).
Proposition~\ref{prop:finite-outer} applied to \(R\) and this automorphism gives (ii).

If \(G_x=\pi_1(S)\) is QH, the first-return outer automorphism preserves the peripheral conjugacy classes, allowing permutation of
boundary components and inversion of their cyclic generators. The
type-preserving Dehn--Nielsen--Baer theorem for compact surfaces with boundary, including the non-orientable case, realises it by a
homeomorphism of \(S\); see \cite[\S3]{Fujiwara} and \cite[Theorem~1]{MaclachlanHarvey}. Its cyclic extension is the
fundamental group of the mapping torus of that homeomorphism, proving
(iii).
\end{proof}

\section{Refining suspended QH vertices}\label{sec:refinement}
\subsection{The equivariant refinement}

Let \(x\) be a QH vertex and write \(G_x=\pi_1(S)\), where \(S\) is a compact hyperbolic surface with boundary, not necessarily orientable.
The first-return automorphism determines a mapping class preserving the boundary components up to permutation. Before describing its
Nielsen--Thurston reduction, we illustrate the issue caused by a one-sided reduction curve.


\begin{example}[A one-sided reduction curve]
	\label{ex:mobius-extension}
	Let \(\Sigma\) be orientable and let \(f:\Sigma\to\Sigma\) be an
	orientation-preserving pseudo-Anosov homeomorphism fixing a boundary component pointwise. Attach a M\"obius band \(N\) along that component
	and extend \(f\) over \(N\) by the identity. The core \(c\) of \(N\) is then an invariant one-sided reduction curve.
	
	In the orientation double cover, the two lifts of \(\Sigma\) are joined by the annular lift of \(N\); the core of this annulus is an
	invariant essential two-sided curve. Thus the lifted monodromy is reducible, and the orientation double cover of the mapping torus has
	a non-trivial JSJ torus separating two pseudo-Anosov pieces. Cutting along \(\partial N\) isolates the M\"obius-band suspension as an
	elementary piece, with its boundary subgroup of index two.
\end{example}

\begin{lemma}[The surface reduction tree]\label{lem:surface-tree}
	Let \(x\) be a QH vertex of \(T_{\mathrm{can}}\), identify \(G_x=\pi_1(S)\), where \(S\) is a compact hyperbolic surface with boundary, possibly non-orientable, and let
	\[
	\theta_x(h)=h^{s_x}
	\]
	be the first-return automorphism defined above. Let \([f]\) be the mapping class corresponding to \([\theta_x]\) under the
	type-preserving Dehn--Nielsen--Baer theorem.
	
	There is a finite canonical reduction system \(\mathcal R_x=\mathcal R(f)\) and a representative \(f:S\to S\)
	such that \(f(\mathcal R_x)=\mathcal R_x\). Write \(\mathcal R_x^{(1)}\) and \(\mathcal R_x^{(2)}\) for its
	one-sided and two-sided subcollections. Choose disjoint M\"obius neighbourhoods \(N(c)\), equivariantly over
	\(\mathcal R_x^{(1)}\), and put
	\[
	\mathcal D_x=\mathcal R_x^{(2)}
	\cup\{\partial N(c):c\in\mathcal R_x^{(1)}\}.
	\]
	Thus \(\mathcal D_x\) is an \(f\)-invariant two-sided multicurve. Cutting along \(\mathcal D_x\) gives a finite graph of groups
	\(\Gamma_x\) with fundamental group \(\pi_1(S)\). A curve in \(\mathcal R_x^{(2)}\) gives the
	usual cyclic edge, while \(N(c)\) gives an elementary vertex
	\(\langle c\rangle\) and its boundary \(d=\partial N(c)\) gives the
	edge map
	\[
	d\longmapsto c^2. 
	\]
	Let \(T_x\) be the Bass--Serre tree of \(\Gamma_x\). The original mapping class induces an automorphism \(F_x:T_x\to T_x\) satisfying
	\[
	F_x(u\cdot g)=F_x(u)\cdot\theta_x(g)
	\qquad(g\in G_x,\ u\in T_x).
	\]
	The first-return mapping class on every non-elementary component orbit is periodic or pseudo-Anosov.
\end{lemma}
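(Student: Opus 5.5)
We assemble the lemma from three standard inputs: the canonical reduction system for possibly non-orientable surfaces, an equivariant choice of Möbius neighbourhoods, and the Bass--Serre dictionary between invariant multicurves and trees with an induced twisted automorphism.

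\emph{Step 1: the reduction system.} We realise \([\theta_x]\) by a homeomorphism via the type-preserving Dehn--Nielsen--Baer theorem, exactly as in Proposition~\ref{prop:suspended-types}(iii).
\begin{itemize}
\item If \(S\) is orientable, the canonical reduction system \(\mathcal R(f)\) of Birman--Lubotzky--McCarthy/Ivanov is defined, and it is an invariant of the mapping class \([f]\).
\item If \(S\) is non-orientable, we take the canonical reduction system of \cite{ParisNonorientable,Wu}. It may contain one-sided curves. Equivalently, one can lift to the orientation double cover \(\tilde S\) and take the deck-invariant canonical system there; its image in \(S\) is \(\mathcal R(f)\).
\end{itemize}
Canonicity gives \(f(\mathcal R_x)\simeq\mathcal R_x\) up to isotopy. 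We then isotope \(f\), for instance by realising the curves as geodesics in a hyperbolic metric and using that isotopic disjoint multicurves can be simultaneously straightened, so that \(f(\mathcal R_x)=\mathcal R_x\) on the nose.

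\emph{Step 2: the multicurve \(\mathcal D_x\).} Choose \(\varepsilon\)-regular neighbourhoods of the one-sided curves in \(\mathcal R_x^{(1)}\). These are Möbius bands, pairwise disjoint and disjoint from \(\mathcal R_x^{(2)}\). A further isotopy of \(f\) supported near \(\mathcal R_x\) makes \(f\) permute these neighbourhoods; this is possible because any two regular neighbourhoods of a curve are ambiently isotopic rel the curve. Then \(\mathcal D_x\) is \(f\)-invariant. Each \(\partial N(c)\) is two-sided, being the boundary of a subsurface, and it is essential and non-peripheral because \(S\) is hyperbolic and \(S\setminus N(c)\) is not an annulus. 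We must check that \(\partial N(c)\) is not isotopic to another curve of \(\mathcal D_x\). If it were, the piece between them would be an annulus, so the two one-sided curves would cobound a Klein bottle minus a disc with essential boundary. I would check that canonical systems cannot contain such a configuration, or otherwise identify the duplicate edges.

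\emph{Step 3: the graph of groups and the tree.} Cutting \(S\) along \(\mathcal D_x\) and applying van Kampen gives \(\Gamma_x\):
\begin{itemize}
\item each complementary component gives a vertex;
\item each curve gives a cyclic edge;
\item for a Möbius band, \(\pi_1 N(c)=\langle c\rangle\) and the boundary curve is homotopic to \(c^2\), giving the stated edge map.
\end{itemize}
The map \(f\) preserves the decomposition, so it induces an automorphism of \(\Gamma_x\) twisted by \(f_*=\theta_x\), up to the inner ambiguity. Lifting to universal covers, \(\tilde f\) acts on the dual tree \(T_x\). This is the tree whose vertices are lifts of components and whose edges are lifts of curves of \(\mathcal D_x\). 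The resulting \(F_x\) satisfies the stated twisted equivariance once we choose the lift \(\tilde f\) inducing \(\theta_x\) exactly, not merely up to conjugacy. That choice is possible because \(\theta_x\) is a genuine automorphism realised by a based homotopy equivalence.

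\emph{Step 4: the final assertion.} The complementary components consist of the Möbius bands and the components of \(S\setminus\mathcal R_x\), the latter truncated by removing the Möbius bands, which does not change their homotopy type. The Möbius bands are elementary. By the defining property of the canonical reduction system, the first-return map on each remaining non-elementary component is periodic or pseudo-Anosov; in the non-orientable case this is the content of \cite{Wu,ParisNonorientable}.

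The main obstacle is the non-orientable canonicity together with the non-degeneracy check in Step 2. I would try to deduce both from the orientation double cover, using uniqueness of the canonical system upstairs and its invariance under the deck involution.
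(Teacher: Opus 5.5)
Your proposal is correct and takes essentially the same route as the paper. Both use the Paris/Wu canonical system via the orientation double cover, sidedness-preserving invariance to choose the M\"obius neighbourhoods equivariantly, cutting along \(\mathcal D_x\), and exact \(\theta_x\)-equivariance obtained by choosing the right lift of \(f\), which is what the paper means by composing with the action of a suitable element of \(G_x\). The non-isotopy check you leave open is not addressed in the paper either, is not needed for the graph-of-groups conclusions, and is easy anyway: with your double-cover construction the curves of \(\mathcal R_x^{(2)}\) are simple closed geodesics and so represent primitive classes, whereas \(\partial N(c)\) represents the proper power \(c^2\); and \(\partial N(c)\) parallel to \(\partial N(c')\) would make \(S\) a closed Klein bottle, not the configuration you describe.
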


\begin{proof}
The non-orientable Nielsen--Thurston theorem follows from the orientable double cover; see
\cite[Theorem~25 and Propositions~27--29]{ParisNonorientable}, following Wu's canonical-reduction theorem \cite{Wu}. The canonical
system is preserved setwise by the unpowered mapping class. Homeomorphisms preserve sidedness, so
\(\mathcal R_x^{(1)}\) and \(\mathcal R_x^{(2)}\) are separately invariant and the neighbourhoods \(N(c)\) may be chosen equivariantly.

For each \(c\in\mathcal R_x^{(1)}\), cutting along \(d=\partial N(c)\) separates the M\"obius band \(N(c)\), with edge
map \(d\mapsto c^2\). The remaining non-elementary component first returns are periodic or pseudo-Anosov. A normal-form representative
permutes the components of this decomposition and therefore induces the asserted graph-of-groups and Bass--Serre-tree automorphisms.

The induced tree automorphism is equivariant with an automorphism representing \([\theta_x]\); composing it with the action of a suitable
element of \(G_x\) gives the stated \(\theta_x\)-equivariance. No purity assumption is required, since the representative may permute
components and curves. 

\end{proof}

\begin{remark}[Peripheral root extensions]
	\label{rem:root extensions}
		One could instead cut only along the two-sided curves in
	\(\mathcal R_x\); 
	\(\mathcal R_x^{(2)}\). Absorbing a M\"obius-band suspension \(A\) into an
	adjacent non-elementary piece \(V\) replaces \(V\) by
	\[
	V*_E A,
	\]
	where \(E\) is the boundary suspension and \([A:E]=2\). If \(V\) is
	pseudo-Anosov, then \(E\) is a full cusp subgroup, and
	\cite[Theorem~0.1\textup{(2)}]{DahmaniCombination} shows that
	\(V*_E A\) is hyperbolic relative to \(A\) and the remaining cusp groups. The analogous extension of a periodic piece is again
	finite-outer. Thus the later arguments could be reformulated using these finite peripheral root extensions. We instead cut along
	\(\partial N(c)\) in order to separate them into elementary vertices and retain the standard hyperbolic and Seifert-fibred pieces. This is largely to make 
	the references below apply more directly. It also means that our vertex groups are of familiar type, rather than root extensions of familiar type. 
\end{remark}

\begin{theorem}[Full-equivariant QH refinement]
\label{thm:pA-refinement}
The canonical tree admits a \(G_\phi\)-equivariant refinement
\[
T_{\mathrm{ref}}\longrightarrow T_{\mathrm{can}}
\]
obtained by blowing up every QH vertex along the cyclic splitting
dual to \(\mathcal D_x\), with the following properties:
\begin{enumerate}[(i)]
\item The \(G\)-action on \(T_{\mathrm{ref}}\) is minimal,
\(T_{\mathrm{ref}}/G\) is finite and, after barycentric
 subdivision, \(T_{\mathrm{ref}}\) is bipartite and the action
 has no inversions;
\item every edge stabiliser and every elementary vertex stabiliser is
 virtually \(\Z^2\);
\item every non-elementary vertex stabiliser is either a
pseudo-Anosov surface-by-\(\Z\) group or a finite-outer
hyperbolic-by-\(\Z\) group. At a pseudo-Anosov vertex, every
incident edge stabiliser is a full cusp subgroup of the vertex
stabiliser; 
\item no non-trivial element of \(G\) fixes two distinct edges
 incident at a non-elementary vertex.
\end{enumerate}
\end{theorem}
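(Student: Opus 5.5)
The plan is to build \(T_{\mathrm{ref}}\) by the standard equivariant blow-up of vertices. I will then read off the properties from Theorem~\ref{thm:GL}, Lemma~\ref{lem:surface-tree} and Proposition~\ref{prop:finite-outer}.

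\emph{Construction.} Fix one representative \(x\) in each \(G_\phi\)-orbit of QH vertices of \(T_{\mathrm{can}}\). Note that \(G_\phi\)-orbits may be unions of several \(G\)-orbits, permuted by \(t\). Lemma~\ref{lem:surface-tree} supplies the tree \(T_x\) with a \(G_x\)-action and the \(\theta_x\)-equivariant automorphism \(F_x\). Together these define an action of \(V_x=G_x\rtimes_{\theta_x}\langle s_x\rangle\) on \(T_x\) by letting \(s_x\) act as \(F_x\). Each edge \(e\) of \(T_{\mathrm{can}}\) at \(x\) has stabiliser a boundary subgroup of \(\pi_1(S)\), by Theorem~\ref{thm:GL}(v). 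Since every curve of \(\mathcal D_x\) is disjoint from \(\partial S\), this subgroup fixes a unique vertex of \(T_x\): the complementary piece containing that boundary component. Uniqueness holds because boundary curves are not parallel to curves of \(\mathcal D_x\); the reduction system consists of essential, non-peripheral curves. This gives a \(V_x\)-equivariant attaching map from edges at \(x\) to \(T_x\), equivariant for \(\Stab_{V_x}(e)\) because of uniqueness. Now induce from \(V_x\) up to \(G_\phi\), replacing each translate \(x\cdot g\) by \(T_x\cdot g\), and reattach the edges. This is Guirardel--Levitt-style blowing up. It yields a \(G_\phi\)-tree \(T_{\mathrm{ref}}\) with an equivariant collapse map to \(T_{\mathrm{can}}\). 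Well-definedness needs only that \(V_x\) is exactly the stabiliser of \(x\) and that the attaching data are \(V_x\)-equivariant, which we have arranged.

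\emph{Properties.} For (i), quotients are finite because \(T_{\mathrm{can}}/G\) and each \(\Gamma_x\) are finite. Minimality holds because every edge of \(T_x\) separates the boundary-carrying pieces, as the curves are essential. It also follows from the \(G\)-action having no proper invariant subtree, since each piece contains an attached boundary edge or is a non-elementary piece whose group is not contained in an edge group. We obtain bipartiteness with no inversions by barycentrically subdividing, declaring elementary vertices (cyclic cylinders and M\"obius cores \(\langle c\rangle\)) on one side and putting midpoints on the new cyclic edges. For (ii), every edge and elementary vertex has \(G\)-stabiliser \(\Z\) (cyclic, torsion-free), so as in Proposition~\ref{prop:suspended-types}(i),(iv) its \(G_\phi\)-stabiliser is \(\Z\rtimes\Z\). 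For (iii), the non-elementary vertices are either rigid, handled by Proposition~\ref{prop:suspended-types}(ii) and Proposition~\ref{prop:finite-outer}, or components of \(S\setminus\mathcal D_x\). The latter have first return periodic, giving finite-outer, or pseudo-Anosov, giving a surface-by-\(\Z\) group which is a hyperbolic 3-manifold with torus/Klein-bottle cusps. Here the incident edge groups are the full suspensions of the boundary curves, i.e.\ maximal parabolic (cusp) subgroups. Fullness holds because the stabiliser in \(V\) of a boundary edge is the full normaliser of that boundary cyclic group.

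For (iv) there are two cases. At rigid vertices, (iv) is Theorem~\ref{thm:GL}(iv). At a surface piece, distinct incident edges correspond to distinct boundary components or distinct lifts of the same one. In \(\pi_1\) of a hyperbolic surface with boundary, distinct peripheral subgroups, and distinct conjugates of a peripheral subgroup, intersect trivially (malnormality of boundary subgroups). The main obstacle is the careful equivariance bookkeeping when \(t\) permutes QH orbits and pieces and reverses annular orientations. I would handle it by working only with first-return maps on orbit representatives, as above, so that no purity or power of \(\phi\) is ever needed.
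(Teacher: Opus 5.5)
Your proposal is correct and follows essentially the same route as the paper: the \(V_x\)-action on \(T_x\) from Lemma~\ref{lem:surface-tree} is transported along the \(G_\phi\)-orbit and glued in by the standard tree-of-actions construction, and (i)--(iv) are then read off from Theorem~\ref{thm:GL}, the first-return description and malnormality of boundary subgroups. Your identification \(M_e=N_V(G_e)\) for cusp fullness and your uniqueness argument for the attaching vertex are slightly more explicit than the paper's; note, however, that the curves \(\partial N(c)\) are not themselves reduction curves, so their non-peripherality needs its own one-line justification: otherwise \(S\) would be a M\"obius band, which is not hyperbolic.
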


\begin{proof}
Fix a QH vertex \(x\) and use the \(G_x\)-tree \(T_x\) supplied by Lemma~\ref{lem:surface-tree}. The equivariance relation in that
lemma says exactly that the full vertex stabiliser
\[
  V_x=G_x\rtimes_{\theta_x}\langle s_x\rangle
\]
acts on \(T_x\), with \(g^{s_x}=\theta_x(g)\). Purity is not
needed: \(s_x\) simply permutes
component and curve vertices.

For every \(y=x\cdot m\) in the \(G_\phi\)-orbit of \(x\), transport
the tree \(T_x\), and hence its stabilisers, by right conjugation with
\(m\), so \(H\leq V_x\) is transported to \(H^m\). This is
independent of the choice of \(m\), since two choices differ by an
element of \(V_x\). Replacing each QH vertex by its transported tree
and attaching the old edge groups at their fixed vertices is the
standard tree-of-actions construction, and gives a \(G_\phi\)-tree.

The quotient \(T_{\mathrm{can}}/G\) is finite, and each quotient
\(T_x/G_x\) is the finite graph dual to \(\mathcal D_x\). Hence
\(T_{\mathrm{ref}}/G\) is finite.
The action is minimal: \(T_{\mathrm{can}}\) is \(G\)-minimal, and
each surface tree \(T_x\) is \(G_x\)-minimal, so the tree-of-actions
construction is \(G\)-minimal.

The stabiliser of a non-elementary component lift is the suspension of
its periodic or pseudo-Anosov first return. The surviving rigid
vertices are finite-outer by Theorem~\ref{thm:GL}\textup{(vi)}.

On every cyclic curve group first return acts by \(+1\) or \(-1\), so
its suspension is virtually \(\Z^2\). For
\(d=\partial N(c)\), the suspended inclusion into the elementary
M\"obius-band vertex retains the index two. At a pseudo-Anosov component, an incident edge stabiliser is the suspension
of the full stabiliser of the corresponding boundary-component orbit,
and hence is the full cusp subgroup of the mapping-torus group. Thus
the one-sided modification does not replace the cusp group at the pseudo-Anosov endpoint by a proper finite-index subgroup. Together
with Proposition~\ref{prop:suspended-types}\textup{(i)}, this proves
\textup{(ii)} and \textup{(iii)}. A barycentric subdivision removes
inversions and makes the refined tree bipartite.

Finally, at a surviving rigid vertex, part~\textup{(iv)} follows from Theorem~\ref{thm:GL}~\textup{(iv)}. At a surface-component vertex, incident
\(G\)-edge stabilisers are boundary subgroups of the component. These are maximal cyclic, and distinct boundary subgroups or distinct
conjugates of one have trivial intersection. This proves \textup{(iv)}.

Dahmani--Krishna perform essentially this refinement after passing to
a power which makes the mapping classes pure
\cite[\S2.4.2, especially pp.~407--408]{DahmaniKrishna}. The
argument above records why that convenience is unnecessary for the
full mapping-torus action.
\end{proof}

\section{Finite-outer folds and the block tree}
\label{sec:blocks}

The action of \(G_\phi\) on \(T_{\mathrm{ref}}\) need not be
acylindrical. In particular, this occurs if \( \phi \) has finite order as an outer automorphism, since then \( G_{\phi}\) has a centre which must act trivially on the tree. We are assuming that \( \phi\) has infinite order, but it can still happen that \( \phi \) has finite order on large `blocks' of \(T_{\mathrm{ref}}\). We already know that \( \phi\) has finite order on rigid suspensions and Seifert fibred suspensions; what can happen is that \( \phi \) may induce a finite order outer automorphism on the subgroup generated by two or more of these suspensions, and thus the corresponding central element will have a large fixed subtree. 

In fact, this is a local phenomenon: it occurs when two non-elementary pieces share a non-trivial central element along a common elementary piece. Our solution then is to fold the corresponding edges of \(T_{\mathrm{ref}}\) and iterate. When no more folds are possible, we show that the resulting tree is acylindrical. Since \(G_\phi\) is torsion-free,
Remark~\ref{rem:acylindrical} explains why this is the relevant
target, as it supplies condition~\textup{(iv)} of
Proposition~\ref{prop:abstract-preaux}.

\subsection{The block-reduction theorem}

The goal of the section is the following theorem.

\begin{theorem}[Block reduction]\label{thm:block-reduction}
There is a surjective simplicial \(G_\phi\)-map
\[
\beta:T_{\mathrm{ref}}\longrightarrow T_{\mathrm{blk}}
\]
with the following properties.
\begin{enumerate}[label=\textup{(\roman*)},ref=\textup{(\roman*)}]
\item The map \(\beta\) is a finite composite of full equivariant
 folds followed by the collapse onto the minimal subtree. Every edge
 of \(T_{\mathrm{ref}}\) maps either linearly onto an edge or to a
 vertex.
\item Both \(G\) and \(G_\phi\) act minimally and cocompactly on
 \(T_{\mathrm{blk}}\), and the \(G_\phi\)-action is
 four-acylindrical.
\item The tree \(T_{\mathrm{blk}}\) is bipartite, with elementary and
 non-elementary vertices. Every edge stabiliser has finite
 index in its adjacent elementary vertex stabiliser; in
 particular, both are virtually \(\Z^2\).
\item Every vertex is of exactly one of the following three types:
 \begin{enumerate}[label=\textup{(\alph*)}]
 \item an \emph{elementary vertex}, whose stabiliser is virtually
   \(\Z^2\);
  \item a \emph{pseudo-Anosov vertex}, whose stabiliser is
   \(\pi_1(S)\rtimes_\psi\Z\), where \(S\) is a compact
   hyperbolic surface and \(\psi\) is pseudo-Anosov;
  \item a \emph{finite-outer vertex}, whose stabiliser is
   \(H\rtimes_\theta\Z\), where \(H\) is non-elementary
   torsion-free hyperbolic and \([\theta]\) has finite order
   in \(\Out(H)\).
  \end{enumerate}
\item The star of every pseudo-Anosov vertex, including all its
  stabilisers, is unchanged from \(T_{\mathrm{ref}}\).
\item\label{item:block-local-G}
  If
  \(e_1,e_2\) are distinct edges incident at a non-elementary
  vertex, then
  \[
  G_{e_1}\cap G_{e_2}=1.
  \]
\end{enumerate}
\end{theorem}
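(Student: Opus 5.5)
The plan is to build \(T_{\mathrm{blk}}\) by an explicit, finite sequence of equivariant folds at elementary vertices of \(T_{\mathrm{ref}}\), each fold amalgamating finite-outer suspensions that share a central element. The key local criterion I would establish first is this: for an elementary vertex \(y\) and two incident edges \(e_1,e_2\) whose non-elementary endpoints \(x_1,x_2\) are finite-outer, the centres \(Z(V_{x_1})\) and \(Z(V_{x_2})\) (infinite cyclic of non-zero height by Proposition~\ref{prop:finite-outer}) either have commensurable non-trivial intersections with \(V_y\), or they do not. The intersection of a central element with an edge stabiliser lies in the virtually \(\Z^2\) edge group. So commensurability is a decidable condition on finitely many fixed elements of the virtually abelian group \(V_y\). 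It is \(G_\phi\)-invariant and involves only finitely many edge orbits.

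Whenever the condition holds, I would equivariantly fold the two edges \([y,x_1],[y,x_2]\), identifying \(x_1\) with \(x_2\). The new vertex stabiliser is the amalgam \(V_{x_1}*_{V_{e}}V_{x_2}\), after absorbing \(V_y\) and re-folding any edges now carrying the same central element. I then verify that the result is again finite-outer. It contains a central element \(z\) of non-zero height, namely a common power of the two centres. Its intersection with \(G\) is the fundamental group \(H\) of the corresponding graph of groups of hyperbolic groups, amalgamated along maximal cyclic malnormal subgroups by Theorem~\ref{thm:GL}(iv) and Theorem~\ref{thm:pA-refinement}(iv). By the Bestvina--Feighn combination theorem, \(H\) is hyperbolic and non-elementary. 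Moreover \(\ad_z\) is trivial on \(H\), so the monodromy has finite order in \(\Out(H)\). Pseudo-Anosov vertices are never folded, because their stabilisers have trivial centre and every incident edge group is a full cusp subgroup. This gives (v) and the type list (iv). Each fold strictly decreases the number of non-elementary vertex orbits, so the process terminates. Afterwards I collapse onto the minimal subtree, subdivide to restore bipartiteness, and note that minimality and cocompactness for \(G\) pass through folds and collapses, since \(T_{\mathrm{ref}}/G\) is finite. This gives (i)–(iii). The finite-index claim in (iii) comes from the fact that an elementary vertex group and its incident edge suspensions are all virtually \(\Z^2\) with the same \(G\)-cyclic part up to index two.

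For (vi), distinct edges at a non-elementary vertex of \(T_{\mathrm{blk}}\) have \(G\)-stabilisers that are boundary subgroups of distinct original pieces, or of the same piece. Malnormality of these subgroups in the combined hyperbolic group \(H\), which follows from the acylindricity of the amalgam, gives trivial intersection.

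The main obstacle will be four-acylindricity. I would argue as follows. Let \(1\neq g\in G_\phi\) fix a segment of length \(5\). The segment passes through an interior non-elementary vertex flanked by two elementary vertices. First suppose that vertex is pseudo-Anosov. By (vi), \(g\notin G\), and then the two cusp subgroups of the mapping torus meet trivially, since the corresponding boundary subgroups are distinct. This is a contradiction. Next suppose the vertex is finite-outer. Passing to the \(G\)-part of a power, (vi) forces \(g^n\) into the centre of that vertex group. Now \(g\) also fixes the next non-elementary vertex along the segment, and the same argument puts a power of \(g\) into its centre as well. These two central elements share a power across the intervening elementary vertex. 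That is exactly the fold condition, which is excluded at termination. Torsion-freeness then removes the need for powers. The delicate points are making the power argument rigorous, showing that an element normalising both cyclic \(G\)-edge groups with trivial intersection is central, and ruling out the case where \(g\) fixes an elementary vertex via an orientation-reversing element.
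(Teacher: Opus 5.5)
Your overall route is the paper's. You fold pairs of finite-outer neighbours of an elementary vertex whose centres meet, and a common central element of non-zero height shows the new vertex is finite-outer. You stop when no such pair remains, pass to the minimal subtree, and get four-acylindricity from height-zero elements \(g^{m}z_v^{-n}\in G_v\) together with terminality. (Proving hyperbolicity of \(H\) by an acylindrical combination theorem, rather than by Bowditch's quasiconvexity of vertex groups of cyclic splittings, is a legitimate variant.)

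\textbf{The gap.} You treat every fold as merging two different vertices into an amalgam \(V_{x_1}*_{V_e}V_{x_2}\), and you prove termination by counting non-elementary vertex orbits. But an admissible fold can identify \(x_1\) with one of its own translates, and this is forced by the construction. Suppose an incident edge group is \(G_{e_1}=\langle c^k\rangle\) with \(k>1\) inside the cylinder group \(G_y=\langle c\rangle\), which occurs in trees of cylinders, and suppose \(x_1\) is finite-outer. Then a generator \(z\) of \(Z(V_{x_1})\) lies in \(V_{e_1}\leq V_y\) and commutes with \(c^k\). It therefore commutes with \(c\), because \(V_y\) is a torus or Klein-bottle group. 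So \(e_1\neq e_1\cdot c\) and \(z\in Z_{x_1}\cap Z_{x_1\cdot c}\), giving an admissible pair inside a single \(G\)-orbit. Folding it produces the stabiliser \(\langle V_{x_1},c\rangle\) and merges no vertex orbits, for \(G\) or for \(G_\phi\). So your termination argument fails, and your centrality claim does not cover the extra generator.

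\textbf{The repair} is the paper's. If the fold adjoins \(g\) with \(x_2=x_1\cdot g\), then \(z\) and \(z^g\) both lie in the infinite cyclic group \(Z_{x_2}=Z_{x_1}^g\) and have the same height, so \(z^g=z\) by injectivity of \(\chi\) there. For termination, use \(\kappa(T)=\sum_{[a]}\operatorname{val}_T(a)\) over elementary vertex orbits. This is finite because edge stabilisers have finite index in elementary ones. It strictly drops under every full fold, since either two edge orbits at \(a\) merge or \([V_a:V_e]<[V_a:V_{e_1}]\).

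\textbf{Smaller points to fix.}
\begin{enumerate}
\item The elementary vertex \(y\) must survive the fold, with \(V_y\) unchanged and new edge stabiliser \(\langle V_{e_1},V_{e_2}\rangle\) (or \(\langle V_{e_1},g\rangle\)) of finite index in it. Absorbing \(V_y\) would attach pseudo-Anosov neighbours of \(y\) directly to a non-elementary vertex, destroying bipartiteness and (v). It could also destroy centrality of \(z\) when \(V_y\) is a Klein-bottle group.
\item Folds at elementary vertices keep the tree bipartite, so no subdivision is needed.
\item The finite index in (iii) is not ``up to index two''. It follows from \(V_{e_1}\leq V_e\leq V_y\), all three being virtually \(\Z^2\).
\item For (v) you must check that passing to the minimal subtree deletes only valence-one elementary vertices, none of them adjacent to a pseudo-Anosov vertex. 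Non-elementary vertices survive because \(G\)-edge stabilisers are cyclic.
\item For (vi), replace the appeal to malnormality in \(H\) with the concrete pull-back argument. A uniform power of \(h\) fixes preimages of both edges in the previous tree. Folds are injective on elementary vertices, so that power fixes a path through an old non-elementary vertex with two fixed incident edges.
\item Your ``delicate point'' in the acylindricity argument disappears once you note \(Z_v\leq V_e\) for every incident edge \(e\). This holds because \(G_{e\cdot z}=G_e^z=G_e\neq1\), together with (vi). Hence \(g^mz_v^{-n}\in G_v\) fixes both edges and is trivial.
\end{enumerate}
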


The proof separates into three steps. We define intermediate trees and admissible folds, analyse one such fold, and then iterate until
none remains.
Four-acylindricity is a consequence of this terminality.

\subsection{Intermediate trees and admissible folds}

\begin{definition}[Intermediate tree]\label{def:intermediate-tree}
An \emph{intermediate tree} is a pair \((q,T)\), where
\[
     q:T_{\mathrm{ref}}\longrightarrow T
\]
is a surjective simplicial \(G_\phi\)-map which is a finite composite
of full equivariant folds and maps every edge of \(T_{\mathrm{ref}}\)
linearly onto an edge. We also require conclusions
\textup{(iii)--(vi)} of Theorem~\ref{thm:block-reduction} and the
cocompactness assertion in \textup{(ii)} to hold with \(T\) in place
of \(T_{\mathrm{blk}}\). Thus an intermediate tree has every property
required except, potentially, 
four-acylindricity and minimality. 
\end{definition}

For an intermediate tree \((q,T)\) and a vertex or edge \(\sigma\)
of \(T\), put
\[
 M_\sigma=\Stab_{G_\phi}(\sigma),\qquad
 G_\sigma=M_\sigma\cap G=\Stab_G(\sigma).
\]
Call a non-elementary vertex \(v\) \emph{finite-outer} if
\(G_v\) is hyperbolic and the first-return monodromy of
\(M_v\) on \(G_v\) has finite order in \(\Out(G_v)\); in other words \( v\) is called finite-outer when \( M_v\) is finite-outer, Definition~\ref{def: finite outer}. Put
\[
Z_v=Z(M_v).
\]
By Proposition~\ref{prop:finite-outer}, \(Z_v\) is infinite cyclic, the restriction of the height map \(\chi\) to \(Z_v\) is injective
and non-zero, and \(M_v/Z_v\) is hyperbolic. We use these standard consequences of finite-outerness without further comment.

The pair \((\mathrm{id},T_{\mathrm{ref}})\) is an intermediate tree.
Indeed, the identity map is the empty composite of full folds.
Theorem~\ref{thm:pA-refinement} gives the bipartition, the edge and vertex types, and its part~\textup{(iv)} gives
condition~\ref{item:block-local-G}. The edge and adjacent elementary vertex stabilisers are both virtually \(\Z^2\), so the edge inclusion
has finite-index image. The theorem also gives cocompactness of the \(G\)-action;
hence the \(G_\phi\)-action is cocompact as well. The assertions concerning pseudo-Anosov stars are tautological for the identity map.
Thus all the conditions of Definition~\ref{def:intermediate-tree} hold.

The following useful facts are automatic for every intermediate
tree.

\begin{lemma}[Suspensions and central incidence]
\label{lem:automatic-stabilisers}
Let \((q,T)\) be an intermediate tree.
\begin{enumerate}[label=\textup{(\roman*)}]
\item For every vertex or edge \(\sigma\) of \(T\), the stabiliser
  \(M_\sigma\) is a suspension of \(G_\sigma\).
\item If \(v\) is finite-outer and \(e\) is incident at \(v\), then
  \[
  Z_v\leq M_e,
  \]
  and the image of \(M_e\) in \(M_v/Z_v\) is virtually cyclic.
\end{enumerate}
\end{lemma}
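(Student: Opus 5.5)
For part (i) I would argue directly from the height homomorphism. The subgroup \(G\) is normal in \(G_\phi\), so \(G_\sigma=M_\sigma\cap G\) is normal in \(M_\sigma\), and \(\chi\) restricted to \(M_\sigma\) has kernel \(G_\sigma\). It then suffices to show that \(\chi(M_\sigma)\) is non-zero. Its image is then \(d\Z\) for some \(d>0\), and any \(s\in M_\sigma\) with \(\chi(s)=d\) splits the sequence, giving \(M_\sigma=G_\sigma\rtimes\langle s\rangle\).

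For non-triviality I would use cocompactness of the \(G\)-action on \(T\), which is part of the definition of an intermediate tree. The map \(t\) permutes the finitely many \(G\)-orbits of cells of \(T\). So some power \(t^d\) with \(d>0\) sends \(\sigma\) into its own \(G\)-orbit, say \(\sigma\cdot t^d=\sigma\cdot g\). Then \(t^dg^{-1}\in M_\sigma\) has height \(d\). This is exactly the first-return argument of Section~\ref{sec:GL}, now applied to \(T\) instead of \(T_{\mathrm{can}}\).

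For part (ii), let \(v\) be finite-outer, so \(M_v=G_v\rtimes\langle s\rangle\) with \(Z_v=\langle z\rangle\), where \(z=s^ma^{-1}\) as in Proposition~\ref{prop:finite-outer}. The edge \(e\) is incident at \(v\), so \(M_e\le M_v\), and \(z\) normalises \(M_e\). I would show that \(z\) actually fixes \(e\). Since \(z\) is central in \(M_v\), it commutes with \(G_e\), so \(G_e=G_e^z=\Stab_G(e\cdot z)\). By part (i), \(M_e\) contains an element \(s_e\) of positive height, and \(z\) commutes with it as well. Thus \(G_e\) fixes both \(e\) and \(e\cdot z\), and both edges are incident at \(v\), because \(z\) fixes \(v\).

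To conclude \(e\cdot z=e\), I would appeal to condition \ref{item:block-local-G}: distinct edges at a non-elementary vertex have trivially intersecting \(G\)-stabilisers. This requires \(G_e\neq 1\). That holds because every edge stabiliser is virtually \(\Z^2\) and is a suspension of \(G_e\), so \(G_e\) is infinite cyclic. Hence \(e\cdot z=e\) and \(Z_v\le M_e\).

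It remains to show that the image of \(M_e\) in \(M_v/Z_v\) is virtually cyclic. Here \(M_e\) is virtually \(\Z^2\) and contains \(Z_v\cong\Z\). The quotient \(M_e/Z_v\) is therefore virtually cyclic, and it maps onto the image of \(M_e\) in \(M_v/Z_v\), which is thus virtually cyclic too. (The image is infinite because \(G_e\cap Z_v=1\), since \(\chi\) is injective on \(Z_v\), but this is not needed.)

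The main obstacle is the step \(e\cdot z=e\): it rests on condition \ref{item:block-local-G} and on \(G_e\) being non-trivial, and both must be checked carefully from the intermediate-tree axioms rather than from \(T_{\mathrm{ref}}\).
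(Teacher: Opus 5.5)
Your proof is correct and follows the paper's argument. For (i), cocompactness of the \(G\)-action gives a first return \(t^dg^{-1}\in M_\sigma\). For (ii), the identity \(G_{e\cdot z}=G_e^z=G_e\) together with condition (vi) forces \(e\cdot z=e\), after which virtual cyclicity is immediate. The only difference is how you get \(G_e\neq 1\): you deduce it from \(M_e\) being virtually \(\Z^2\) with \(M_e/G_e\cong\Z\), whereas the paper uses a preimage edge of \(T_{\mathrm{ref}}\) whose non-trivial cyclic stabiliser lies in \(G_e\). Both work, and your remark about \(s_e\) commuting with \(z\) is not needed.
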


\begin{proof}
Since \(G\lhd G_\phi\), the element \(t\) induces an action on the finite graph \(T/G\). Hence, for every vertex or edge \(\sigma\),
there are \(d>0\) and \(g\in G\) such that
\[
 \sigma\cdot t^d=\sigma\cdot g.
\]
Thus \(s=t^dg^{-1}\in M_\sigma\) has non-zero height. The height
map restricts to a split exact sequence
\[
 1\longrightarrow G_\sigma\longrightarrow M_\sigma
 \xrightarrow{\ \chi\ }\chi(M_\sigma)\longrightarrow1,
\]
where \(\chi(M_\sigma)\) is a non-zero subgroup of \(\Z\). Choosing an element of least positive height gives the required suspension.

Now let \(v\) be finite-outer, let \(e\) be incident at \(v\), and
let \(z\in Z_v\). Since \(z\) centralises \(G_v\),
\[
      G_{e\cdot z}=G_e^z=G_e.
\]
Moreover, \(G_e\neq1\): an edge of \(T_{\mathrm{ref}}\) mapping to \(e\) has a non-trivial cyclic \(G\)-stabiliser contained in
\(G_e\). If \(e\cdot z\neq e\), this equality therefore contradicts condition~\ref{item:block-local-G} for the intermediate
tree \(T\). Hence \(e\cdot z=e\), so \(Z_v\leq M_e\). Finally, \(M_e\) is virtually \(\Z^2\), and
quotienting it by the infinite cyclic subgroup \(Z_v\) gives a
virtually cyclic group.
\end{proof}

A \emph{full fold} at an elementary vertex \(a\) identifies two distinct incident edges pointwise, together with all their
\(G_\phi\)-translates. Equivalently, it is the elementary equivariant fold of \cite{BestvinaFeighn}. Its quotient is again a tree. 

\begin{definition}[Admissible fold]\label{def:admissible-fold}
Let \((q,T)\) be an intermediate tree, let \(a\) be an elementary vertex of \(T\), and let \(e_i=[a,v_i]\), \(i=1,2\), be distinct
incident edges whose opposite endpoints are finite-outer. Their full
fold is \emph{admissible} if
\[
Z_{v_1}\cap Z_{v_2}\neq1.
\]
\end{definition}

By Lemma~\ref{lem:automatic-stabilisers}\textup{(ii)}, both centres
lie in \(M_a\). 

We note that an admissible fold map is injective on elementary vertices and maps every edge linearly onto an edge.

The following is the induction step for the entire construction.

\begin{lemma}[One admissible fold]\label{lem:finite-outer-fold}
Let \((q,T)\) be an intermediate tree, let \(a\) be elementary, and
let \(e_i=[a,v_i]\), \(i=1,2\), be an admissible pair. Let
\[
 p:T\longrightarrow T'
\]
be their full equivariant fold. Then \((p\circ q,T')\) is again an intermediate tree.
\end{lemma}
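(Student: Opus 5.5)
The plan is to identify the single new kind of vertex that the fold produces and check conditions (iii)--(vi) of Theorem~\ref{thm:block-reduction} for it. Every other vertex and edge of \(T'\) has a preimage in \(T\) with the same stabiliser. A full fold of \(e_1,e_2\) at \(a\) sends \(v_1,v_2\) to one vertex \(v\) and \(e_1,e_2\) to one edge \(e=[a,v]\). It also identifies the whole \(G_\phi\)-orbits involved, so if \(v_2=v_1\cdot g\) the two vertices already lie in one orbit. By the standard description of stabilisers after an equivariant fold \cite{BestvinaFeighn}, we get
\[
M_e=\langle M_{e_1},M_{e_2}\rangle\le M_a,\qquad M_v=\langle M_{v_1},M_{v_2}\rangle\ \text{or}\ \langle M_{v_1},g\rangle,
\]
and \(M_v\) splits as an amalgam over \(M_{e_1}\) and \(M_{e_2}\) (respectively an HNN extension) whose Bass--Serre tree is the local picture \(v_1 - a - v_2\).

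Several conditions are then formal. Since \(a\) is not identified with anything, the fold is injective on elementary vertices, so the bipartition survives. The fold only identifies edges whose non-elementary endpoints are finite-outer, so the star of every pseudo-Anosov vertex is untouched, which gives (v). Cocompactness survives because the quotient graph only shrinks. Both \(M_{e_i}\) have finite index in the virtually-\(\Z^2\) group \(M_a\), hence so does \(M_e\); this gives (iii). Part (i) of Lemma~\ref{lem:automatic-stabilisers} gives the suspension structure of \(M_v\).

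The real content is showing that \(v\) is finite-outer, i.e.\ (iv)(c). Pick \(1\neq z\in Z_{v_1}\cap Z_{v_2}\), which exists by admissibility. Then \(z\) centralises \(M_{v_1}\) and \(M_{v_2}\). In the HNN case I would first replace \(z\) by a power commuting with \(g\). Here I would use that \(g\) conjugates \(Z_{v_1}\) to \(Z_{v_2}\), and both are infinite cyclic subgroups of \(M_a\) with injective height, so \(g\) acts on \(\langle z\rangle\) preserving height. So some \(z'=z^k\) of non-zero height is central in \(M_v\). Writing \(z'=t^ng\), conjugation by \(z'\) restricted to \(G_v\) is \(\phi^n\) up to an inner automorphism and is trivial, so the monodromy of \(M_v\) has finite order in \(\Out(G_v)\).

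Hyperbolicity of \(G_v\) is what I expect to be the \textbf{main obstacle}. The plan is to pass to \(\ol M_v=M_v/\langle z'\rangle\), which splits as a graph of groups with two vertex groups \(M_{v_i}/\langle z'\rangle\). These are virtually \(M_{v_i}/Z_{v_i}\), hence hyperbolic. The edge groups are the images of \(M_{e_i}\), which are virtually cyclic by Lemma~\ref{lem:automatic-stabilisers}(ii). The aim is to show these edge groups are almost malnormal on at least one side and then apply the Bestvina--Feighn/Kharlampovich--Myasnikov combination theorem. The malnormality should come from condition (vi) in \(T\) together with \(\langle z'\rangle\) being exactly the kernel of the action on the star of \(a\). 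Then \(G_v\), which embeds with finite index in \(\ol M_v\) since \(\chi\) is injective on \(\langle z'\rangle\), is hyperbolic. It is non-elementary because it contains \(G_{v_1}\).

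Finally, condition (vi) at \(v\) is where I would argue by lifting. Edges at \(v\) are images of edges at translates of \(v_1\) or \(v_2\) inside the local tree of \(M_v\). Suppose \(1\neq h\in G\) fixes two distinct edges at \(v\). Edges of \(T\) with the same image at \(v\) differ by elements of \(M_v\). So \(h\) would either fix two distinct edges at some \(v_i\) in \(T\), contradicting (vi) for \(T\), or fix a path \(v_1\cdot m - a\cdot m - v_2\cdot m'\) in \(T\). In the latter case \(h\in G_{e_1}^m\cap G_{e'}\) for some edge \(e'\ne e_1^m\) at \(v_1\cdot m\), again contradicting (vi) unless the two edges coincide after folding. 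The second possibility should be ruled out using the torsion-free cyclicity of \(G_a\) and of the edge \(G\)-stabilisers. The HNN case needs separate care in both this step and the hyperbolicity step.
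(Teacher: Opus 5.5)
Your handling of the formal conditions and of the central element \(z\) is essentially the paper's. The verification of condition (vi) at the new vertex \(v\), however, has a genuine gap. Your case split assumes that an element \(1\neq h\in G\) fixing two edges \(e_1'\neq e_2'\) at \(v\) also fixes lifts of them in \(T\). This fails in general, because \(G_e=M_e\cap G\) can be larger than the \(G\)-stabiliser of every preimage of \(e\). For instance, suppose \(G_{e_1}\) has index \(k>1\) in \(G_a=\langle c\rangle\) and \(v_1\) is finite-outer. Then \(Z_{v_1}\leq M_a\) centralises \(G_{e_1}\), hence centralises \(c\), so the fold of \(e_1\) with \(e_1\cdot c\) is admissible. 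After that fold, \(c\) fixes the folded edge but none of its preimages, since all of them have \(G\)-stabiliser \(G_{e_1}\). There are further problems. Edges with the same image need not differ by an element of \(M_v\): take \(e_1,e_2\) in distinct orbits. Your second case ends with two edges meeting at the elementary vertex \(a\cdot m\), where (vi) gives no contradiction. And the residual case is only asserted to be ruled out. The missing idea is the injectivity on elementary vertices that you noted at the start. The elementary endpoints of \(e_1',e_2'\) are distinct, so they have distinct unique preimages \(\tilde a_1,\tilde a_2\) in \(T\). By equivariance \(h\) fixes both, hence fixes the geodesic \([\tilde a_1,\tilde a_2]\). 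By bipartiteness this geodesic has an interior non-elementary vertex at which \(h\) fixes two distinct edges, contradicting (vi) for \(T\). No separate same-orbit case is needed. The paper runs this argument after first passing to a power \(h^N\) that fixes every preimage of both edges.

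For hyperbolicity of \(G_v\) the paper takes a shorter route. It shows \(G_e\) is infinite cyclic and the new vertex lies in the minimal \(G\)-subtree of \(T'\). Then \(G_v\) is a vertex group of a cyclic splitting of the hyperbolic group \(G\), so it is quasiconvex by Bowditch's vertex-group theorem. Your combination argument in \(M_v/\langle z\rangle\) is intrinsic to \(M_v\) and can be completed, but the set-up needs correcting. The group \(M_v\) acts on the union of the preimages of the edges in \(e\cdot M_v\) with quotient \(v_1 - a - v_2\). This gives \(M_v=M_{v_1}*_{M_{e_1}}M_e*_{M_{e_2}}M_{v_2}\), with middle vertex group \(M_e=M_a\cap M_v\). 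In the same-orbit case it is \(M_{v_1}*_{M_{e_1}}M_e\) with \(M_e=\langle M_{e_1},g\rangle\), not an HNN extension. Almost malnormality of the image of \(M_{e_i}\) in \(M_{v_i}/\langle z\rangle\) cannot come from a kernel on the star of \(a\): the kernel of \(M_a\) on that finite star has finite index in \(M_a\). It comes instead from the height trick of Lemma~\ref{lem:four-acyl}. By (vi), an element of \(M_{v_i}\) fixing two distinct edges at \(v_i\) has a power in \(Z_{v_i}\), and \(\langle z\rangle\) has finite index in \(Z_{v_i}\). Two minor points. First, no power of \(z\) is needed: \(z\) and \(z^g\) both lie in \(Z_{v_2}\) and have equal height, so \(z^g=z\). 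Second, finite-outerness should be read off from \(z=\tau^mb^{-1}\), with \(\tau\in M_v\) of least positive height and \(b\in G_v\), so that \(\theta^m=\ad_b\). Writing \(z=t^ng\) only identifies conjugation by \(z\) with \(\phi^n\) up to conjugation by an element of \(G\), not of \(G_v\).
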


\begin{proof}
Let \(e\) be the folded edge, let \(w\) be its non-elementary
endpoint, and put
\[
W=M_w,\qquad H=W\cap G.
\]
The standard stabiliser formulae for a full equivariant fold are as
follows. If \(e_1,e_2\) lie in distinct \(G_\phi\)-orbits, then
\[
 M_e=\langle M_{e_1},M_{e_2}\rangle,\qquad
 W=\langle M_{v_1},M_{v_2}\rangle.
\]
If \(e_2=e_1\cdot g\) for some \(g\in M_a\), then
\[
 M_e=\langle M_{e_1},g\rangle,\qquad
 W=\langle M_{v_1},g\rangle.
\]
Since \((q,T)\) is intermediate, \(M_{e_1}\) has finite index in
\(M_a\), while
\[
M_{e_1}\leq M_e\leq M_a.
\]
Thus \(M_e\) also has finite index in \(M_a\). All other edge and
elementary vertex stabilisers are unchanged.

Choose
\[
 1\neq z\in Z_{v_1}\cap Z_{v_2}.
\]
If the two edges lie in distinct orbits, the displayed formula for \(W\) immediately gives \(z\in Z(W)\). In the same-orbit case,
\(v_2=v_1\cdot g\) and \(Z_{v_2}=Z_{v_1}^{g}\). Both \(z\) and \(z^g\) belong to \(Z_{v_2}\), and
\(\chi(z^g)=\chi(z)\). Since \(\chi\) is injective on the infinite
cyclic group \(Z_{v_2}\), we have \(z^g=z\). The second displayed formula for \(W\) again gives
\[
 z\in Z(W),\qquad \chi(z)\neq0.
\]

The quotient tree \(T'\) is a cocompact \(G\)-tree. Its \(G\)-edge stabilisers are cyclic: this is unchanged away from the
folded orbit, while \(G_e=M_e\cap G\) contains \(G_{e_1}\) and is therefore infinite. It is virtually abelian and contained in the
torsion-free hyperbolic group \(G\), hence is infinite cyclic. Let \(T'_{\min}\) be the minimal \(G\)-subtree. The group \(H\)
contains \(G_{v_1}\), and hence is non-elementary. If \(w\notin T'_{\min}\), then \(H\) fixes both \(w\) and its projection
to \(T'_{\min}\), and therefore fixes an edge, contradicting the cyclicity of the edge stabilisers. Thus \(w\in T'_{\min}\).
Bowditch's vertex-group theorem \cite[Proposition~1.2]{Bowditch} now shows that \(H=\Stab_G(w)\) is
quasiconvex in \(G\), and hence hyperbolic.

Choose \(\tau\in W\) whose height generates \(\chi(W)=d\Z\), with
\(d>0\). Then \(W=H\rtimes_\theta\langle\tau\rangle\), where \(\theta\) is induced by \(\tau\). Since \(z\) has non-zero height,
we may write
\[
z=\tau^m b^{-1}
\]
for some \(m>0\) and \(b\in H\). Centrality of \(z\) says precisely that \(\theta^m=\ad_b\). Thus \([\theta]\) has finite order in
\(\Out(H)\), so \(w\) is a finite-outer vertex.

We next verify condition~\ref{item:block-local-G}. Away from the \(G_\phi\)-orbit of \(w\), the fold is injective on the
stars of non-elementary vertices, so condition~\ref{item:block-local-G} is inherited from \(T\). It
remains only to verify it at \(w\).

Write
\(p:T\to T'\) for the fold. If an edge \(e\) maps to \(e'\), then \(\Stab(e)\leq\Stab(e')\); both groups are virtually \(\Z^2\), so
this inclusion has finite index. Since the actions are cocompact, these indices are uniformly bounded over the finitely many edge
orbits. Hence, if \(h\) fixes \(e'\), some positive power \(h^N\), with \(N\) independent of the chosen preimage, fixes every edge in
\(p^{-1}(e')\).

Suppose now that \(1\neq h\in H\) fixes two distinct edges \(e'_1,e'_2\) at \(w\). Choose a common positive power \(h^N\)
which fixes every preimage of both edges. Let \(a_i\) be the elementary endpoint of \(e'_i\), and let \(\widetilde a_i\) be its
preimage in \(T\); a full fold is injective on elementary vertices, so \(\widetilde a_1\neq\widetilde a_2\). The element \(h^N\) fixes
the path between these vertices. That path contains an old non-elementary vertex at which \(h^N\) fixes two distinct incident
edges, contradicting condition~\ref{item:block-local-G} for \(T\).
Since \(G_\phi\) is torsion-free, \(h^N\neq1\), and the contradiction proves the claim.

All folds occur at elementary vertices and involve only finite-outer opposite endpoints. Consequently the elementary/non-elementary
bipartition, cocompactness, and the complete
stars of the pseudo-Anosov vertices are preserved. The new vertex is finite-outer and satisfies condition~\ref{item:block-local-G} by
the preceding argument. Every other vertex and edge retains the corresponding property from \(T\). The map
\(p\circ q\) is a surjective equivariant simplicial map, a finite composite of full folds, and maps every edge of
\(T_{\mathrm{ref}}\) linearly onto an edge. Thus
\((p\circ q,T')\) satisfies Definition~\ref{def:intermediate-tree}.
\end{proof}

Thus \(T_{\mathrm{ref}}\) has all the properties asserted in
Theorem~\ref{thm:block-reduction} except four-acylindricity. 
By Lemma~\ref{lem:finite-outer-fold} admissible folds may be iterated while preserving the defining properties of intermediate trees. Minimality and four-acylindricity will be dealt with after termination.

\begin{remark}\label{rem:twist-directions}
The admissibility criterion has a direct monodromy interpretation. For example, if
\[
 G=A*_C B,\qquad
 \phi|_A=\mathrm{id},\qquad
 \phi|_B=\ad_c\quad(c\in C),
\]
then the two local central directions in the suspended edge group \(\langle c,t\rangle\cong\Z^2\) are \(t\) and \(c^{-1}t\). Their
difference is the Dehn-twist parameter. The fold is admissible exactly when this twist disappears, so that the two locally inner
monodromies patch to an inner monodromy on the amalgam.
\end{remark}

\subsection{The terminal block tree}

%
%

\begin{proposition}[The terminal block tree]\label{prop:block-tree}
	Starting with \((\mathrm{id},T_{\mathrm{ref}})\), repeatedly perform an admissible full fold whenever one is available. This process terminates after finitely many steps. If
	\[
	\widehat\beta:T_{\mathrm{ref}}\longrightarrow T_{\mathrm{ter}}
	\]
	is the composite of a terminal sequence, then:
\begin{enumerate}[label=\textup{(\alph*)}]
	\item \((\widehat\beta,T_{\mathrm{ter}})\) is an intermediate tree;
	
	\item the minimal \(G\)-invariant subtree \(T_{\mathrm{blk}}\) of
	\(T_{\mathrm{ter}}\) is \(G_\phi\)-invariant, both \(G\) and
	\(G_\phi\) act minimally and cocompactly on it, and passing from
	\(T_{\mathrm{ter}}\) to \(T_{\mathrm{blk}}\) removes precisely the
	valence-one elementary vertices;
	
	\item if \(r:T_{\mathrm{ter}}\to T_{\mathrm{blk}}\) denotes this
	collapse, then
	\[
	\beta=r\circ\widehat\beta:T_{\mathrm{ref}}\longrightarrow
	T_{\mathrm{blk}}
	\]
	is a finite composite of full equivariant folds followed by a
	collapse, and every edge of \(T_{\mathrm{ref}}\) maps either linearly
	onto an edge or to a vertex;
	
	\item conclusions \textup{(iii)--(vi)} of
	Theorem~\ref{thm:block-reduction} hold for \(T_{\mathrm{blk}}\);
	
	\item \label{part:centres} if distinct finite-outer vertices \(v_1,v_2\) are adjacent to
	the same elementary vertex, then
	\[
	Z_{v_1}\cap Z_{v_2}=1.
	\]
\end{enumerate}
\end{proposition}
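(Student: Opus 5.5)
Termination comes first, and I would get it from a complexity that strictly drops under each admissible fold: the number of non-elementary vertex orbits in \(T/G_\phi\). This orbit count is finite, since \(T/G\) is finite and hence \(T/G_\phi\) is finite. In the distinct-orbit case of Lemma~\ref{lem:finite-outer-fold}, the orbits of \(v_1\) and \(v_2\) are merged, so the count drops by one. The same-orbit case \(e_2=e_1\cdot g\) with \(g\in M_a\) needs more care, because the vertex orbit count is unchanged. Here I would count edge orbits in \(T/G_\phi\), which drop by one in the distinct-orbit case and stay fixed in the same-orbit case.

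For the same-orbit case I would use a secondary measure: the index \([M_a:M_e]\), summed over edge orbits. It strictly decreases, since \(M_e=\langle M_{e_1},g\rangle\) properly contains \(M_{e_1}\) (otherwise \(g\) would fix \(e_1\) and \(e_1=e_2\)). It is finite by conclusion (iii) for intermediate trees, and folds never increase these indices elsewhere. The lexicographic pair (edge orbits, total index) is therefore a well-founded decreasing complexity, and the process terminates. Part (a) is then Lemma~\ref{lem:finite-outer-fold} iterated, starting from the observation already made that \((\mathrm{id},T_{\mathrm{ref}})\) is intermediate. Part (e) is exactly the statement that no admissible pair remains. The one subtlety is distinct vertices in the same orbit: these are covered because Definition~\ref{def:admissible-fold} allows \(e_2\in e_1\cdot M_a\). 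Since \(v_1\neq v_2\), the edges are distinct and the fold is available.

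For (b), normality of \(G\) in \(G_\phi\) implies that \(t\) maps the minimal \(G\)-subtree to a minimal \(G\)-subtree, which by uniqueness is itself. So \(T_{\mathrm{blk}}\) is \(G_\phi\)-invariant. \(G\)-minimality gives \(G_\phi\)-minimality, and cocompactness passes to an invariant subtree. To show that only valence-one elementary vertices are removed, I would argue as follows.

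\begin{itemize}
\item A non-elementary vertex \(w\) lies in \(T_{\mathrm{min}}\), by the argument in Lemma~\ref{lem:finite-outer-fold}: \(G_w\) is non-elementary (it contains a non-elementary stabiliser from \(T_{\mathrm{ref}}\), which folding only enlarges), while \(G\)-edge stabilisers are cyclic.
\item Every edge therefore has a non-elementary endpoint in the minimal tree.
\item An elementary vertex with two incident edges lies on a segment between non-elementary vertices, hence in \(T_{\mathrm{min}}\).
\end{itemize}

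Removing valence-one leaves does not change any remaining stabiliser, so (c) and (d) follow: (iii)–(vi) are local statements about surviving vertices and edges, and conditions at elementary vertices are unaffected because leaf edges are simply deleted.

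The main obstacle I expect is the termination count in the same-orbit case. The stars at \(a\) may be infinite, so I must make sure the index measure is genuinely finite and non-increasing for the other edge orbits. I would try first to show that each fold changes stabilisers only along the single folded edge orbit, as the stabiliser formulae in Lemma~\ref{lem:finite-outer-fold} indicate. If that turns out to be delicate, the fallback is to bound the total number of folds by the finitely many \(G\)-edge orbits, using \(M_a\)-cosets of finite index.
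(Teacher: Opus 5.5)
Most of your proposal follows the paper. You iterate Lemma~\ref{lem:finite-outer-fold} for (a). For (b) you use normality of $G$ and uniqueness of the minimal subtree, and cyclicity of $G$-edge stabilisers to keep every non-elementary vertex. For (e) you use terminality.

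The genuine gap is in (d). You claim that (iii)--(vi) are local statements about surviving vertices and edges, but conclusion (v) of Theorem~\ref{thm:block-reduction} is not. It asserts that the \emph{complete} star of each pseudo-Anosov vertex, with all its stabilisers, is the one it had in $T_{\mathrm{ref}}$. Suppose the collapse $r$ deleted a valence-one elementary vertex $a$ adjacent to a pseudo-Anosov vertex $p$. Then the star of $p$ would lose the edge $[a,p]$, and (v) would fail for $T_{\mathrm{blk}}$ even though it holds for $T_{\mathrm{ter}}$. So you must show that no deleted leaf is adjacent to a pseudo-Anosov vertex. The paper argues as follows:
\begin{enumerate}[(1)]
\item The edge $[a,p]$ is never involved in an admissible fold, since both folded edges have finite-outer endpoints. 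Hence it is never identified with another edge.
\item By $G$-minimality of $T_{\mathrm{ref}}$, the vertex $a$ has a second incident edge there.
\item The image of that edge in $T_{\mathrm{ter}}$ is still incident at the image of $a$, because folds are injective on elementary vertices, and it is distinct from $[a,p]$.
\end{enumerate}
Thus $a$ has valence at least two in $T_{\mathrm{ter}}$ and survives in $T_{\mathrm{blk}}$.

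There are also some smaller points.
\begin{enumerate}[(1)]
\item For the word \emph{precisely} in (b), you also need the converse: every valence-one elementary vertex is removed. This holds because a minimal tree has no leaves. For such a leaf $a$ with edge $e$ one has $M_a=M_e$, so $r$ really is a collapse.
\item Your lexicographic termination measure works, but its second coordinate alone suffices. It is exactly the paper's $\kappa(T)=\sum_{[a]}\operatorname{val}_T(a)$: the edges at $a$ fall into $M_a$-orbits of size $[M_a:M_e]$, one for each $G_\phi$-orbit of edges at $[a]$. A full fold lowers this sum in both the same-orbit and distinct-orbit cases.
\item Your preliminary claim that the number of non-elementary vertex orbits drops in the distinct-orbit case is false when $v_1,v_2$ share a $G_\phi$-orbit but $e_1,e_2$ do not. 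You do not end up using it.
\item Elementary vertices have finite valence by (iii), so your worry about infinite stars at $a$ is moot.
\end{enumerate}
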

\begin{proof}
Every elementary vertex of an intermediate tree has finite valence:
there are finitely many incident edge orbits, and each edge stabiliser
has finite index in the elementary vertex stabiliser. There are also
only finitely many elementary vertex orbits. Thus
\[
 \kappa(T)=
 \sum_{[a]\in V_{\mathrm{elem}}(T)/G_\phi}
 \operatorname{val}_T(a)
\]
is finite. A full fold strictly decreases \(\kappa(T)\), so the
process terminates. The initial verification above supplies the
initial intermediate tree, and
Lemma~\ref{lem:finite-outer-fold} preserves this property at every
step. Hence \((\widehat\beta,T_{\mathrm{ter}})\) is intermediate, proving (a).

Let \(T_{\mathrm{blk}}\) be the minimal \(G\)-subtree of \(T_{\mathrm{ter}}\). Since \(G\) is normal in \(G_\phi\), uniqueness
of the minimal subtree makes \(T_{\mathrm{blk}}\) \(G_\phi\)-invariant. It is therefore minimal for both actions, and
cocompactness is inherited from \(T_{\mathrm{ter}}\).

Every non-elementary vertex \(v\) of \(T_{\mathrm{ter}}\) lies in
\(T_{\mathrm{blk}}\). Indeed, otherwise \(G_v\) would fix both \(v\)
and its projection to \(T_{\mathrm{blk}}\), and hence would fix an
edge, contrary to the cyclicity of the \(G\)-edge stabilisers. Thus only elementary vertices are removed on passing to the minimal
subtree. If an elementary vertex has at least two incident edges, their non-elementary endpoints lie in \(T_{\mathrm{blk}}\), so the
elementary vertex lies on the geodesic between them and also belongs to \(T_{\mathrm{blk}}\). Conversely, a minimal tree has no
valence-one vertices. Consequently the removed vertices are precisely the valence-one elementary vertices. For each such vertex \(a\), with
incident edge \(e\), we have \(M_a=M_e\), so this is a collapse of a redundant edge. This proves (b). 

No removed elementary vertex is adjacent to a pseudo-Anosov vertex. Indeed, the incident pseudo-Anosov edge is never folded. If its elementary endpoint has another incident edge in \(T_{\mathrm{ref}}\), then at least one such edge survives and remains distinct from the pseudo-Anosov edge. Otherwise that endpoint has valence one in \(T_{\mathrm{ref}}\), contrary to minimality.

Since \(T_{\mathrm{blk}}\) is a subtree of \(T_{\mathrm{ter}}\), no new vertices or edges are created, and the stabilisers of all retained vertices and edges are unchanged. Hence the bipartition, all vertex and edge
types, the complete stars of pseudo-Anosov vertices, and
condition~\ref{item:block-local-G} are inherited by
\(T_{\mathrm{blk}}\), proving (d). Let
\(r:T_{\mathrm{ter}}\to T_{\mathrm{blk}}\) be the equivariant
collapse and put \(\beta=r\circ\widehat\beta\). This proves \textup{(c)}.

Finally, if the displayed intersection were non-trivial, the two
incident edges already formed an admissible pair in
\(T_{\mathrm{ter}}\), contrary to terminality, proving (e).
\end{proof}

\begin{definition}
	\label{def:Zv generator}
	For each finite-outer vertex \(v\) of \(T_{\mathrm{blk}}\), let
	\(z_v\) be the generator of \(Z_v\) satisfying \(\chi(z_v)>0\).
	This choice is \(G_\phi\)-equivariant, since \(\chi\) is invariant
	under conjugation.
\end{definition}

\begin{remark}[Canonicity of the block tree]\label{rem:block-canonical}
	
Repeated applications of Lemma~\ref{lem:finite-outer-fold} serve to make the action `more' acylindrical, after which we pass to the minimal subtree to obtain the final block tree. This suggests that the terminal quotient is independent of the order of folds. However, we neither need nor claim that canonicity here.

We do prove that \( T_{\mathrm{blk}}\) is actually invariant under any automorphism of \( G_{\phi}\), Proposition~\ref{prop:invariant block tree}. 
\end{remark}

\subsection{Acylindricity}

We make explicit the peripheral fact used at pseudo-Anosov vertices.

\begin{lemma}[Pseudo-Anosov peripherals]\label{lem:pA-peripherals}
Let \(V\) be a pseudo-Anosov vertex stabiliser and let \(E\) stabilise
an incident edge. Put \(C=E\cap G\). Then
\[
        E=N_V(C).
\]
Moreover, stabilisers of distinct edges incident at \(V\) have
trivial intersection.
\end{lemma}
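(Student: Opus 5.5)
We prove both assertions inside the pseudo-Anosov mapping-torus group \(V=\pi_1(S)\rtimes_\psi\langle s\rangle\). By Theorem~\ref{thm:block-reduction}(v) the star of a pseudo-Anosov vertex is unchanged from \(T_{\mathrm{ref}}\), so by Theorem~\ref{thm:pA-refinement}(iii) the edge stabiliser \(E\) is the suspension of the full stabiliser of a boundary-component orbit. Write \(V\cap G=\pi_1(S)=:K\) and \(C=E\cap G=E\cap K\), a maximal cyclic boundary subgroup \(\langle c\rangle\) of \(K\). By Lemma~\ref{lem:automatic-stabilisers}(i), \(E=C\rtimes\langle s_E\rangle\), where \(s_E\) has the least positive height in \(E\), equal to the first-return time \(d_E\) of the boundary component under \(\psi\).

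For \(E=N_V(C)\), the inclusion \(E\leq N_V(C)\) holds because \(C=E\cap G\) is normal in \(E\), as \(G\lhd G_\phi\). For the converse, let \(n\in N_V(C)\) and write \(n=s^kg\) with \(g\in K\). Conjugation by \(n\) induces on \(K\) the automorphism \(\psi^k\) composed with an inner automorphism, and it carries \(\langle c\rangle\) to itself. Hence the mapping class \(\psi^k\) sends the boundary component \(\partial_c\) to itself, up to free homotopy. Since boundary components of a compact hyperbolic surface are pairwise non-freely-homotopic, this forces \(d_E\mid k\). Multiplying \(n\) by a suitable power of \(s_E\), we may assume \(k=0\), so \(n\in K\) normalises \(\langle c\rangle\). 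In a torsion-free hyperbolic group, the normaliser of a maximal cyclic subgroup is that subgroup itself: it is elementary, hence cyclic, and contains \(\langle c\rangle\). Thus \(n\in C\leq E\).

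For the intersection statement, let \(e_1\neq e_2\) be edges at the vertex, with stabilisers \(E_1,E_2\), and take \(x\in E_1\cap E_2\). If \(x\neq 1\), some power \(x^N\neq 1\) has height \(0\) modulo both \(d_{E_i}\), and after adjusting we can reduce to a suitable power fixing both edges. If \(\chi(x)=0\), then \(x\in G_{e_1}\cap G_{e_2}=1\) by Theorem~\ref{thm:block-reduction}\ref{item:block-local-G}. If \(\chi(x)\neq0\), write the action of \(x\) on \(K\) as \(\psi^k\) composed with an inner automorphism, \(k\neq 0\). This automorphism preserves two distinct peripheral subgroups \(C_1=E_1\cap K\) and \(C_2=E_2\cap K\), that is, two distinct peripheral conjugates, which either are distinct boundary components or are distinct lifts of the same one.

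Ruling out this last case is the main obstacle. I would use the standard fact that for pseudo-Anosov \(\psi\), the mapping torus \(V\) is hyperbolic relative to the suspended boundary subgroups, i.e. the cusp groups (Thurston; compare Dahmani--Krishna). In a relatively hyperbolic group, distinct conjugates of peripheral subgroups intersect in finite groups, hence trivially here. Since \(E_1\) and \(E_2\) are full cusp subgroups by Theorem~\ref{thm:pA-refinement}(iii) and are distinct (their \(G\)-parts \(C_1\neq C_2\)), \(E_1\cap E_2=1\).

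Alternatively, and more directly, an element of nonzero height fixing two distinct peripheral lines in \(\widetilde S\) would give a periodic configuration, contradicting the absence of invariant essential arcs for a pseudo-Anosov map. I would cite the relative hyperbolicity, which also handles the non-orientable case via the orientation double cover.
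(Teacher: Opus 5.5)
Your proof is correct, but it reaches the normaliser identity by a different route from the paper; for the intersection statement it is the paper's argument.

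\textbf{Intersection statement.} You use the same ingredients as the paper: \(V\) is hyperbolic relative to its full cusp groups, and \(E_1\neq E_2\) because \(G_{e_1}\cap G_{e_2}=1\) by condition~\ref{item:block-local-G} while both are non-trivial. Almost malnormality and torsion-freeness then give \(E_1\cap E_2=1\). Your sentence about powers \(x^N\) and the split according to \(\chi(x)\) are unnecessary. The element \(x\) already fixes both edges, and almost malnormality handles every \(x\) at once.

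\textbf{Normaliser statement.} The paper again uses almost malnormality. If \(g\in N_V(C)\), then \(C\le E\cap E^g\), which forces \(g\in E\). You instead work inside the surface group. A normalising element \(s^kg\) makes \(\psi^k\) fix the boundary component, so \(d_E\mid k\). Correcting by a power of \(s_E\) puts you in \(\pi_1(S)\), where a maximal cyclic subgroup is its own normaliser.

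What each approach buys:
\begin{itemize}
\item Your route avoids relative hyperbolicity and never uses that \(\psi\) is pseudo-Anosov. It does depend on the bookkeeping fact that \(E\) contains an element of exponent exactly \(d_E\), which is the ``full stabiliser'' description.
\item The paper's route gets both assertions from a single citation, and it reuses that peripheral structure in Proposition~\ref{prop:effective-preaux}.
\end{itemize}

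A shorter argument for the normaliser is also available. If \(n\in N_V(C)\), then \(e\cdot n\) is an edge at the same vertex with \(G\)-stabiliser \(C^n=C\neq1\). Condition~\ref{item:block-local-G} then forces \(e\cdot n=e\), exactly as in Lemma~\ref{lem:automatic-stabilisers}\textup{(ii)}. The same observation also proves the full-stabiliser fact that your argument relies on.
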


\begin{proof}

By Theorem~\ref{thm:pA-refinement}\textup{(iii)} and
Proposition~\ref{prop:block-tree}\textup{(d)}, the incident edge
groups are the full cusp groups of the corresponding finite-volume
hyperbolic mapping-torus group.

Such a group is strongly relatively hyperbolic relative to its cusp groups by
\cite[Theorem~5.1]{Farb}, and these peripherals are almost malnormal by \cite[Theorem~1.4]{Osin}; this is also the peripheral structure used
below for the local algorithms. The ambient group is torsion-free, so distinct peripheral conjugates have trivial intersection.

Certainly \(E\leq N_V(C)\). Conversely, if \(g\in N_V(C)\), then \(C\leq E\cap E^g\). Almost malnormality gives \(g\in E\), proving
the normaliser assertion. Distinct incident edges determine distinct peripheral conjugates: otherwise a non-trivial element of their edge
stabiliser would fix both edges, contrary to condition~\ref{item:block-local-G}. The final assertion now follows
from almost malnormality.
\end{proof}

\begin{lemma}[Four-acylindricity]\label{lem:four-acyl}
The pointwise stabiliser in \(G_\phi\) of a segment of
\(T_{\mathrm{blk}}\) containing more than four edges is trivial.
\end{lemma}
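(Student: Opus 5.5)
We argue by contradiction. Suppose \(1\neq h\in G_\phi\) fixes pointwise a segment \(\gamma\) with at least five edges. Since \(T_{\mathrm{blk}}\) is bipartite, \(\gamma\) contains at least two interior non-elementary vertices. More precisely, it contains a subsegment of the form \(v_1-a-v_2\), where \(a\) is elementary and \(v_1\neq v_2\) are non-elementary. Each \(v_i\) is an interior vertex of \(\gamma\), so \(h\) fixes two distinct edges at \(v_i\).

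\textbf{Step 1: no pseudo-Anosov vertices.} If some \(v_i\) is pseudo-Anosov, then Lemma~\ref{lem:pA-peripherals} says that the stabilisers of distinct incident edges meet trivially, so \(h=1\). Hence every interior non-elementary vertex of \(\gamma\) is finite-outer.

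\textbf{Step 2: \(h\) is central at each interior vertex.} Let \(v\) be an interior finite-outer vertex of \(\gamma\), and let \(e,e'\) be the edges of \(\gamma\) at \(v\). Work in \(\ol M_v=M_v/Z_v\), which is torsion-free by Proposition~\ref{prop:finite-outer}. By Lemma~\ref{lem:automatic-stabilisers}(ii) the images of \(M_e\) and \(M_{e'}\) are virtually cyclic, and we want \(\ol M_e\cap\ol M_{e'}\) to be finite. Their intersection in \(G\) is \(G_e\cap G_{e'}=1\) by condition~\ref{item:block-local-G}. The generators of \(G_e\) and \(G_{e'}\) are loxodromic in the hyperbolic quotient, with maximal elementary subgroups \(E,E'\). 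If \(\ol M_e\cap\ol M_{e'}\) were infinite, then \(E=E'\), and commensurability would give powers \(g^p\in G_e\) and \(g'^q\in G_{e'}\) with \(g^p z^r=g'^q z^s\) in \(M_v\). Applying \(\chi\) and using that \(\chi\) vanishes on \(G\) forces \(r=s\), hence \(g^p=g'^q\in G_e\cap G_{e'}=1\), a contradiction. So the intersection is finite, hence trivial by torsion-freeness of \(\ol M_v\), and therefore \(h\in Z_v\).

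The main obstacle is precisely this commensurability argument: the height homomorphism is what allows us to pull a relation in the hyperbolic quotient back to \(G\).

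\textbf{Step 3: contradiction via terminality.} Apply Step 2 to both \(v_1\) and \(v_2\). This gives \(1\neq h\in Z_{v_1}\cap Z_{v_2}\), where \(v_1,v_2\) are distinct finite-outer vertices adjacent to the common elementary vertex \(a\). This contradicts Proposition~\ref{prop:block-tree}\ref{part:centres}.

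The count of five edges is what guarantees that both \(v_1\) and \(v_2\) are interior, whatever type of vertex \(\gamma\) starts at. If \(\gamma\) starts at an elementary vertex, five edges place the non-elementary vertices at positions \(1\) and \(3\), both interior. If \(\gamma\) starts at a non-elementary vertex, they sit at positions \(2\) and \(4\), and these are interior once there are five edges. This is why the bound is four.
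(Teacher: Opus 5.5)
Your Step 1 and the edge count are fine. The commensurability argument in Step 2 is also correct: it shows that the images of \(M_e\) and \(M_{e'}\) in \(\ol M_v=M_v/Z_v\) have \emph{finite} intersection. The gap is the next sentence. Proposition~\ref{prop:finite-outer} does not say that \(M_v/Z_v\) is torsion-free, and in general it is not; Section~\ref{sec:local-verification} explicitly warns of this.

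For example, write \(M_v=H\rtimes_\theta\langle\tau\rangle\) and suppose \(\theta^m=\mathrm{id}\) in \(\Aut(H)\), where \(m>1\) is the order of \([\theta]\) in \(\Out(H)\). Then \(Z_v=\langle\tau^m\rangle\), so the image of \(\tau\) in \(\ol M_v\) has order \(m\). Now suppose also that \(\theta\) fixes generators of two incident edge groups \(G_e,G_{e'}\), and nothing in the construction rules this out. Then \(G_{e\cdot\tau}=G_e^\tau=G_e\neq1\), so condition~\ref{item:block-local-G} forces \(e\cdot\tau=e\), and likewise \(e'\cdot\tau=e'\). Thus \(h=\tau\) fixes two edges at \(v\) although \(h\notin Z_v\). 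The conclusion \(h\in Z_v\) of Step 2 is therefore false in general, not merely unproved, and Step 3 as written rests on it.

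The repair is one line. Finiteness of \(\ol M_e\cap\ol M_{e'}\) gives some \(n_i\geq1\) with \(h^{n_i}\in Z_{v_i}\), for \(i=1,2\). Then \(h^{n_1n_2}\) is non-trivial, because \(G_\phi\) is torsion-free, and it lies in \(Z_{v_1}\cap Z_{v_2}\). This contradicts Proposition~\ref{prop:block-tree}\ref{part:centres}, which is exactly the paper's endgame.

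The paper reaches \(h^{m_i}\in Z_{v_i}\) more directly, without passing to the hyperbolic quotient. Condition~\ref{item:block-local-G} forces \(\chi(h)\neq0\), so some \(h^{m_i}z_{v_i}^{-n_i}\) has height zero. This element fixes both edges, since \(Z_{v_i}\le M_e\) by Lemma~\ref{lem:automatic-stabilisers}(ii). It is therefore trivial by condition~\ref{item:block-local-G} again. Your quotient argument is the same height trick applied one step removed, and it is correct once you aim only for a power of \(h\).
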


\begin{proof}
Suppose that \(1\neq g\in G_\phi\) fixes a segment containing five edges. Since the tree is bipartite, the segment contains two
non-elementary internal vertices \(V_1,V_2\), separated by an elementary vertex \(A\). If either \(V_i\) is pseudo-Anosov,
then \(g\) lies in the stabilisers of two distinct edges incident at \(V_i\), contrary to Lemma~\ref{lem:pA-peripherals}. Hence
\(V_i=M_{v_i}\), where both \(v_i\) are finite-outer vertices.

For each \(i\), the element \(g\) fixes the two distinct edges of the segment incident at \(v_i\). Condition~\ref{item:block-local-G}
therefore gives \(g\notin G_{v_i}\), and hence
\(\chi(g)\neq0\). Choose integers \(m_i>0\) and \(n_i\) such that
\[
m_i\chi(g)=n_i\chi(z_{v_i}).
\]
Then
\[
h_i=g^{m_i}z_{v_i}^{-n_i}
\]
has height zero. Both \(g\) and \(z_{v_i}\) fix the two incident edges. For the latter, this holds in \(T_{\mathrm{ter}}\) by
Lemma~\ref{lem:automatic-stabilisers}\textup{(ii)}, and is unchanged on passing to the minimal subtree. Thus \(h_i\in G_{v_i}\) fixes both edges, so
condition~\ref{item:block-local-G} gives \(h_i=1\). Consequently \(g^{m_i}\in Z_{v_i}\). Taking a common further power gives a
non-trivial element of \(Z_{v_1}\cap Z_{v_2}\), contrary to Proposition~\ref{prop:block-tree}~\ref{part:centres}.
\end{proof}

\begin{proof}[Proof of Theorem~\ref{thm:block-reduction}]
	
Proposition~\ref{prop:block-tree} gives conclusion \textup{(i)},
conclusions \textup{(iii)--(vi)}, and the minimality and cocompactness
assertions in \textup{(ii)}. Four-acylindricity is
Lemma~\ref{lem:four-acyl}.	
\end{proof}


\subsection{ \( T_{\mathrm{blk}}\) is \( \Aut(G_{\phi}) \)-invariant}

This subsection is not needed for the main argument, but is of independent interest. First, a preparatory lemma. We note that this is really \cite[Lemma~1.2]{BelegradekMostow}, but see also \cite[Theorem~2.2]{KoberdaHyperbolic}.

\begin{lemma}
	\label{lem:split hyperbolic abelian}
	Let \(P\) be the fundamental group of a finite-volume hyperbolic \(3\)-manifold. Then \(P\) admits no non-trivial splitting over a
	virtually abelian subgroup.
\end{lemma}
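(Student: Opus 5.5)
Suppose, for a contradiction, that \(P\) acts on a simplicial tree \(T\) without global fixed point, minimally and without inversions, with virtually abelian edge stabilisers. Since \(P\) is torsion-free, each edge stabiliser is either trivial, infinite cyclic, or free abelian of rank two. The proof splits according to the type of the edge groups. The structure we use throughout is the following. \(P\) is torsion-free and hyperbolic relative to its cusp subgroups, which are \(\Z^2\), by \cite[Theorem~5.1]{Farb}. Its maximal abelian subgroups are either infinite cyclic and generated by loxodromic elements, or are cusp groups. The peripheral structure is almost malnormal \cite[Theorem~1.4]{Osin}.

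First, splittings over trivial or cyclic subgroups. A splitting over the trivial group would make \(P\) a non-trivial free product. By Kneser's conjecture and asphericity of the manifold, this contradicts the fact that a finite-volume hyperbolic \(3\)-manifold is irreducible; equivalently, \(P\) is one-ended, since its Bowditch boundary is a \(2\)-sphere. For a splitting over an infinite cyclic group \(C=\langle c\rangle\), I would argue as follows. If \(c\) is loxodromic, then its centraliser is cyclic. If \(c\) is parabolic, then it lies in a cusp \(\Z^2\). In either case I would run the standard JSJ/Bowditch-boundary argument. A cyclic splitting of a relatively hyperbolic group with boundary \(S^2\) would force a local cut point in the Bowditch boundary \cite{Bowditch}. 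No such cut point exists, because removing a point, or a pair of points (the fixed points of a loxodromic), leaves \(S^2\) connected. This is the route of \cite[Lemma~1.2]{BelegradekMostow}, and I would follow it.

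Second, splittings over \(\Z^2\). Any \(\Z^2\) subgroup \(A\leq P\) consists of parabolics and lies in a unique cusp subgroup \(Q\), with \([Q:A]<\infty\). Then \(Q\) is commensurable with the edge group, and in fact \(Q\) is elliptic in \(T\): a finite-index subgroup of \(Q\) fixes an edge, and torsion-freeness together with the fixed-point theory for \(\Z^2\) actions on trees shows that \(Q\) fixes a point. I expect this to be the main obstacle. The plan is to pass to the relative setting and conclude via Dahmani's criterion or via the Bowditch boundary. The edge groups of \(T\) are all peripheral, so by almost malnormality \(T\) is a splitting of \(P\) relative to its peripheral structure in which edge groups are parabolic. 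Collapsing \(T\), this splitting yields a decomposition of the Bowditch boundary \(S^2\) with a parabolic point \(\xi\) at which the boundary is disconnected into pieces indexed by the edges at \(\xi\). More concretely, \(\partial P\setminus\{\xi\}\) would have at least two \(Q\)-orbits of components. However, \(S^2\setminus\{\xi\}\) is connected, which gives the contradiction.

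Alternatively, and more elementarily, I would fall back on three-manifold topology. A splitting over \(\Z^2\), with each edge group contained in a cusp group, is realised by an incompressible torus. In the Dunwoody/Stallings sense, one chooses a resolving surface for the equivariant map from the universal cover. Such a torus is either boundary parallel, giving the trivial splitting after minimality, or essential, contradicting atoroidality of hyperbolic manifolds. I would present the boundary argument as the main line and cite \cite[Theorem~2.2]{KoberdaHyperbolic} for the version stated for virtually abelian edge groups.
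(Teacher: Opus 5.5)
There is a concrete gap in your opening case split. You claim that a torsion-free virtually abelian subgroup of \(P\) is trivial, \(\Z\) or \(\Z^2\). This holds only when the manifold is orientable. A non-orientable finite-volume hyperbolic 3-manifold can have Klein bottle cusps; the Gieseking manifold is the standard example. In that case the non-abelian, torsion-free, virtually \(\Z^2\) group \(\Z\rtimes\Z\) occurs as a cusp group and hence as a possible edge group, and it falls under none of your three cases. For the same reason your description of maximal abelian subgroups fails: a glide-reflection parabolic \(b\) in a Klein bottle cusp has centraliser \(\langle b\rangle\).

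This case actually arises in the paper. The lemma is applied to pseudo-Anosov vertex groups, which may be non-orientable mapping tori whose incident edge groups are Klein bottle cusp groups. The paper's proof removes the case in its first step:
\begin{enumerate}[(a)]
\item Pass to the orientation cover \(P_0\), which has index at most two.
\item Note that \(P_0\) still has no global fixed point. A vertex fixed by \(P_0\) would be unique, since two fixed vertices would place \(P_0\) inside a virtually abelian edge stabiliser. By normality that vertex would then be fixed by \(P\).
\end{enumerate}
In \(P_0\) the edge groups are trivial, \(\Z\) or \(\Z^2\), and your trichotomy becomes correct. Alternatively, you could absorb Klein bottle edge groups into your second case, since each is commensurable with a unique cusp group. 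As written, however, your proof does not cover them.

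Beyond this, the case analysis is unnecessary. \cite[Lemma~1.2]{BelegradekMostow}, which you already invoke for the cyclic case, excludes non-trivial splittings over any virtually nilpotent subgroup. It therefore handles the trivial, cyclic and rank-two cases simultaneously. The paper's whole proof is the orientation-cover reduction followed by that citation, so your ``main obstacle'' disappears.

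If you want self-contained boundary arguments instead, two points need repair.
\begin{enumerate}[(a)]
\item For a parabolic cyclic edge group \(\langle c\rangle\), the cusp group containing \(c\) need not be elliptic in \(T\); a \(\Z^2\) can act on a line with kernel \(\langle c\rangle\). Then \(T\) is not a splitting relative to the peripheral structure, and it induces no decomposition of the Bowditch boundary until you modify it.
\item The observation that removing one or two points leaves \(S^2\) connected rules out global cut points and cut pairs, not local cut points. For local cut points you need that \(S^2\setminus\{x\}\) is one-ended.
\end{enumerate}
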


\begin{proof}
	We may first pass to the orientation cover \(P_0\) of \( P\). This is a normal subgroup of index at most 2 and therefore the induced splitting of \( P_0\) is non-trivial. If \(P_0 \) were to fix a vertex in the tree, that vertex would be unique as \(P_0\) is not virtually abelian and hence must also be fixed by \(P\).  Hence we have a non-trivial splitting of \(P_0\) over a virtually abelian group. (Actually, the splitting will be over abelian groups, having converted Klein bottles to tori). 
	
%
	
	This contradicts \cite[Lemma~1.2]{BelegradekMostow}, which proves
	that the fundamental group of a complete finite-volume pinched
	negatively curved manifold of dimension greater than two admits no
	non-trivial splitting over a virtually nilpotent subgroup.
\end{proof}

\begin{proposition}[\(\Aut(G_{\phi}) \)-invariance of \( T_{\mathrm{blk}}\)]
	\label{prop:invariant block tree}
	
	Under the natural identification \( G_{\phi} \cong \Inn(G_{\phi})\),  the action of \( G_{\phi}\) on \( T_{\mathrm{blk}}\) extends to a simplicial action of \( \Aut(G_{\phi})\) on \( T_{\mathrm{blk}}\). 
\end{proposition}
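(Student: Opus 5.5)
The plan is to characterise \(T_{\mathrm{blk}}\) intrinsically in terms of the group \(G_\phi\), without reference to \(G\), \(\phi\) or the folding history. Invariance then follows from the standard Bass--Serre principle: a minimal tree is determined up to equivariant isometry by its vertex stabilisers (or by its elliptic subgroups), once those stabilisers form an \(\Aut\)-invariant family. Concretely, I would show that an automorphism \(\alpha\) of \(G_\phi\) permutes the family \(\mathcal V_{\mathrm{ne}}\) of non-elementary vertex stabilisers and also permutes the family \(\mathcal V_{\mathrm{el}}\) of elementary vertex stabilisers. The tree is then recovered as the bipartite incidence graph: an edge joins \(V\in\mathcal V_{\mathrm{ne}}\) and \(A\in\mathcal V_{\mathrm{el}}\) exactly when \(V\cap A\) has finite index in \(A\). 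This works because Theorem~\ref{thm:block-reduction}(iii) says that edge stabilisers have finite index in the elementary endpoint. Conversely, if \(V\cap A\) has finite index in \(A\) and the vertices are not adjacent, then a finite-index subgroup of \(A\) fixes a segment of length at least three. That is excluded in the same way as in Lemma~\ref{lem:four-acyl}, using Lemma~\ref{lem:pA-peripherals} and condition~\ref{item:block-local-G}, together with the fact that \(A\) is virtually \(\Z^2\) and therefore not virtually contained in a single central direction. Once both families are shown to be permuted, \(\alpha\) induces a bijection on vertices that preserves incidence, and equivariance with respect to \(\Inn(G_\phi)\) is automatic.

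The core step is to show that \(\mathcal V_{\mathrm{ne}}\) is \(\Aut\)-invariant. The approach is to show that each non-elementary vertex group \(V\) is elliptic in \(\alpha^{-1}(T_{\mathrm{blk}})\), that is, that \(\alpha(V)\) fixes a vertex of \(T_{\mathrm{blk}}\). This splits into two cases.

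\emph{Pseudo-Anosov vertices.} Here \(V\) is a finite-volume hyperbolic \(3\)-manifold group, and \(\alpha(V)\) acts on \(T_{\mathrm{blk}}\) with edge stabilisers that are virtually abelian (subgroups of virtually \(\Z^2\) groups). By Lemma~\ref{lem:split hyperbolic abelian} the induced splitting is trivial, so \(\alpha(V)\) fixes a point.

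\emph{Finite-outer vertices.} Here \(V\) has infinite cyclic centre \(Z_v\). I would first show that \(\alpha(z_v)\) is elliptic. If \(\alpha(z_v)\) were hyperbolic, its axis would be \(\alpha(V)\)-invariant, so \(\alpha(V)\) would act on a line. Its kernel would then be an edge group (virtually \(\Z^2\)), or \(\alpha(V)\) would be virtually \(\Z^2\). Both are impossible, since \(V/Z_v\) is non-elementary hyperbolic. So \(\alpha(z_v)\) is elliptic, and \(\alpha(V)\) preserves its fixed subtree, which by four-acylindricity has diameter at most \(4\). A group acting on a bounded tree fixes a point, so \(\alpha(V)\) is elliptic.

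Having established ellipticity, maximality gives the next step. Non-elementary stabilisers are maximal among elliptic subgroups that are not virtually \(\Z^2\). Applying the argument symmetrically with \(\alpha^{-1}\) then shows that \(\alpha(V)\) equals a non-elementary vertex stabiliser.

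For the elementary family, I would characterise elementary vertex groups as the maximal virtually-\(\Z^2\) subgroups that contain, with finite index, the intersection of two distinct non-elementary vertex groups. This characterisation is invariant once \(\mathcal V_{\mathrm{ne}}\) is. One point needs attention here: after the folding, an elementary vertex might have only one non-elementary neighbour up to the stabiliser. In that case I would instead use maximal virtually-\(\Z^2\) subgroups intersecting some \(V\in\mathcal V_{\mathrm{ne}}\) in a finite-index subgroup, and check that these subgroups are elliptic by the same axis argument: a virtually \(\Z^2\) group acting hyperbolically would have a finite-index subgroup fixing its axis pointwise, which contradicts acylindricity.

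The main obstacle I expect is this finite-outer ellipticity step, and specifically the possibility that \(\alpha(V)\) fixes a vertex only of the wrong type, namely an elementary vertex, or that it is properly contained in a larger stabiliser. To rule this out, I would use the terminality condition Proposition~\ref{prop:block-tree}\ref{part:centres}. If \(\alpha(V)\) were contained in some \(V'\) but \(\alpha(V)\neq V'\), then applying \(\alpha^{-1}\) would produce \(V\subsetneq\alpha^{-1}(V')\subseteq V''\) for some non-elementary stabiliser \(V''\). This would force two distinct vertices to have nested non-elementary stabilisers, and hence a non-elementary subgroup fixing an edge, contradicting the cyclicity of the \(G\)-edge stabilisers.
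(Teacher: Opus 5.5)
For the non-elementary vertices and for adjacency, your argument is essentially the paper's. The pseudo-Anosov case goes through Lemma~\ref{lem:split hyperbolic abelian}. The finite-outer case uses ellipticity of the image of the centre, whose fixed subtree is bounded and invariant by four-acylindricity. You then argue symmetrically with \(\alpha^{-1}\), and detect adjacency by \(V\cap A\) having finite index in \(A\). Your converse for adjacency works, although condition~\ref{item:block-local-G} alone suffices, since a finite-index subgroup of \(A\) meets \(G\) non-trivially. Two minor points. First, ruling out nested stabilisers does not use Proposition~\ref{prop:block-tree}\ref{part:centres}; it only needs that a subgroup which is not virtually \(\Z^2\) fixes exactly one vertex, since otherwise it fixes an edge. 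Second, your opening principle is false in general: elliptic subgroups determine only the deformation space, not the tree. This is harmless, because you actually rebuild the tree from stabilisers and incidence. That reconstruction also needs \(v\mapsto M_v\) to be injective, which follows from the same unique-fixed-vertex facts.

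The elementary step needs repair.

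\emph{The case you worry about never occurs.} \(T_{\mathrm{blk}}\) is minimal, so every elementary vertex \(w\) has two distinct neighbours \(u\neq v\), possibly in one \(M_w\)-orbit. Then \(M_u\neq M_v\), and \(M_u\cap M_v=M_{[u,w]}\cap M_{[w,v]}\) has finite index in \(M_w\).

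\emph{Your fallback characterisation is false.} Let \(x\) be finite-outer and let \(g\in G_x\) fix no edge at \(x\). Any maximal virtually \(\Z^2\) subgroup \(H\supseteq\langle g,z_x\rangle\cong\Z^2\) meets \(M_x\) in a finite-index subgroup. Yet \(H\) is not an elementary stabiliser, since \(g\) would then fix the segment from \(x\) to an elementary vertex. These are the non-peripheral vertical tori of a Seifert-type piece.

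\emph{Your main characterisation is asserted rather than proved.} Both of its directions rest on the key fact the paper isolates: a virtually \(\Z^2\) subgroup fixes at most one elementary vertex. This holds because such a subgroup meets \(G\) non-trivially, and condition~\ref{item:block-local-G} forbids a non-trivial element of \(G\) from fixing two edges at an intermediate non-elementary vertex. Given this, suppose \(K\) fixes the elementary vertex \(w\) and has finite index in a virtually \(\Z^2\) group \(H\). The normal core of \(K\) in \(H\) has an \(H\)-invariant fixed subtree whose only elementary vertex is \(w\), so \(H\) fixes \(w\). This shows that \(M_w\) is maximal. It also shows that a maximal virtually \(\Z^2\) subgroup containing \(M_u\cap M_v\) with finite index equals \(M_w\), where \(d(u,v)=2\), again by condition~\ref{item:block-local-G}.

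This core argument also makes your axis argument for virtually \(\Z^2\) overgroups unnecessary. That is just as well, since as stated it is wrong: it is the infinite kernel of the action on the axis, not a finite-index subgroup, that fixes the axis pointwise. The paper runs the same core argument directly on \(M_w\alpha\supseteq M_{u'}\cap M_{v'}\) and concludes by symmetry, bypassing maximality altogether.
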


%

\begin{proof}
	Throughout this proof we use the fact that a finitely generated
	group acting on a tree either admits a non-empty fixed subtree or
	a unique minimal invariant subtree, on which it acts cocompactly.
	Since we are using right actions, we will let automorphisms act on the right for consistency.

	Let \(\alpha\in\Aut(G_{\phi})\). We will show that any
	non-elementary vertex group of \(T_{\mathrm{blk}}\) is sent to
	another, preserving type. 
	
	\medspace
	
	First consider a pseudo-Anosov vertex group \(P\), and put
	\(Q=P\alpha\). Every edge stabiliser for the induced action of \(Q\)
	on \(T_{\mathrm{blk}}\) is virtually abelian. Hence, if \(Q\) were
	not elliptic, its minimal invariant subtree would give a non-trivial
	splitting of \(Q\) over a virtually abelian subgroup, contrary to
	Lemma~\ref{lem:split hyperbolic abelian}. Therefore \(Q\) fixes a point of
	\(T_{\mathrm{blk}}\). Since \(Q\) is non-elementary, this fixed point
	is unique.

	Next suppose that \(V\) is a finite-outer vertex group of
	\(T_{\mathrm{blk}}\) and put \(U=V\alpha\). Again let \(U\) act
	on \(T_{\mathrm{blk}}\). Recall that the centre of \(U\) is
	infinite cyclic, generated by \(z\). If \(z\) acts hyperbolically
	on \(T_{\mathrm{blk}}\), then its axis \(L\) is preserved by \(U\)
	by normality. The kernel of the map
	\[
	U\longrightarrow\operatorname{Isom}(L)
	\]
	would be infinite, as \(U\) is not virtually cyclic,
	contradicting acylindricity. Therefore \(z\) acts elliptically and
	its fixed subtree has finite diameter by acylindricity, is
	preserved by \(U\) by normality, and therefore \(U\) has a bounded
	orbit. This implies that \(U\) is elliptic, fixing a point in
	\(T_{\mathrm{blk}}\). As before, \(U\) must fix a unique point
	since it is non-elementary.
	
	This shows that every non-elementary vertex group is sent by
	\(\alpha\) to a unique non-elementary vertex group. Applying the
	same argument to \(\alpha^{-1}\) shows that \(\alpha\) permutes
	the non-elementary vertex groups.
	
	Next we deal with elementary vertices. We first observe that a
	virtually \(\Z^2\) subgroup can fix at most one elementary vertex.
	Indeed, if such a group fixed two distinct elementary vertices,
	then a non-trivial element of its intersection with \(G\) would
	fix two distinct edges incident at a non-elementary vertex,
	contrary to Theorem~\ref{thm:block-reduction} \textup{(vi)}.
	
	Now let \(w\) be an elementary vertex. By minimality of
	\(T_{\mathrm{blk}}\), it has distinct non-elementary neighbours
	\(u,v\). Since the stabilisers of the two incident edges have
	finite index in \(M_w\), the group
	\[
	K=M_u\cap M_v
	\]
	has finite index in \(M_w\). By the preceding arguments, there
	are non-elementary vertices \(u',v'\) such that
	\[
	M_u\alpha=M_{u'}
	\qquad\text{and}\qquad
	M_v\alpha=M_{v'}.
	\]
	Thus
	\[
	K'=K\alpha=M_{u'}\cap M_{v'}
	\]
	is virtually \(\Z^2\) and has finite index in \(M_w\alpha\). If
	the distance between \(u'\) and \(v'\) were greater than \(2\),
	then \(K'\) would fix more than one elementary vertex,
	contradicting the argument above. Hence the distance between
	\(u'\) and \(v'\) is exactly \(2\), and there is a unique
	elementary vertex \(w'\) between them.
	
	Let \(N\) be the core of \(K'\) in \(M_w\alpha\). This is a
	finite-index normal subgroup of \(M_w\alpha\) which is virtually
	\(\Z^2\) and fixes \(w'\). In particular, this is the only
	elementary vertex it fixes. Since \(N\) is normal in
	\(M_w\alpha\), the subtree \(\operatorname{Fix}(N)\) is
	\(M_w\alpha\)-invariant. Moreover, \(w'\) is the unique
	elementary vertex in \(\operatorname{Fix}(N)\). Hence
	\(M_w\alpha\) fixes \(w'\), so
	\[
	M_w\alpha\leq M_{w'}.
	\]
	Since \(M_{w'}\alpha^{-1}\) is contained in a unique elementary
	vertex stabiliser and also contains \(M_w\), we deduce that
	\[
	M_w\alpha=M_{w'}.
	\]
	
	Therefore \(\alpha\) permutes the vertex stabilisers of
	\(T_{\mathrm{blk}}\). It remains to recover adjacency. If \(v\)
	is non-elementary and \(w\) is elementary, then we claim that
	\[
	d(v,w)=1
	\quad\Longleftrightarrow\quad
	M_v\cap M_w\text{ is virtually }\Z^2.
	\]
	One direction follows because the edge stabiliser has finite
	index in the elementary stabiliser \(M_w\). Conversely, if \(v\)
	and \(w\) are not adjacent, then the path between them contains
	two distinct edges incident at an internal non-elementary vertex.
	Since \(M_v\cap M_w\) being virtually \(\Z^2\) implies that
	\(G_v\cap G_w\) is infinite, this would contradict
	Theorem~\ref{thm:block-reduction} \textup{(vi)}. This proves the
	claim.
	
	For each vertex \(v\), let \(v f_\alpha\) be the unique vertex
	such that
	\[
	M_v\alpha=M_{v f_\alpha}.
	\]
	The discussion above shows that \(f_\alpha\) is a bijection on
	the vertices which preserves adjacency, and hence is an
	automorphism of \(T_{\mathrm{blk}}\).
	
	For \(\alpha,\beta\in\Aut(G_{\phi})\), we have
	
	\[
		M_{v f_\alpha f_\beta} =(M_{v f_\alpha})\beta =(M_v\alpha)\beta =M_v(\alpha\beta) =M_{v f_{\alpha\beta}}.
	\]
	
	By uniqueness, \(f_\alpha f_\beta=f_{\alpha\beta}\), while
	\(f_{\mathrm{id}}=\mathrm{id}\). Thus
	\(\alpha\mapsto f_\alpha\) defines a right action of
	\(\Aut(G_{\phi})\) on \(T_{\mathrm{blk}}\).
	
	Finally, for \(\gamma\in G_{\phi}\),
	\[
	M_{v f_{\operatorname{ad}(\gamma)}}
	=
	M_v\operatorname{ad}(\gamma)
	=
	M_v^\gamma
	=
	M_{v\cdot\gamma}.
	\]
	Hence, again by uniqueness,
	\[
	v f_{\operatorname{ad}(\gamma)}=v\cdot\gamma.
	\]
	Thus the action of \(\Inn(G_{\phi})\) agrees with the original
	action of \(G_{\phi}\).
\end{proof}

\begin{remark}
	By the usual arguments from \cite{CullerMorgan}, Proposition~\ref{prop:invariant block tree} is really equivalent to saying that \( \Aut(G_{\phi})\) preserves the translation length function of \( T_{\mathrm{blk}}\). 
\end{remark}

\section{Verification for the acylindrical splitting}\label{sec:local-verification}

The goal of this section is to verify the hypotheses of
Proposition~\ref{prop:abstract-preaux} for the graph of groups
\[
\cG_{\mathrm{blk}}=T_{\mathrm{blk}}/G_\phi.
\]

Theorem~\ref{thm:block-reduction}~\textup{(ii)} and
Remark~\ref{rem:acylindrical} already verify hypothesis
\textup{(iv)}, with \(k=4\). So we are left with hypotheses \textup{(i)--(iii)}. We proceed one vertex type at a time.


\subsection{Elementary vertices}

\begin{proposition}[Local problems at an elementary vertex]
\label{prop:elementary-package}
Let \(V\) be an elementary vertex group of \(\cG_{\mathrm{blk}}\), and let \(E,E'\leq V\) be incident edge-group images. Word and conjugacy
in \(V\), and membership in \(E\), are decidable. Given \(u,v\in V\), the sets
\(\mathcal C_E(u)\), \(\mathcal P_{E,E'}(V)\), and
\(\mathcal C_{E,E'}(u,v)\) are effective. The three displayed sets
are effectively rational.
\end{proposition}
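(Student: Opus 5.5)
Since \(V\) is virtually \(\Z^2\), the plan is to work entirely inside the effective rational calculus of Theorem~\ref{thm:rational-calculus}, with \(P=V\) (or \(V\times V\)). First fix the data of Remark~\ref{rem:fixed-input}: a normal finite-index free-abelian subgroup \(V_0\cong\Z^2\) of \(V\) (for instance the kernel of the action on \(G_a\cong\Z\) intersected with an appropriate subgroup), a finite transversal, and the action of the finite quotient on \(V_0\) by integral matrices. In these coset coordinates the multiplication in \(V\) is integral affine cell by cell, as in Theorem~\ref{thm:rational-calculus}\textup{(iii)}. This makes the word problem immediate: reduce a word to its normal form \((t,x)\) with \(x\in\Z^2\). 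By Theorem~\ref{thm:block-reduction}\textup{(iii)}, the incident edge images \(E,E'\) have finite index in \(V\). Being finitely generated subgroups, they are rational, and their coset-wise semilinear descriptions are computable from fixed generators. Membership in \(E\) then amounts to membership of a normal form in a semilinear set, which is decidable.

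For conjugacy, the conjugacy class of \(u\) is
\[
u^V=\{x^{-1}ux : x\in V\}.
\]
This is the image of the rational set \(V\) under a map that is integral affine on each of the finitely many cells, by Theorem~\ref{thm:rational-calculus}\textup{(iii)}. Hence \(u^V\) is effectively rational by part~\textup{(ii)}. Deciding \(u\sim_V v\) is then a membership test in this set, which is decidable via the semilinear description. The same observation gives
\[
\mathcal C_E(u)=E\cap u^V,
\]
which is effectively rational because intersection is available by~\textup{(ii)}.

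For the two-boundary relation, write \(\mathcal P_{E,E'}(V)\) as the projection to \(E\times E'\) of
\[
\{(x,e')\in V\times E' : (e')^{x^{-1}}\in E\}.
\]
More concretely, it is the image of \(V\times E'\) under \((x,e')\mapsto (x e' x^{-1},e')\), intersected with \(E\times E'\). This map is cellwise integral affine on the product \(V\times V\), so its image is rational, and the intersection is rational by~\textup{(ii)}. Finally, \(\mathcal C_{E,E'}(u,v)\) is the preimage of \(\{u\}\) under the map
\[
(e,e')\longmapsto eve',
\]
which is cellwise affine on \(E\times E'\leq V\times V\). Its preimage is therefore effectively rational by~\textup{(ii)}. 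Non-emptiness of all three sets is decided by~\textup{(iv)}.

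The main obstacle is bookkeeping rather than mathematics. The edge groups \(E,E'\) carry their own generating sets, coming from the graph of groups, and the sets must be rational in those coordinates. I would handle this by treating each edge inclusion \(E\to V\) as a fixed homomorphism of virtually abelian groups, as in Theorem~\ref{thm:rational-calculus}\textup{(iii)}. Its injectivity, together with the inverse-image closure in~\textup{(ii)}, transports every set computed in \(V\) back to \(E\) and \(E'\) in their own coordinates. The only other point needing care is that images under non-injective cellwise affine maps remain semilinear; this is exactly the integral-affine image clause of~\textup{(ii)}, so I expect no real difficulty there.
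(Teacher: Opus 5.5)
Your proposal is correct and follows essentially the same route as the paper: both reduce everything to the cell-wise affine/semilinear calculus of Theorem~\ref{thm:rational-calculus}, using the fixed coset coordinates of the virtually \(\Z^2\) group \(V\). The paper simply asserts that each set is the solution set of a finite system of affine equations and congruences on the coset cells. You spell this out by writing each set as an image, preimage or intersection under cell-wise affine maps, and by pulling back along the injective edge inclusions into edge coordinates; this is a more detailed rendering of the same argument.
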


\begin{proof}
All these sets become finite systems of affine equations, congruences, and finite-quotient conditions after splitting into the
coset cells of Theorem~\ref{thm:rational-calculus}. Their solution sets are effective semilinear sets, and part~\textup{(i)} of that
theorem converts them to the required rational descriptions.
\end{proof}

\subsection{Pseudo-Anosov vertices}

We next treat the hyperbolic three-manifold pieces. The argument is
essentially due to Pr\'eaux; we use relative hyperbolicity to obtain
the rational descriptions required by
Definition~\ref{def: local problems}.

\begin{proposition}[Local problems for hyperbolic three-manifolds]
\label{prop:effective-preaux}
Let \(V=\pi_1(S)\rtimes_\psi\Z\) be a pseudo-Anosov vertex group of \(\cG_{\mathrm{blk}}\), and let \(E,E'\leq V\) be incident edge
groups, with the fixed data of Remark~\ref{rem:fixed-input}. Word and
conjugacy in \(V\), and membership in every incident edge group, are decidable. Given words
\(u,v\in V\), one can construct automata, equivalently coset-wise
semilinear descriptions, for every set \(\mathcal C_E(u)\), every relation \(\mathcal P_{E,E'}(V)\), and every two-coset relation
\(\mathcal C_{E,E'}(u,v)\). These conclusions also hold when \(S\) or its mapping torus is non-orientable.
\end{proposition}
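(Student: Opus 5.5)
We use the peripheral structure from Lemma~\ref{lem:pA-peripherals}: \(V\) is torsion-free and hyperbolic relative to its full cusp groups. These are precisely the incident edge groups and their conjugates, each virtually \(\Z^2\) and pairwise trivially intersecting. Since \(V\) is finitely presented and the cusps are finitely generated virtually abelian, relative hyperbolicity gives the word problem (Farb's relative Dehn algorithm). Conjugacy reduces to conjugacy in the peripherals \cite[Theorem~1.1]{Bumagin}, and that is decidable in virtually abelian groups; alternatively, cite Pr\'eaux directly in the orientable case. Membership in an incident edge group \(E\) is decidable because \(E\) is relatively quasiconvex, indeed peripheral, with fixed generators from Remark~\ref{rem:fixed-input}. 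For the non-orientable case we run the same arguments, since Farb's theorem only needs a finite-volume hyperbolic structure, and the orientation cover is not needed except to cite Pr\'eaux.

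\emph{Boundary parallelism and \(\mathcal P_{E,E'}(V)\).} Almost malnormality (Lemma~\ref{lem:pA-peripherals}) controls these. Suppose \(1\neq e\in E\) and \(e^g\in E'\). Then \(E^g\cap E'\neq1\), so \(E^g=E'\) as peripheral subgroups. If \(E\ne E'\) are non-conjugate, \(\mathcal P_{E,E'}(V)=\{(1,1)\}\). If \(E'=E^{g_0}\) for a fixed \(g_0\), computed once as part of the fixed data, then \(g\in N_V(E)g_0=Eg_0\). So \(e\sim_V e'\) iff \(e'=(e^{k})^{g_0}\) for some \(k\in E\), i.e.\ \(e'\) lies in the image under a fixed map of the \(E\)-conjugacy class of \(e\). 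The relation is therefore the image under a fixed homomorphism of the conjugacy relation inside a virtually abelian group. That relation is semilinear on coset cells, because conjugation by a finite transversal composed with an affine translation part is cell-wise affine. So it is rational by Theorem~\ref{thm:rational-calculus}. For \(\mathcal C_E(u)\) we decide, via the conjugacy algorithm, whether \(u\) is conjugate into \(E\). If not, the set is empty (or \(\{1\}\) when \(u=1\)). If so, we find a witness \(e_0\in E\) by enumeration, and then \(\mathcal C_E(u)=\{e\in E: e\sim_V e_0\}\) is a slice of the rational relation \(\mathcal P_{E,E}(V)\), hence effectively rational.

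\emph{Two-coset relation: the main obstacle.} The set \(\mathcal C_{E,E'}(u,v)=\{(e,e'):u=eve'\}\) is in bijection, via \(e\mapsto e\), with \(E\cap uE'v^{-1}\); more precisely, with \(\{e\in E: e^{-1}u\in vE'\}\). If it has two solutions \((e_1,e_1'),(e_2,e_2')\), then \(e_2^{-1}e_1=v e_2'e_1'^{-1}v^{-1}\in E\cap (E')^{v^{-1}}\). So the set is a single coset of the subgroup \(D=E\cap vE'v^{-1}\), embedded diagonally. By malnormality, \(D\) is trivial unless \(E'=E^{v}\) as subgroups, i.e.\ \(v^{-1}Ev=E'\). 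In that case \(D=E\), which we can test by membership of \(v^{-1}g_0^{-1}\) in \(N_V(E)=E\), using the fixed \(g_0\). Hence the solution set is either empty, a single point, or \(\{(e,\,v^{-1}e^{-1}u)\}\) with \(e\) ranging over \(E\). The last case is the graph of a cell-wise affine map, so it is rational by Theorem~\ref{thm:rational-calculus}(iii). The delicate point is deciding non-emptiness and producing one solution. For this we use the relative hyperbolic double-coset membership: the double coset \(EvE'\) with peripheral \(E,E'\) is relatively quasiconvex, and membership of \(u\) in it is decidable. The cleanest route is the Kharlampovich--Miasnikov--Weil algorithm for double cosets of relatively quasiconvex subgroups \cite{KharlampovichMiasnikovWeil}. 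Once membership is known, we obtain a solution by enumeration. This is where we expect the most care, particularly for non-orientable \(S\), where Pr\'eaux's orientable geometric arguments do not transfer verbatim and the relative hyperbolic input replaces them.
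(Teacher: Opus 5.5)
Your proposal is correct and is essentially the paper's argument: almost malnormality and self-normalisation of the full cusp groups reduce \(\mathcal C_E(u)\) and \(\mathcal P_{E,E'}(V)\) to internal conjugacy in \(E\) transported by the fixed \(g_0\), and the two-coset set is a translate of a twisted diagonal over \(E\cap(E')^{v^{-1}}\), which is trivial or all of \(E\), with double-coset membership supplied by Kharlampovich--Miasnikov--Weil (using \(g_0\) to detect the full case, instead of the paper's call to their intersection algorithm, is a harmless variant). Two small repairs are needed: the pairwise conjugacy algorithm alone does not decide whether \(u\) is conjugate into the infinite subgroup \(E\), so you need a peripheral-conjugacy test such as \cite[Corollary~5.14]{Bumagin}, as the paper uses; and your membership test should read \(vg_0^{-1}\in E\) (equivalently \(E^v=E^{g_0}\)), not \(v^{-1}g_0^{-1}\in E\).
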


\begin{proof}
The analysis below is essentially due to Pr\'eaux. In the orientable case, Proposition~4.1 and Lemmas~4.2--4.3 of
\cite{PreauxOrientable} give the required finite and affine families, while Theorem~6.3 and Proposition~6.2 compute an initial
solution whenever the set is non-empty. The non-orientable word and conjugacy algorithms are supplied by \cite[\S\S2--4]{PreauxNonorientable}.
For completeness, we now give a relative-hyperbolic argument which also proves the stronger rational-output
statement in the non-orientable case.

The incident edge groups are the full cusp subgroups described in
Lemma~\ref{lem:pA-peripherals}. By \cite[Theorem~5.1]{Farb}, \(V\) is (strongly) hyperbolic relative to its
finite family of full cusp groups. Each cusp group is relatively
quasiconvex directly from \cite[Definition~1.8]{Osin}: any two of
its elements are joined by a single peripheral edge in the relative
Cayley graph. The peripheral family is almost malnormal by
\cite[Theorem~1.4]{Osin}. Since \(V\) is torsion-free,
distinct maximal peripheral subgroups have trivial intersection and
each maximal peripheral subgroup is self-normalising. Almost malnormality of
the peripheral structure shows that \(E\) has peripherally finite
index in the sense of \cite[\S7.3]{KharlampovichMiasnikovWeil}. These peripheral/cusp subgroups are virtually
\(\Z^2\), and hence geodesically biautomatic. 
Each incident edge group \(E\) belongs
to the peripheral structure of \(V\).

Theorem~7.5 and Corollary~7.9 of \cite{KharlampovichMiasnikovWeil}, followed by the algorithms of their
Section~6, therefore give effective membership, coset and double-coset languages, and intersections for these subgroups. The
peripheral-conjugacy test used below is supplied by \cite[Corollary~5.14]{Bumagin}. When it is positive, an initial pair
\(e_0\in E\), \(g_0\in V\) with \(u=e_0^{g_0}\) is found by enumeration using the word problem.

We now verify that the local problem sets of Definition~\ref{def: local problems} are rational. 

\medspace

If \(u \in V\) is not conjugate into \(E\), then \(\mathcal C_E(u)\) is empty. Otherwise choose \(e_0\in E\) and \(g_0\in V\) with \(u=e_0^{g_0}\). If
\(e^g=u\) for \(e\in E\), then the maximal peripherals \(E^g\) and \(E^{g_0}\) have non-trivial intersection and hence are equal.
Self-normalisation gives \(g=hg_0\) for some \(h\in E\), and therefore
\[
 \mathcal C_E(u)=\{e\in E\mid e\sim_Ee_0\}.
\]
For \(u=1\) the set is \(\{1\}\). Internal conjugacy in the virtually abelian group \(E\) is effectively rational by
Theorem~\ref{thm:rational-calculus}.

The same argument treats the two-boundary relation. The fixed cusp data determine whether \(E^{g_0}=E'\) for some \(g_0\), and provide
such a \(g_0\) when it exists. If it does not exist, then \(\mathcal P_{E,E'}(V)=\{(1,1)\}\). If it does, then
\[
 \mathcal P_{E,E'}(V)
 =\{(e,e')\in E\times E'\mid
     e'\sim_{E'}e^{g_0}\},
\]
is an affine image of the internal conjugacy relation in \(E'\), and
hence effectively rational.

Finally, Proposition~6.2 of \cite{KharlampovichMiasnikovWeil} decides whether \(u\in EvE'\).
If not, the two-coset relation is empty. If
\(u=e_0ve'_0\), put
\[
       I=E\cap(E')^{v^{-1}},
\]
which is effectively computed by Corollary~7.9 of that paper.

We claim that
\[
 \mathcal C_{E,E'}(u,v)
 =\left\{\left(e_0c,(c^{-1})^ve'_0\right)\ \middle|\ c\in I\right\}.
\]

Let \((e,e')\) be any solution and write \(e=e_0c\), where \(c\in E\). Comparing \(eve'=u=e_0ve'_0\) and cancelling \(e_0\)
gives
\[
cve'=ve'_0,
\qquad\text{hence}\qquad
c^ve'=e'_0.
\]
Thus
\[
e'=(c^{-1})^ve'_0.
\]
Since \(e',e'_0\in E'\), we have \(c^v\in E'\), so
\(c\in E\cap(E')^{v^{-1}}=I\). Conversely, every \(c\in I\)
gives a solution, since
\[
(e_0c)v\bigl((c^{-1})^ve'_0\bigr)=e_0ve'_0=u.
\]
Almost malnormality says that \(I\) is either trivial or a full maximal peripheral subgroup. In either case the displayed affine
image is effectively rational by Theorem~\ref{thm:rational-calculus}.

\end{proof}

\subsection{Finite-outer vertices}

It remains to treat the finite-outer vertices. For each such vertex
\(x\), put
\[
V=M_x,\qquad C=Z(V)=\langle z_x\rangle,\qquad
q:V\longrightarrow Q=V/C,
\]
where \(z_x\) is the positive generator chosen in Definition~\ref{def:Zv generator}. The group \(Q\) is hyperbolic by
Proposition~\ref{prop:finite-outer}.

The local arguments first solve the relevant problem in \(Q\) and then use the height homomorphism to eliminate the central error.

\subsubsection*{\normalfont\bfseries Fixed local data}

In accordance with Remark~\ref{rem:fixed-input}, the fixed-pair data
include a word for \(z_x\), a presentation and chosen automatic
structure for \(Q\), the required hyperbolicity constants, and the
images under \(q\) of the chosen generators of \(V\).
Lemma~\ref{lem:automatic-stabilisers}\textup{(ii)} gives \(C\leq E\)
for every incident edge group \(E\). Since \(E\) is virtually
\(\Z^2\), the quotient
\[
q(E)=E/C
\]
is virtually cyclic. For every such \(E\), we fix the edge monomorphism \(E\to V\), the coordinate form of \(q|_E\), a normal
cyclic subgroup of finite index in \(q(E)\), a finite transversal, and lifts in \(E\) of the transversal and cyclic generator. Hence
all changes between edge, quotient, height, and central coordinates are integral-affine on finitely many coordinate cosets.

Although \(G\) and \(G_\phi\) are torsion-free, the quotient \(Q\) need not be. We shall use the following elementary calculation to
recover coordinates for subgroups of the virtually cyclic groups \(q(E)\).

\begin{lemma}[Effective subgroups of virtually cyclic groups]
	\label{lem:VC-coordinates}
	Let \(U\) be a fixed virtually cyclic group with the coordinate data described above. From words generating a subgroup \(J\leq U\), one
	can compute \(J\) as an effectively rational subset of \(U\), and hence obtain its fixed coset-wise coordinates.
\end{lemma}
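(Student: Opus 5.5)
The plan is to work directly in the fixed coordinates of \(U\): a normal infinite cyclic subgroup \(U_0=\langle c\rangle\) of finite index \(n\), a finite transversal \(\{t_1,\dots,t_n\}\) with \(t_1=1\), and the multiplication table recording \(t_it_j=c^{a_{ij}}t_{k(i,j)}\) and \(t_i^{-1}ct_i=c^{\epsilon_i}\) with \(\epsilon_i=\pm1\). These data are part of the fixed local data. Each given generator word of \(J\) is first rewritten in normal form \(c^mt_i\), which is effective because \(U_0\) is normal and the table is finite. The set \(J\) will then be described as \(J_0\cdot\{s_1,\dots,s_r\}\). Here \(J_0=J\cap U_0=\langle c^N\rangle\) for some \(N\ge0\), and the \(s_j\) are coset representatives of \(J_0\) in \(J\), one for each coset cell \(U_0t_i\) meeting \(J\). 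Such a description is coset-wise semilinear, namely \(\{m_j+N\Z\}\) in the cell of \(t_{i_j}\), and hence effectively rational by Theorem~\ref{thm:rational-calculus}\textup{(i)}.

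To compute \(N\) and the representatives, run a Schreier-type saturation that mirrors Todd--Coxeter on the finite quotient \(U/U_0\). First compute the image \(\bar J\) of \(J\) in the finite group \(U/U_0\) by closing the images of the generators under multiplication. For each \(\bar t\in\bar J\), record one word in the generators mapping to it, giving a lift \(s_{\bar t}\in J\). By Schreier's lemma, \(J_0\) is generated by the finitely many elements \(s_{\bar t}\,g\,s_{\overline{tg}}^{-1}\), where \(g\) ranges over the generators of \(J\). Each of these lies in \(U_0\) and is computed as some \(c^{m}\). Then \(N\) is the gcd of these exponents, which is computable, with \(N=0\) when all of them vanish. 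Reducing each \(s_{\bar t}=c^{m}t_i\) modulo \(N\) gives the semilinear description in every cell. An automaton is obtained from this by the conversion in Theorem~\ref{thm:rational-calculus}\textup{(i)}, or directly as the language \((c^N)^*\cup(c^{-N})^*\) concatenated with the finitely many words \(s_j\).

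I expect the only point requiring care to be the verification that \(J=J_0\{s_{\bar t}\}\) exactly, so that no element of \(J\) is missed in a cell. This is the standard Schreier transversal argument: every \(x\in J\) satisfies \(x\,s_{\bar x}^{-1}\in J\cap U_0=J_0\). One must also make sure that the Schreier generators generate all of \(J_0\) and not just a subgroup of it, which is exactly the content of Schreier's lemma applied to the finite-index subgroup \(J_0=J\cap U_0\) of \(J\). Torsion in \(U\) causes no difficulty, since it is absorbed by the finite transversal. Coordinate changes back to \(E\), or to \(q(E)\), are integral-affine on cells and are handled by Theorem~\ref{thm:rational-calculus}\textup{(iii)}.
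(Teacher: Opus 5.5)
Your argument is correct, but it takes a more hands-on route than the paper. The paper's proof is short. Once the given generators \(w_1,\dots,w_r\) are written as words in the fixed generators of \(U\), the automaton with one loop for each \(w_i^{\pm1}\) accepts a language whose image is exactly \(J\). So \(J\) is effectively rational, and the paper obtains the coset-wise coordinates from the effective automaton-to-semilinear conversion in Theorem~\ref{thm:rational-calculus}\textup{(i)}. You instead carry out that conversion by hand in this special case. The image \(\bar J\) in \(U/U_0\), the Schreier generators of \(J\cap U_0\), and the gcd \(N\) give the explicit description \(J=\bigcup_j\{c^{m_j+kN}t_{i_j}\mid k\in\Z\}\). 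Your route avoids the Ciobanu--Evetts/Eilenberg--Sch\"utzenberger black box and exhibits \(J\cap U_0=\langle c^N\rangle\) and \(\bar J\) explicitly. The paper's route is shorter and uses only finite generation of \(J\) before the conversion.

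You should add one step that the paper states explicitly. In the application (Lemma~\ref{lem:VC-calculus}), the generators come out of the quasiconvex intersection algorithm as words over the generators of the ambient hyperbolic group \(Q\). Your rewriting via the multiplication table therefore does not yet apply. First find \(m\) and \(i\) with \(w=c^mt_i\) in \(Q\), by enumeration using the word problem in \(Q\); this terminates because \(w\) is known to lie in \(U\).

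Finally, your standing assumption that \(U_0\) is infinite cyclic excludes finite \(U\), which the paper's use of ``virtually cyclic'' allows. That case is trivial: take \(U_0=1\), so \(J\) is finite and equal to the closure you already compute.
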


\begin{proof}
	First rewrite the given generators as words in the fixed generators of \(U\). Since they are known to lie in \(U\), such expressions can
	be found by enumeration using the word problem. An automaton with loops labelled by these words and their inverses accepts precisely
	\(J\). Thus \(J\) is effectively rational, and Theorem~\ref{thm:rational-calculus}\textup{(i)} computes its
	coset-wise coordinate description.
\end{proof}

Thus, when a quasiconvex-subgroup algorithm below returns generators for an intersection, Lemma~\ref{lem:VC-coordinates} computes its
coordinate inclusions into the ambient virtually cyclic groups.

\subsubsection*{\normalfont\bfseries Effective virtually cyclic calculus}

The next lemma follows the same general strategy as Proposition~\ref{prop:effective-preaux}, with virtually cyclic
subgroups of a hyperbolic group replacing cusp subgroups of a relatively hyperbolic group. Since torsion and finite intersections
introduce somewhat different cases, we give the details.

\begin{lemma}[Effective virtually cyclic calculus]
	\label{lem:VC-calculus}
	Let \(Q\) be a hyperbolic group and let \(A,B\leq Q\) be virtually
	cyclic quasiconvex subgroups with the coordinate data described above.
	For \(x,y\in Q\), the three local solution sets
	\[
	\mathcal C_A(x)\subseteq A,\qquad
	\mathcal P_{A,B}(Q)\subseteq A\times B,\qquad
	\mathcal C_{A,B}(x,y)\subseteq A\times B
	\]
	are effectively rational in the fixed coordinates.
\end{lemma}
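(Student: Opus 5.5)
The plan is to mirror the proof of Proposition~\ref{prop:effective-preaux}. Cusp subgroups are replaced by the quasiconvex virtually cyclic subgroups \(A,B\). Almost malnormality is replaced by the standard finiteness statements for virtually cyclic subgroups of hyperbolic groups. The basic tools are the algorithms for quasiconvex subgroups of hyperbolic groups: membership, double-coset membership \(x\in AyB\), computation of intersections \(A\cap B^g\) (Kharlampovich--Miasnikov--Weil, or the classical Gitik--Mitra--Rips--Sageev bounds), and Bumagin's or Lysenok's algorithm deciding whether \(x\) is conjugate into \(A\) and producing a conjugator. Whenever an intersection or normaliser is returned as a finite generating set, Lemma~\ref{lem:VC-coordinates} converts it into fixed coordinates. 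Theorem~\ref{thm:rational-calculus} is then applied inside the virtually abelian groups \(A\), \(B\) and \(A\times B\).

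\emph{Two-coset relation.} This case goes exactly as before. Decide whether \(x\in AyB\). If not, the set is empty. If so, find \(x=a_0yb_0\) by enumeration and put \(I=A\cap B^{y^{-1}}\), computed effectively. The same cancellation as in Proposition~\ref{prop:effective-preaux} gives
\[
\mathcal C_{A,B}(x,y)=\{(a_0c,(c^{-1})^yb_0)\mid c\in I\}.
\]
No malnormality is needed: \(I\) is a subgroup of the virtually cyclic group \(A\), given by generators, so it is effectively rational by Lemma~\ref{lem:VC-coordinates}. The displayed set is its image under a map that is integral-affine on coset cells by Theorem~\ref{thm:rational-calculus}(iii).

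\emph{Boundary parallelism.} If \(x\) has finite order (possible since \(Q\) may have torsion), the set is a finite union of \(A\)-conjugacy classes. Up to conjugacy \(Q\) has finitely many finite-order elements, so finitely many candidates in \(A\) can be tested. If \(x\) has infinite order and \(x=a_0^{g_0}\), then every solution \(a^g=x\) makes \(A^g\) and \(A^{g_0}\) share an infinite-order element. Both then have finite index in the maximal elementary subgroup \(E(x)\). Hence \(g\in A\,g_0'\) for one of finitely many coset representatives \(g_0'\), and these are computable from the finite index \([E(a_0):A]\) and the fixed data. So
\[
\mathcal C_A(x)=\bigcup_j\{a\in A\mid a\sim_A a_j\}
\]
is a finite union, over an effectively computable list \(a_j\in A\cap x^{Q}\) (the \(a_j\) are \(x^{g^{-1}}\) for \(g\) ranging over representatives of \(E(a_0)\) modulo \(A\) that send \(x\) back into \(A\)), of internal conjugacy classes. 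Each class is rational by Theorem~\ref{thm:rational-calculus}.

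\emph{Two-boundary relation.} Treat the torsion part of \(A\times B\) as a finite case check. For the infinite-order part, use the finitely many double cosets \(AgB\) with \(A^g\cap B\) infinite; these are computable from the fixed peripheral data, since \(A^g\cap B\) infinite forces \(E(A)^g=E(B)\). On each such double coset the relation is \(b\sim_B a^{g}\) with \(a^g\in B\), which is an affine image of the internal conjugacy relation in \(B\) restricted to the computable subgroup \(A^g\cap B\).

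The main obstacle is the torsion and finite-intersection bookkeeping. Specifically, one must show that only finitely many double cosets (respectively coset representatives) contribute, and that they are effectively enumerable. For infinite-order elements this follows from \(E(x)\) being virtually cyclic with \(A\) of finite index in it. For torsion elements a separate argument is needed, using finiteness of conjugacy classes of finite subgroups in a hyperbolic group.
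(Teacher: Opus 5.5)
\paragraph{Comparison with the paper's proof.} Your treatment of \(\mathcal C_{A,B}(x,y)\) is exactly the paper's. Your treatment of \(\mathcal P_{A,B}(Q)\) has the same skeleton: torsion pairs are handled by testing finitely many pairs of internal conjugacy-class representatives for \(Q\)-conjugacy, and infinite-order pairs through the finitely many double cosets \(AgB\) with \(A^g\cap B\) infinite. The differences are in how these finiteness inputs are obtained.

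\begin{itemize}
\item The paper locates the double cosets with the Kharlampovich--Miasnikov--Weil bounded-packing algorithms (Propositions~6.7--6.9, Corollary~6.10). It then gets \(\mathcal C_A(x)\) for free by rerunning the construction with \(B=\langle x\rangle\) and slicing at \(A\times\{x\}\).
\item You use maximal elementary subgroups instead. Since \(A^g\cap B\) infinite forces \(E(A)^g=E(B)\), the relevant \(g\) form one coset \(E(A)g_*\), giving at most \([E(A):A]\) double cosets. You then compute \(\mathcal C_A(x)\) directly from a single conjugator \(g_0\).
\end{itemize}

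Your route is more explicit and dispenses with bounded packing. However, it needs extra fixed data: \(E(A)\), \(E(B)\), a conjugator between them, and coset representatives of \(A\) in \(E(A)\). It also needs a separate algorithm deciding conjugacy into \(A\) and returning a conjugator, which the paper's specialisation to \(\langle x\rangle\) avoids. Your step ``hence \(g\in Ag_0'\)'' silently uses self-normalisation: from \(E(A)^g=E(x)=E(A)^{g_0}\) you need \(gg_0^{-1}\in N_Q(E(A))=E(A)\). State this.

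\paragraph{A slip in the infinite-order part of \(\mathcal P_{A,B}(Q)\).} On a double coset \(AgB\) the conjugators are \(\alpha g\beta\). So the relation is not what you describe, namely \(\{(a,b)\mid a\in A\cap B^{g^{-1}},\ b\sim_B a^g\}\), the affine image of \(\Conj_B\) restricted to \(A^g\cap B\). It is the saturation of this set under \(A\)-conjugacy in the first coordinate:
\(\{(a,b)\mid \exists\, c\in A\cap B^{g^{-1}}:\ a\sim_A c,\ b\sim_B c^{g}\}\).
This is the paper's \(R_j\).

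Your version can lose pairs when \(A\cap B^{g^{-1}}\) is not normal in \(A\). For example, take \(Q=A=\langle t\rangle\times S_3\) and \(B=\langle t\sigma\rangle\) with \(\sigma\) a transposition. For a different transposition \(\sigma'\), the infinite-order pair \((t\sigma',t\sigma)\) lies in \(\mathcal P_{A,B}(Q)\) but not in your set. The repair is one more pullback of \(\Conj_A\) followed by a projection, via Theorem~\ref{thm:rational-calculus}.

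\paragraph{Torsion cases.} The only finiteness needed is that the torsion elements of the virtually cyclic groups \(A\) and \(B\) lie in finitely many internal conjugacy classes. Finiteness of conjugacy classes of finite subgroups of \(Q\) plays no role.
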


\begin{proof}
	We first record the algorithms being used. Propositions~6.1, 6.2, and 6.4 of \cite{KharlampovichMiasnikovWeil} give membership,
	containment, coset, double-coset, and intersection algorithms for quasiconvex subgroups of a hyperbolic group. Propositions~6.7--6.9
	and Corollary~6.10 give the corresponding bounded-representative algorithms, using bounded packing. These procedures terminate here
	because \(A\) and \(B\) are known to be quasiconvex. The word and conjugacy problems and a shortlex automatic structure for \(Q\) are
	effective; see \cite[Chapter~3]{Epstein}. We use these algorithms below to obtain the stronger, parametric descriptions required here.
	
	We shall also use effective internal conjugacy in \(A\) and \(B\). Let \(U\) denote either of these groups. If \(U\) is finite, its
	conjugacy relation can be computed by enumeration. Otherwise write
	\[
	U=\bigcup_{r\in R}r\langle a_U\rangle
	\]
	using the fixed finite transversal \(R\) and normal cyclic subgroup
	\(\langle a_U\rangle\). For fixed \(r,s,t\in R\), substitute
	\[
	u=ra_U^m,\qquad u'=sa_U^n,\qquad h=ta_U^k
	\]
	into the equation \(u^h=u'\). The finite multiplication table and the action of \(R\) on \(\langle a_U\rangle\) reduce this equation to
	integral-affine equations and congruence conditions in \(m,n,k\). Taking the finite union over \(r,s,t\), and then projecting away the
	coordinate \(k\), gives an effective rational description of
	\[
	\Conj_U=\{(u,u')\in U^2\mid u\sim_Uu'\}.
	\]
	This calculation includes the finite kernel and the elements mapping to reflections in the dihedral case; no infinite family is enumerated
	element by element.
	
	We first compute \(\mathcal P_{A,B}(Q)\). By the standard structure of virtually cyclic groups
	\cite[III.\(\Gamma\).3.10]{BridsonHaefliger}, the torsion elements of \(A\) and \(B\) lie in finitely many internal conjugacy classes.
	The preceding coordinate calculation computes representatives and effective rational descriptions of these classes. For each pair of
	representatives, use the conjugacy algorithm in \(Q\) to decide whether they are conjugate in \(Q\). The union of the corresponding
	products of internal conjugacy classes is exactly the torsion part of \(\mathcal P_{A,B}(Q)\), and is effectively rational.
	
	It remains to consider pairs of infinite-order elements. Propositions~6.7--6.9 and Corollary~6.10 of
	\cite{KharlampovichMiasnikovWeil} give a computable finite list \(g_1,\ldots,g_s\in Q\) representing every double coset
	\[
	AgB\qquad\text{for which}\qquad B\cap A^g
	\text{ is infinite}.
	\]
	Concretely, the cited bound reduces the search to a finite ball, and Proposition~6.4 computes the relevant intersections.

	For each remaining representative put
	\[
	I_j=A\cap B^{g_j^{-1}}.
	\]
	The subgroup \(I_j\) is infinite and virtually cyclic, and
	\[
	I_j^{g_j}=A^{g_j}\cap B.
	\]
	The intersection algorithm computes generators for \(I_j\), and hence for \(I_j^{g_j}\). Lemma~\ref{lem:VC-coordinates} computes their
	coordinates in \(A\) and \(B\), respectively, and shows that the graph
	\[
	\{(c,c^{g_j})\mid c\in I_j\}
	\subseteq A\times B
	\]
	is effectively rational. Define
	\[
	\begin{split}
		R_j=\{(a,b)\in A\times B\mid {}
		\exists \text{\(c\in I_j\) such that} \ 
		a\sim_Ac\ \text{and}\ b\sim_Bc^{g_j}\}.
	\end{split}          \tag{\(*\)}
	\]

Therefore \( R_j\) is the projection of the set, 
\[
\{ (a,c,c^{g^j},b) \ : \   a\sim_Ac\ \text{and}\ b\sim_Bc^{g_j}\ \},
\] 
to 	\(A\times B\). Since conjugacy (in \( A\) and \( B\)), conjugation by \( g_j\) intersection and projection are effectively rational, we conclude that \(R_j\) is effectively rational, by Theorem~\ref{thm:rational-calculus}.

	We verify that these are all the infinite-order pairs in \(\mathcal P_{A,B}(Q)\). Suppose that \(b=a^g\), where
	\(a\in A\) and \(b\in B\) have infinite order. Then
	\[
	\langle b\rangle\leq B\cap A^g,
	\]
	so \(g=\alpha g_j\beta\) for some \(\alpha\in A\), \(\beta\in B\), and some \(j\). Putting
	\(c=a^\alpha\), we have
	\[
	c\in I_j,\qquad
	b=(c^{g_j})^\beta.
	\]
	Hence \((a,b)\in R_j\). Conversely, the two internal conjugacies in \((*)\), together with conjugation by \(g_j\), show that every pair
	in \(R_j\) is conjugate in \(Q\). The torsion relation together with the finitely many relations \(R_j\) is therefore precisely \(\mathcal P_{A,B}(Q)\).

	To compute \(\mathcal C_A(x)\), apply the preceding construction with \(B=\langle x\rangle\). If \(x\) has finite order, its cyclic group
	and coordinate data are computed by enumeration, using the computable torsion bound for \(Q\). If \(x\) has infinite order, use \(x\) as
	the cyclic generator. In either case, \(\langle x\rangle\) is quasiconvex and the required quasiconvexity data are effectively
	computable from \(x\) using the fixed hyperbolic structure on \(Q\). The preceding construction therefore applies to this variable
	subgroup. Intersect \(\mathcal P_{A,\langle x\rangle}(Q)\) with \(A\times{x}\) and project to \(A\). The result is exactly \(\mathcal C_A(x)\).

	Finally, consider \(\mathcal C_{A,B}(x,y)\).
	Proposition~6.2 of \cite{KharlampovichMiasnikovWeil} decides whether \(x\in AyB\). If not, the relation is empty. Otherwise find
	\(a_0\in A\) and \(b_0\in B\) such that
	\[
	x=a_0yb_0,
	\]
	and put
	\[
	I=A\cap B^{y^{-1}}.
	\]
	Proposition~6.4 computes \(I\). A direct calculation gives
	\[
	\mathcal C_{A,B}(x,y)
	=\left\{\left(a_0c,(c^{-1})^yb_0\right)
	\ \middle|\ c\in I\right\}.
	\]
	Indeed, if \(ayb=a_0yb_0\), then
	\[
	c=a_0^{-1}a
	=(b_0b^{-1})^{y^{-1}}\in A\cap B^{y^{-1}},
	\]
	and the displayed formula follows; the converse is immediate.
	
	Lemma~\ref{lem:VC-coordinates} gives an effective coordinate description of \(I\). On each of its finitely many coordinate
	cosets, the map
	\[
	c\longmapsto
	\left(a_0c,(c^{-1})^yb_0\right)
	\]
	is integral affine. Its image is therefore effectively rational by Theorem~\ref{thm:rational-calculus}. This proves the result.
\end{proof}

\subsubsection*{\normalfont\bfseries Membership, conjugacy, and boundary relations}

\begin{lemma}\label{lem:rigid-membership}
Let \(x\) be a finite-outer vertex of \(T_{\mathrm{blk}}\) and put
\[
 V=M_x,\qquad z=z_x,\qquad C=\langle z\rangle,\qquad
 q:V\longrightarrow Q=V/C,
\]
and let \(E\leq V\) be an incident edge group. Membership in \(E\) is decidable.
\end{lemma}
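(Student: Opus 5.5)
The plan is to reduce membership in \(E\) to membership in the virtually cyclic subgroup \(q(E)\) of the hyperbolic quotient \(Q\), and then to settle the central ambiguity with the height homomorphism. The key input is Lemma~\ref{lem:automatic-stabilisers}\textup{(ii)}, which gives \(C\leq E\). Since \(C\leq E\), we have \(E=q^{-1}(q(E))\). Hence for \(u\in V\),
\[
u\in E\quad\Longleftrightarrow\quad q(u)\in q(E).
\]
No height correction is needed for membership itself, because the whole central fibre of \(q(u)\) lies in \(E\) as soon as one point of it does.

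The steps are as follows.

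\textbf{Step 1.} Given a word \(u\) in the generators of \(V\), compute the word \(q(u)\) in the generators of \(Q\), using the fixed images of the generators of \(V\) (Remark~\ref{rem:fixed-input}).

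\textbf{Step 2.} Observe that \(q(E)=E/C\) is virtually cyclic. Any virtually cyclic subgroup of a hyperbolic group is quasiconvex, and the fixed data record generators for it (the images of the chosen lifts of the transversal and cyclic generator).

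\textbf{Step 3.} Decide \(q(u)\in q(E)\) using the membership algorithm for quasiconvex subgroups of hyperbolic groups, \cite[Proposition~6.1]{KharlampovichMiasnikovWeil}. As in Lemma~\ref{lem:VC-calculus}, this procedure terminates because \(q(E)\) is known to be quasiconvex. The torsion in \(Q\) causes no difficulty here.

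Alternatively, one can bypass the general algorithm. Enumerate the finitely many transversal elements \(r\in R\) of \(q(E)\). For each, decide whether \(r^{-1}q(u)\) lies in \(\langle a\rangle\), where \(a\) is the fixed cyclic generator. For an infinite-order element \(a\) of a hyperbolic group, membership in \(\langle a\rangle\) is decidable, since quasi-geodesicity of \(n\mapsto a^n\) bounds \(|n|\) linearly in the word length.

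\textbf{Step 4.} When membership holds, recover an actual preimage in the edge coordinates. Find \(e\in E\) with \(q(e)=q(u)\) from the lifts. Then \(u=e z^k\), and \(k\) is determined by \(\chi(u)=\chi(e)+k\chi(z)\), as in Lemma~\ref{lem:height}. This is the coordinate form required by Proposition~\ref{prop:abstract-preaux}.

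The only point requiring care, and so the main obstacle, is the justification of \(C\leq E\) for the incident edge groups of \(T_{\mathrm{blk}}\), as opposed to \(T_{\mathrm{ter}}\). Lemma~\ref{lem:automatic-stabilisers} was proved for intermediate trees. Passing to the minimal subtree leaves stabilisers of the retained edges unchanged (Proposition~\ref{prop:block-tree}\textup{(b),(d)}), so the inclusion persists. The remaining ingredient is ensuring the quasiconvex membership algorithm's input data (the generators and quasiconvexity constant of \(q(E)\)) are part of the fixed local data.
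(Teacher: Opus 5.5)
Your proposal is correct and follows the paper's proof: the inclusion $C\leq E$ from Lemma~\ref{lem:automatic-stabilisers}\textup{(ii)} gives $E=q^{-1}(q(E))$, and then $q(u)\in q(E)$ is decided by quasiconvex membership in the hyperbolic quotient $Q$ via \cite[Proposition~6.1]{KharlampovichMiasnikovWeil}. Your Step~4 (recovering the preimage through the height) and the alternative direct test in $\langle a\rangle$ are valid but not needed for the lemma itself.
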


\begin{proof}
The subgroup \(q(E)\) is virtually cyclic, hence quasiconvex in the hyperbolic group \(Q\), and membership in it is decidable by
\cite[Proposition~6.1]{KharlampovichMiasnikovWeil}. Given \(v\in V\), decide whether \(q(v)\in q(E)\). If not, then
\(v\notin E\). If it does, choose \(e\in E\) with \(q(e)=q(v)\). Then \(ve^{-1}\in C\leq E\), so \(v\in E\).
\end{proof}

\begin{lemma}\label{lem:rigid-conjugacy}
Let \(x\) be a finite-outer vertex of \(T_{\mathrm{blk}}\) and put
\[
 V=M_x,\qquad z=z_x,\qquad C=\langle z\rangle,\qquad Q=V/C.
\]
Then the conjugacy problem in \(V\) is solvable.
\end{lemma}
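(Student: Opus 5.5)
The plan is to reduce directly to the height lemma, Lemma~\ref{lem:height}. Its hypotheses hold here: \(C=\langle z\rangle\) is infinite cyclic and central in \(V\), and we take \(\eta\) to be the restriction \(\chi|_V\). By Proposition~\ref{prop:finite-outer} (equivalently Definition~\ref{def:Zv generator}), \(\chi(z)>0\). Hence for \(u,v\in V\),
\[
u\sim_V v\iff \chi(u)=\chi(v)\ \text{and}\ q(u)\sim_Q q(v).
\]
So the algorithm, given two words in the fixed generators of \(V\), first computes their heights. This is immediate, since the fixed data of Remark~\ref{rem:fixed-input} record \(\chi\) on the generators of \(V\). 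If the heights differ, it answers ``not conjugate''.

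If the heights agree, the algorithm maps both words to \(Q\) using the recorded images \(q(\text{generators})\). It then runs the conjugacy algorithm for the hyperbolic group \(Q\) on the two images. By Proposition~\ref{prop:finite-outer}, \(Q\) is hyperbolic, and the fixed data include a presentation of \(Q\) and its hyperbolicity constant. The standard solution of the conjugacy problem in hyperbolic groups (e.g.\ \cite{BridsonHaefliger}, or via the automatic structure of \cite{Epstein}) then decides conjugacy in \(Q\), even when \(Q\) has torsion.

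By the displayed equivalence, the answer in \(Q\) is the answer in \(V\). If a conjugator is needed for later use, the algorithm lifts the \(Q\)-conjugator \(\bar h\) to some \(h\in V\). The proof of Lemma~\ref{lem:height} gives \(u^h=v\) exactly, with no central correction needed. A lift \(h\) can be found from the word for \(\bar h\) using the fixed preimages of the generators.

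The only point needing care is the effectiveness of the passage between \(V\) and \(Q\): a word for \(z\), the presentation of \(Q\), and the images of the generators of \(V\). All of these are part of the fixed local data, so I expect no real obstacle; the content is entirely Lemma~\ref{lem:height} plus hyperbolicity of \(Q\).
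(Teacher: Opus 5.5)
Your proposal is correct and is exactly the paper's argument: apply Lemma~\ref{lem:height} to the central subgroup \(C=\langle z\rangle\) with \(\eta=\chi|_V\), and use the hyperbolicity of \(Q=V/C\) from Proposition~\ref{prop:finite-outer} to solve conjugacy in the quotient. The effectiveness bookkeeping you spell out (computing heights, mapping words to \(Q\), lifting a conjugator with no central correction) is what the paper leaves implicit in its fixed local data.
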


\begin{proof}
Apply Lemma~\ref{lem:height} to the chosen central subgroup \(C=\langle z\rangle\) and the hyperbolic quotient \(Q=V/C\).
\end{proof}

\begin{proposition}\label{prop:rigid-boundary}
Let \(x\) be a finite-outer vertex of \(T_{\mathrm{blk}}\) and put
\[
 V=M_x,\qquad z=z_x,\qquad C=\langle z\rangle,\qquad
 q:V\longrightarrow Q=V/C,
\]
and let \(E\leq V\) be an incident edge group. For every \(u\in V\), the boundary-parallelism problem is solvable and
\(\mathcal C_E(u)\) is an effectively rational subset of \(E\).
\end{proposition}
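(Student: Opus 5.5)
The plan is to reduce to the quotient \(Q\) and then use the height lemma to remove the central ambiguity. By Lemma~\ref{lem:height}, for \(e\in E\) we have
\[
e\sim_V u\iff \chi(e)=\chi(u)\ \text{and}\ q(e)\sim_Q q(u).
\]
Hence
\[
\mathcal C_E(u)=\{e\in E\mid \chi(e)=\chi(u)\}\cap (q|_E)^{-1}\bigl(\mathcal C_{q(E)}(q(u))\bigr),
\]
and it suffices to show that each piece is effectively rational in \(E\).

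For the second piece, \(q(E)=E/C\) is virtually cyclic by Lemma~\ref{lem:automatic-stabilisers}\textup{(ii)}, and hence quasiconvex in the hyperbolic group \(Q\). Apply Lemma~\ref{lem:VC-calculus} with \(A=q(E)\) and \(x=q(u)\). This gives \(\mathcal C_{q(E)}(q(u))\) as an effectively rational subset of \(q(E)\) in the fixed coordinates. The map \(q|_E:E\to q(E)\) is a fixed homomorphism of virtually abelian groups, and its coordinate form is part of the fixed local data. By Theorem~\ref{thm:rational-calculus}\textup{(ii)--(iii)}, the preimage under it is therefore effectively rational in \(E\).

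For the first piece, \(\chi|_E:E\to\Z\) is a fixed homomorphism of virtually abelian groups, so it is integral affine on the coset cells. The set \(\chi|_E^{-1}(\chi(u))\) is then the inverse image of a point and is effectively rational by Theorem~\ref{thm:rational-calculus}. The integer \(\chi(u)\) is computed directly from the word \(u\). Intersecting the two pieces is allowed by Theorem~\ref{thm:rational-calculus}\textup{(ii)}, since \(E\) is virtually \(\Z^2\). Non-emptiness is decidable by part~\textup{(iv)}, and this solves the boundary-parallelism problem. If a witness is needed, enumerate conjugators and use the word problem.

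The main obstacle is checking that the quotient computation really applies in these coordinates. Two things need care. First, Lemma~\ref{lem:VC-calculus} must accept \(q(E)\) with its fixed virtually cyclic coordinates, including torsion in \(Q\), which the lemma already handles. Second, the map \(q|_E\) must be integral affine on finitely many cells; this is where the fixed lifts of the transversal and of the cyclic generator are used. I would also verify that the inverse image under \(q|_E\) of a rational subset of \(q(E)\) stays rational even though the fibres are whole cosets of \(C\). This holds because \(C\leq E\) and preimages under cell-wise affine maps are covered by Theorem~\ref{thm:rational-calculus}\textup{(ii)}.
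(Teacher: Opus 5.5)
Your proposal is correct and follows the paper's proof essentially step for step. The paper also writes \(\mathcal C_E(u)=(q|_E)^{-1}(R_u)\cap\chi^{-1}(\chi(u))\), takes \(R_u\) from Lemma~\ref{lem:VC-calculus}, justifies the equality by Lemma~\ref{lem:height}, and obtains effective rationality and decidable non-emptiness from Theorem~\ref{thm:rational-calculus}.
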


\begin{proof}
Let
\[
  R_u=\{\bar e\in q(E)\mid \bar e\sim_Qq(u)\}.
\]
This is effectively rational by Lemma~\ref{lem:VC-calculus}(a). The map \(q|_E:E\to q(E)\) is
integral affine on each of the fixed finite coset cells, and its kernel is \(C\). Hence its inverse image
of \(R_u\) is effectively rational. The height fibre
\[
 \chi^{-1}(\chi(u))\cap E
\]
is an integral-affine relation and is effectively rational. The height lemma gives the exact equality
\[
 \mathcal C_E(u)
 =(q|_E)^{-1}(R_u)\cap\chi^{-1}(\chi(u)).
\]
Indeed, the two conditions on the right say precisely that the images are conjugate in \(Q\) and the heights agree. Thus no pointwise
testing of an infinite family is required. Effective rationality and decidable non-emptiness follow from Theorem~\ref{thm:rational-calculus}.
\end{proof}

\begin{proposition}\label{prop:rigid-transition}
Let \(x\) be a finite-outer vertex of \(T_{\mathrm{blk}}\) and put
\[
 V=M_x,\qquad z=z_x,\qquad C=\langle z\rangle,\qquad
 q:V\longrightarrow Q=V/C,
\]
and let \(E,E'\leq V\) be incident edge groups. Then the two-boundary conjugacy relation \(\mathcal P_{E,E'}(V)\) is effectively rational.
\end{proposition}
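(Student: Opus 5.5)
The plan is to mirror Proposition~\ref{prop:rigid-boundary}, replacing the boundary set by the two-boundary relation. Put \(A=q(E)\) and \(B=q(E')\). By Lemma~\ref{lem:automatic-stabilisers}\textup{(ii)} both contain the image of \(C\), and both are virtually cyclic, hence quasiconvex, subgroups of the hyperbolic group \(Q\). Lemma~\ref{lem:VC-calculus} then gives an effectively rational description of
\[
\mathcal P_{A,B}(Q)=\{(\bar e,\bar e')\in A\times B\mid \bar e\sim_Q\bar e'\}.
\]

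Next I would pull this back along the product map
\[
q|_E\times q|_{E'}:E\times E'\longrightarrow A\times B.
\]
In the fixed coordinates this map is integral affine on finitely many coset cells. Theorem~\ref{thm:rational-calculus}\textup{(ii)--(iii)} therefore shows that its inverse image of \(\mathcal P_{A,B}(Q)\) is effectively rational in \(E\times E'\). The height-equality relation
\[
\{(e,e')\in E\times E'\mid \chi(e)=\chi(e')\}
\]
is cut out by an integral-affine equation on each cell of \(E\times E'\), since \(\chi\) restricted to each edge group is a fixed homomorphism to \(\Z\). It is therefore also effectively rational.

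The key identity is
\[
\mathcal P_{E,E'}(V)=(q|_E\times q|_{E'})^{-1}\bigl(\mathcal P_{A,B}(Q)\bigr)\cap\{\chi(e)=\chi(e')\}.
\]
This follows directly from the height lemma, Lemma~\ref{lem:height}, applied with \(\eta=\chi\) to the central subgroup \(C=\langle z_x\rangle\). Its hypotheses hold because \(\chi(z_x)\neq0\) by Proposition~\ref{prop:finite-outer}. By the lemma, \(e\sim_V e'\) holds exactly when the heights agree and \(q(e)\sim_Q q(e')\). The intersection of the two rational sets is then computed by Theorem~\ref{thm:rational-calculus}\textup{(ii)}; this step is legitimate because \(E\times E'\) is virtually abelian.

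The main point needing care is that Lemma~\ref{lem:VC-calculus} is stated in the coordinates of \(A\) and \(B\) as subgroups of \(Q\), and its output must be compatible with the fixed coordinate form of \(q|_E\) described under ``Fixed local data''. I would handle this by noting that the transversal and cyclic generator chosen for \(q(E)\) are images of fixed lifts in \(E\). Consequently \(q|_E\) is literally a coordinate projection that forgets the \(C\)-coordinate, and the pullback of a rational set amounts to adding a free integer coordinate on each cell.

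A second, minor point is the case \(E=E'\), or the case where \(E\) and \(E'\) are conjugate in \(V\). These require no separate treatment, because Lemma~\ref{lem:VC-calculus} does not assume the subgroups are distinct or non-conjugate.
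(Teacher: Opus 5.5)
Your proposal is correct and matches the paper's proof. Like the paper, you pull back the quotient relation $\mathcal P_{q(E),q(E')}(Q)$ from Lemma~\ref{lem:VC-calculus} along $q|_E\times q|_{E'}$, intersect with the height-equality subgroup $\{\chi(e)=\chi(e')\}$, and use Lemma~\ref{lem:height} for the exact equality, with effectiveness from Theorem~\ref{thm:rational-calculus}. One wording slip: Lemma~\ref{lem:automatic-stabilisers}(ii) gives $C\le E$ and $C\le E'$. It does not say that $q(E),q(E')$ contain the image of $C$, which is trivial in $Q$.
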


\begin{proof}
Let
\[
 R_{E,E'}=
 \{(\bar e,\bar e')\in q(E)\times q(E') \mid \bar e\sim_Q\bar e'\}.
\]
Lemma~\ref{lem:VC-calculus}(b) makes this relation effectively rational. Pull it back along
\[
 q|_E\times q|_{E'}:E\times E' \longrightarrow q(E)\times q(E')
\]
using the finite coset coordinates, and intersect it with the
integral-affine subgroup
\[
    \{(e,e')\in E\times E'\mid
        \chi(e)=\chi(e')\}.
\]
By Lemma~\ref{lem:height}, the resulting set is exactly
\[
       \mathcal P_{E,E'}(V).
\]
Theorem~\ref{thm:rational-calculus} gives its effective rationality.
\end{proof}


\begin{proposition}\label{prop:rigid-two-coset}
Let \(x\) be a finite-outer vertex of \(T_{\mathrm{blk}}\) and put
\[
 V=M_x,\qquad z=z_x,\qquad C=\langle z\rangle,\qquad
 q:V\longrightarrow Q=V/C,
\]
and let \(E_1,E_2\leq V\) be incident edge groups. For every \(u,v\in V\), the two-coset problem is solvable and
\(\mathcal C_{E_1,E_2}(u,v)\) is an effectively rational relation in \(E_1\times E_2\).
\end{proposition}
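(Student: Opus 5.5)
The plan is to follow the pattern of Propositions~\ref{prop:rigid-boundary} and~\ref{prop:rigid-transition}: solve the problem in the hyperbolic quotient \(Q\), pull the answer back through \(q\), and then use the height to remove the central ambiguity. Since \(C\leq E_1\cap E_2\) by Lemma~\ref{lem:automatic-stabilisers}\textup{(ii)}, the equation \(u=e_1ve_2\) in \(V\) projects to \(q(u)=q(e_1)q(v)q(e_2)\) in \(Q\). Conversely, suppose \(q(u)=q(e_1)q(v)q(e_2)\). Then \(u=e_1ve_2z^k\) for some \(k\). Because \(z\) is central and lies in \(E_2\), we may absorb \(z^k\) into \(e_2\). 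So the pull-back of the quotient relation consists exactly of pairs that become solutions after a central correction, and the correction is detected by the height.

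Concretely, put
\[
\bar R=\mathcal C_{q(E_1),q(E_2)}\bigl(q(u),q(v)\bigr)\subseteq q(E_1)\times q(E_2).
\]
This set is effectively rational by Lemma~\ref{lem:VC-calculus}, applied to the virtually cyclic quasiconvex subgroups \(q(E_1),q(E_2)\) with the fixed coordinate data. Pulling it back along \(q|_{E_1}\times q|_{E_2}\) is an integral-affine inverse image on coset cells, so it is effectively rational by Theorem~\ref{thm:rational-calculus}\textup{(ii)}, \textup{(iii)}. Then intersect with the integral-affine subgroup
\[
\{(e_1,e_2)\in E_1\times E_2\mid \chi(e_1)+\chi(e_2)=\chi(u)-\chi(v)\}.
\]
The claim to check is that this intersection equals \(\mathcal C_{E_1,E_2}(u,v)\). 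One inclusion is immediate. For the other, a pair in the intersection satisfies \(u=e_1ve_2z^k\). Applying \(\chi\) gives \(k\chi(z)=0\), so \(k=0\), by the same argument as in Lemma~\ref{lem:height}. Non-emptiness, and hence solvability of the two-coset problem, then follows from Theorem~\ref{thm:rational-calculus}\textup{(iv)}.

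The step I expect to need the most care is making sure the coordinate bookkeeping is genuinely integral affine. First, \(\chi\) restricted to \(E_i\) must be a homomorphism to \(\Z\), hence affine on the coset cells. Second, the inverse image under \(q|_{E_i}\), whose kernel is \(C\), must be taken in the fixed cell coordinates. Third, the relation \(\bar R\) from Lemma~\ref{lem:VC-calculus} must be expressed in the same coordinates on \(q(E_i)\) that are fixed in the local data. All three are supplied by the fixed local data and Theorem~\ref{thm:rational-calculus}. A minor further point: \(Q\) may have torsion, so \(q(E_i)\) may contain finite subgroups. Lemma~\ref{lem:VC-calculus} already handles this case, since its two-coset computation uses only the intersection \(A\cap B^{y^{-1}}\) and Lemma~\ref{lem:VC-coordinates}.
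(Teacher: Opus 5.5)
Your proposal is correct and follows the paper's proof essentially verbatim: you solve the two-coset problem in \(Q\) via Lemma~\ref{lem:VC-calculus}, pull the result back along \(q|_{E_1}\times q|_{E_2}\), and intersect with the height constraint \(\chi(e_1)+\chi(e_2)=\chi(u)-\chi(v)\), which forces the central error \(z^k\) to be trivial. One cosmetic point: when \(\chi(u)\neq\chi(v)\) that height condition defines a coset of a subgroup rather than a subgroup, which does not affect its effective rationality.
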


\begin{proof}
Project
\[
u=e_1ve_2,\qquad e_i\in E_i,
\]
to \(Q\). Lemma~\ref{lem:VC-calculus}(c) gives an effective rational relation for the quotient solutions in
\(q(E_1)\times q(E_2)\). Pull it back along
\[
 q|_{E_1}\times q|_{E_2}:E_1\times E_2 \longrightarrow q(E_1)\times q(E_2)
\]
using the finite coset coordinates. For a pair in this pullback,
there is a unique \(k\in\Z\) such that
\[
u=e_1ve_2z^k.
\]
Applying \( \chi\) to this equation gives the following, 
\[
 k\,\chi(z) =\chi(u)-\chi(e_1)-\chi(v)-\chi(e_2).  \tag{CE}
\]
Since \(\chi(z)\neq0\), our original equation holds if and only if
\[
\chi(e_1)+\chi(e_2) = \chi(u)-\chi(v).    \tag{H}
\]
On each finite cell, (H) is an integral-affine constraint on the edge groups. Therefore, intersecting this constraint with the pulled-back quotient
relation gives exactly our entire solution set. Theorem~\ref{thm:rational-calculus} makes this intersection effectively
rational. It is clear that every solution arises in this way. 
\end{proof}

\begin{remark}
Propositions~\ref{prop:rigid-boundary},
\ref{prop:rigid-transition}, and
\ref{prop:rigid-two-coset} are the precise analogue of Pr\'eaux's reduction of the Seifert case to the quotient by the regular fibre.
For a Seifert group the central quotient is Fuchsian; for a finite-outer vertex group it is hyperbolic by
Proposition~\ref{prop:finite-outer}. Lemma~\ref{lem:VC-calculus} supplies the
output required by the global rational-intersection argument.
\end{remark}

\subsubsection*{\normalfont\bfseries Verification of hypotheses and proof of Theorem~\ref{thm:main}}

\begin{corollary}[Verification of the abstract hypotheses]
\label{cor:local-verification}
The graph of groups \(\cG_{\mathrm{blk}}\) satisfies hypotheses
\textup{(i)--(iv)} of
Proposition~\ref{prop:abstract-preaux}.
\end{corollary}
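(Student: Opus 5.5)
The plan is to check the four hypotheses of Proposition~\ref{prop:abstract-preaux} one vertex type at a time. By Theorem~\ref{thm:block-reduction}(iv), every vertex of \(T_{\mathrm{blk}}\) is elementary, pseudo-Anosov or finite-outer, and the results of this section were organised along the same lines.

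First I will deal with the global conditions on \(\cG_{\mathrm{blk}}\). The quotient graph is finite because the \(G_\phi\)-action on \(T_{\mathrm{blk}}\) is cocompact, by Theorem~\ref{thm:block-reduction}(ii). The edge groups are virtually \(\Z^2\) by part~(iii). Their coordinates and edge monomorphisms belong to the fixed data of Remark~\ref{rem:fixed-input}. Hypothesis~(iv) holds with \(k=4\). This follows from four-acylindricity (Lemma~\ref{lem:four-acyl}), torsion-freeness of \(G_\phi\), and Remark~\ref{rem:acylindrical}.

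Next I will verify hypotheses (i)--(iii) at each vertex type.
\begin{enumerate}[(a)]
\item Elementary vertices are covered entirely by Proposition~\ref{prop:elementary-package}.
\item Pseudo-Anosov vertices are covered by Proposition~\ref{prop:effective-preaux}. It supplies membership, conjugacy, boundary parallelism, and effective rationality of \(\mathcal C_E(u)\), \(\mathcal P_{E,E'}(V)\) and \(\mathcal C_{E,E'}(u,v)\), including in the non-orientable case.
\item For finite-outer vertices, membership comes from Lemma~\ref{lem:rigid-membership} and conjugacy from Lemma~\ref{lem:rigid-conjugacy}. Effective rationality of the boundary sets, the two-boundary relations and the two-coset relations comes from Propositions~\ref{prop:rigid-boundary}, \ref{prop:rigid-transition} and \ref{prop:rigid-two-coset}.
\end{enumerate}

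The only point that needs care is a bookkeeping one. Hypotheses (ii) and (iii) refer to the relations \(\mathcal P_{E,E'}\) and \(\mathcal C_{E,E'}\) for all pairs of incident edge-group images, including \(E=E'\) and images arising from distinct edges in the same orbit. I also need these sets expressed in the edge-group coordinates of \(\cG_{\mathrm{blk}}\), not in vertex-group coordinates. I will handle this by transporting each output along the fixed edge monomorphisms, which are integral affine on coset cells. Theorem~\ref{thm:rational-calculus}(ii)--(iii) then preserves effective rationality.

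The local propositions were stated for arbitrary incident \(E,E'\), so the case \(E=E'\) needs no separate argument. Assembling these pieces gives (i)--(iv).
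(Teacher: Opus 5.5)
Your proposal is correct and follows the paper's proof. Like the paper, you take the three vertex types from Theorem~\ref{thm:block-reduction}, cite Propositions~\ref{prop:elementary-package} and~\ref{prop:effective-preaux} for the elementary and pseudo-Anosov vertices, cite Lemmas~\ref{lem:rigid-membership} and~\ref{lem:rigid-conjugacy} together with Propositions~\ref{prop:rigid-boundary}, \ref{prop:rigid-transition} and~\ref{prop:rigid-two-coset} for the finite-outer vertices, and get hypothesis~(iv) with \(k=4\) from four-acylindricity via Remark~\ref{rem:acylindrical}. Your extra bookkeeping (finiteness of the quotient graph, and transporting the local outputs into edge-group coordinates via Theorem~\ref{thm:rational-calculus}) is implicit in the paper and is accurate.
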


\begin{proof}
Theorem~\ref{thm:block-reduction} gives the three vertex types. Proposition~\ref{prop:elementary-package} treats the elementary
vertices, and Proposition~\ref{prop:effective-preaux} treats the pseudo-Anosov vertices. For a finite-outer vertex, membership
and conjugacy are supplied by Lemmas~\ref{lem:rigid-membership} and \ref{lem:rigid-conjugacy}, while
Propositions~\ref{prop:rigid-boundary}, \ref{prop:rigid-transition}, and \ref{prop:rigid-two-coset} give the required rational sets and
relations. Finally, Theorem~\ref{thm:block-reduction} gives four-acylindricity, so Remark~\ref{rem:acylindrical} gives hypothesis~\textup{(iv)}.
\end{proof}

\begin{proof}[Proof of Theorem~\ref{thm:main}]
If \([\phi]\) has finite order in \(\Out(G)\), apply Proposition~\ref{prop:finite-outer}. Suppose that \([\phi]\) has
infinite order. If the canonical JSJ tree is trivial, \(G\) is a closed surface group and \(G_\phi\) is a three-manifold group, so the
result follows from \cite{PreauxOrientable,PreauxNonorientable}.

Otherwise form the suspended canonical JSJ, refine each QH suspension as in Section~\ref{sec:refinement}, and perform the block
reduction of Section~\ref{sec:blocks}, obtaining the graph-of-groups
description
\[
 G_\phi\cong\pi_1(\cG_{\mathrm{blk}}).
\]

Corollary~\ref{cor:local-verification} verifies all the hypotheses of
Proposition~\ref{prop:abstract-preaux} for \(\cG_{\mathrm{blk}}\). The proposition therefore gives a
conjugacy algorithm for \(G_\phi\cong\pi_1(\cG_{\mathrm{blk}})\).

\end{proof}

\begin{remark}
	For the closed surface case we quoted \cite{PreauxOrientable, PreauxNonorientable}. However, even though \( T_{\mathrm{can}}\) is a point in this case, \( T_{\mathrm{ref}}\) will not be a point when the monodromy is reducible. In that case one can still obtain \( T_{\mathrm{blk}}\) and this will be the usual JSJ decomposition of the 3-manifold. 
\end{remark}

\end{document}